\setlist[itemize,2]{label=\textbullet} 
\newtheorem{example}{Example}[section]							
\newtheorem{theorem}{Theorem}[section]
\newtheorem{definition}{Definition}[section]
\newtheorem{proposition}{Proposition}[section]
\newtheorem{lemma}{Lemma}[section]
\newtheorem{remark}{Remark}[section]
\newtheorem{question}{Question}[section]
\newtheorem{problem}{Problem}[section]
\newtheorem{corollary}{Corollary}[section]
\def\a{\alpha}
\def\b{\beta}
\def\e{\varepsilon}
\def\loc{\text{loc}}
\def\F{\mathcal{F}}
\def\R{\mathbb R}
\def\P{{\mathcal P}}
\def\rd{\mathrm d}
\def\be{\begin{align}}
\def\ee{\end{align}}
\title{Construction of equilibrium states revisited}
\author{Changguang Dong}
\address{Chern Institute of Mathematics and LPMC, Nankai University, Tianjin 300071, China}
\email{dongchg@nankai.edu.cn}
\author{Qiujie Qiao}
\address{Chern Institute of Mathematics and LPMC, Nankai University, Tianjin 300071, China}
\email{qiujieqiao@mail.nankai.edu.cn}
\date{\today}
\subjclass[2020]{Primary 37D35; Secondary 37D25, 37D30}
\keywords{Unstable foliation, equilibrium state, exponential mixing, Katok map, Almost Anosov diffeomorphism.}
\begin{document}

\begin{abstract}
In \cite{PP-24}, Parmenter and Pollicott establish an abstract criterion that gives a geometric construction of 
equilibrium states for a class of partially hyperbolic systems. 
We refine their criterion to cover a much broader class of diffeomorphisms, which include certain diffeomorphisms with exponential mixing property (with respect to volume), Katok maps and ``almost Anosov'' diffeomorphisms.  As a special case, 
we obtain a  construction of equilibrium states for ergodic 
partially hyperbolic affine maps/flows on homogeneous spaces, without any restrictions on the orbit growth along center directions.
\end{abstract}

\maketitle


\section{Introduction}\label{introduction}

The study of equilibrium states attracts much attention in dynamical systems during past decades. The central topics in this area are the existence and uniqueness of equilibrium states, which are well understood only for a limited class of systems. 
The main purpose of this paper is to address the existence part for a certain class of partially hyperbolic diffeomorphisms. More precisely, we will give an explicit geometric construction of equilibrium states for these systems.


\subsection{Some history}

There are three main approaches to constructing equilibrium states: the first is based on Markov partitions, the second on the specification property, and the third on geometric methods. The first two belong to the framework of thermodynamic formalism, which borrows tools from statistical physics to describe the behavior of typical orbits.

\subsubsection{Markov partition}

 Markov partitions were introduced into dynamical systems by Adler-Weiss and Sinai in the late 1960s \cite{AW67,AW70,Sin68a,Sin68b}. 
A Markov partition decomposes the phase space into pieces such that the original dynamics can be encoded by a shift on symbolic sequences, thereby reducing the system to a simpler symbolic dynamics. 
Following this framework, 
 Bowen established the existence and uniqueness of equilibrium states for uniformly hyperbolic systems with H\"older continuous potentials \cite{Bow75a}. However, it becomes very delicate beyond uniform hyperbolicity, and until now there is not much progress in this direction.
 For  $C^{1+\alpha}$ surface diffeomorphisms with positive topological entropy, Sarig demonstrated that any such diffeomorphism had at most countably many ergodic invariant probability measures of maximal entropy \cite{Sa13}. 
 Analogous results held for higher-dimensional diffeomorphisms \cite{BO18} and for three-dimensional flows \cite{LS19}. 
 Recently, Buzzi, Crovisier, and Sarig showed that $C^\infty$ surface diffeomorphisms with positive topological entropy had finitely many ergodic measures of maximal entropy in general and exactly one in the topologically transitive case \cite{BCS22}.
For further information on symbolic dynamics for non-uniformly hyperbolic systems, see the survey \cite{Li21}.

\subsubsection{Specification property}

Specification was introduced by Bowen in \cite{Bow71}. 
Without using Markov partitions, 
Bowen provided an alternative proof of existence and uniqueness 
of equilibrium states for uniformly hyperbolic systems with  H\"older continuous potentials \cite{Bow75b}. 
This method relied on the specification and expansivity properties of the system, together with the Bowen property of potentials.  
This approach was later extended to flows by Franco \cite{Fr77}. 
More recently, Climenhaga and Thompson generalized these ideas to non-uniform settings, 
establishing existence and uniqueness of equilibrium states 
for homeomorphisms and flows under 
non-uniform versions of specification and expansivity \cite{CT-advance}. 
This framework was applied to various smooth systems with weak hyperbolicity, 
including Bonatti-Viana diffeomorphisms \cite{CFT18}, 
Ma\~n\'e diffeomorphisms \cite{CFT19}, 
geodesic flows on compact rank-one manifolds \cite{BCFT18}, 
and the Katok map \cite{Wa21}. 
For further recent developments in this direction, 
the reader is referred to the survey \cite{CT21}.
Subsequently, refinements of the Climenhaga-Thompson criterion were developed in \cite{PYY22, PYY25-2} 
and applied in  
\cite{PYY25} to establish results on equilibrium states for the Lorenz attractor. 

\subsubsection{Geometric construction}

Equilibrium states can also be constructed via geometric approaches in various partially hyperbolic settings. 
For partially hyperbolic diffeomorphisms with Lyapunov stability, Climenhaga, Pesin, and Zelerowicz \cite{CPZ-JMD} proved the existence and uniqueness of equilibrium states for continuous potentials 
satisfying both the u-Bowen and cs-Bowen properties. 
Their approach was based on pushing forward a reference measure on local unstable manifolds, which was constructed via a Carathéodory-type method. 
This result generalized earlier work of Spatzier and Visscher \cite{SV18} on isometric extensions of Anosov systems to a broader class of manifolds. Li and Wu \cite{LW23} further showed that when the system is topologically mixing, the unique equilibrium state has the Bernoulli property. 

Independently, Carrasco and Rodriguez Hertz developed another geometric method to establish existence and uniqueness of equilibrium states for certain H\"older continuous potentials on systems with center isometries, such as regular Anosov actions \cite{CR24}. 
Their approach provided refined control over transverse measures along invariant foliations. 
This additional structure played a key role in their subsequent work \cite{CR24-2}, which strengthened the Furstenberg-Marcus theorem by establishing unique ergodicity of the unstable foliation for mixing Anosov flows.


For $C^{1+\alpha}$ uniformly hyperbolic systems $f$, the equilibrium state corresponding to the geometric potential $\varphi=\log \left| \det(Df|E^u) \right|$ is precisely the unique SRB measure. 
A common construction of such measures relied on 
pushing forward the normalized volume supported on a piece of unstable manifold \cite{Sin68a, Ru76}. 
Pesin and Sinai later showed that, for partially hyperbolic attractors, 
any weak* limit of time averages of such pushforwards yields a Gibbs $u$-state \cite{PS82}. 
Recently, Parmenter and Pollicott introduced a modified version of this construction: by modifying the density at each step, they developed a new method for constructing equilibrium states in uniformly hyperbolic systems \cite{PP22-1}. 
Building on this idea, they established the existence of equilibrium states for all continuous potentials in partially hyperbolic systems that were Lyapunov stable or whose center-unstable bundle exhibited subexponential contraction \cite{PP22-2}. 
Subsequently, they extended their framework to Smale spaces and provided a new construction of equilibrium states for subshifts of finite type \cite{PP-24}.

\subsection{Motivations of our work}

In \cite{HHW17}, Hu, Hua and Wu introduced the notions of unstable metric entropy and unstable topological entropy defined on unstable manifolds, 
which characterized the structural complexity of orbits along local unstable manifolds. They also established a corresponding variational principle. 
Later, Hu, Wu, and Zhu extended this framework to 
unstable pressure for continuous potentials \cite{HWZ21} and introduced the notion of u-equilibrium states. 
By applying the upper semicontinuity property, 
they showed that a u-equilibrium state existed for any continuous potential. 
In subsequent work, Hu and Wu developed a notion of unstable entropy 
along invariant lamination $\mathcal{L}$ for general $C^{1+\alpha}$ ($\alpha > 0$) diffeomorphisms \cite{HW24}. 
Independently, Yang studied unstable metric entropy and 
introduced a new class of robustly transitive diffeomorphisms in \cite{Yang21}. 

Following these developments, 
we can analogously define the unstable metric entropy and the unstable topological pressure 
for general expanding foliations $\F$. 
Based on these notions, we introduce the notion of $\F$-equilibrium states; see Definition \ref{def,Fequilibriumstate}.
This naturally leads to the following question: 
\begin{question}\label{question 1}
Can one provide an explicit construction of $\F$-equilibrium states? 
\end{question}

Under an additional condition, 
by combining the theory of unstable entropy along expanding foliations with 
a geometric construction of equilibrium states, we establish the existence of $\F$-equilibrium states for general expanding foliations 
(Theorem \ref{existence F-es} and Corollary \ref{existence F-es-corollary}), thereby providing a partial answer to Question \ref{question 1}. 

The definition of unstable topological pressure involves taking the supremum over local unstable leaves. 
A natural question arises: 
\begin{question}\label{question 2}
Under what conditions is 
 the unstable topological pressure for an expanding foliation leaf-independent? 
 This means that for every local leaf $W^{\F}(x,\delta)$, 
 the unstable topological pressure yields the same value. 
\end{question}
In addition, we can ask the following question: 
\begin{question}\label{question 3}
Under what conditions does 
every local leaf $W^{\F}(x,\delta)$ admit the construction of an $\F$-equilibrium state? 
In these cases, what is the relation between $\F$-equilibrium states 
and classical equilibrium states? 
\end{question}
To address these issues, 
we develop an abstract criterion (Theorem \ref{F-pressure-2-corollary}) that provides an answer to both Question \ref{question 2} and Question \ref{question 3}. 
This criterion applies to a broad class of systems that admit 
an unstable or weakly unstable foliation, 
exhibit subexponential growth in transverse directions, 
and possess a strong form of mixing property; 
see the precise statements of conditions {\bf (C1)}-{\bf (C3)} in the next subsection.

\subsection{Main results}
Our main result demonstrates that, under these conditions, 
equilibrium states can be explicitly constructed by 
 pushing forward weighted volume measures supported on any local leaves.

For a diffeomorphism $f:M\to M$ on a closed Riemannian manifold $M$, 
we define the following conditions: 
 \begin{itemize}
\item[{\bf (C1)}]
 Either
\begin{itemize}
\item  $f$ is a $C^1$ partially hyperbolic diffeomorphism, 
where we consider the unstable foliation $W^u$ as the expanding foliation $\F$, 
and the center-stable distribution $E^{c s}=E^c\oplus E^s$ integrates to a foliaiton $W^{cs}$; or
\item $f$ is  a $C^{1+\alpha}$ diffeomorphism, where the unstable lamination $W^u$ integrates 
into an expanding foliation $\F$, 
and the stable lamination integrates into a foliation, which we also denote by $W^{cs}$. 
\end{itemize}

\item[{\bf (C2)}] 
For every $\epsilon>0$, 
there exists an increasing function $g^{\epsilon}:\mathbb{N}\to \mathbb{R}$ 
satisfying $$\lim\limits_{n \rightarrow \infty}n^{-1} \log g^\epsilon(n) = 0$$
such that 
for all $y\in M$ and $z \in W^{cs}_{\text{loc}}(y) $, we have 
\[
d^{cs}(y, z)\leq \epsilon g^{\epsilon}(n)^{-1} \Rightarrow  d^{cs}(f^{n}y, f^{n}z)\leq \epsilon. 
\]
\item[{\bf (C3)}]  
For each $x\in M$ and $\delta, \epsilon>0$, 
there exists a function $h_{x,\delta}^{\epsilon}: \mathbb{N} \to \mathbb{N}$ 
with $\lim\limits_{n\to\infty}h_{x,\delta}^{\epsilon}(n)n^{-1}=0$
such that for all $y\in M$, 
\begin{align*}
f^{h_{x,\delta}^{\epsilon}(n)}\left(\overline{W^{u}(x,\delta)}\right)\cap W^{cs}(y,\epsilon g^\epsilon(n)^{-1})\neq \varnothing, 
\end{align*}  
where $g^\epsilon$ is the function from condition {\bf (C2)}.
\end{itemize}

Our first result is the following abstract criterion, 
which is established under the conditions {\bf (C1)}-{\bf (C3)}. 

\begin{theorem}\label{F-pressure-2-corollary}
  Let $ M $ be a closed Riemannian manifold and 
  $ f: M \rightarrow M $ a diffeomorphism satisfying conditions {\bf (C1)}-{\bf (C3)}, 
and $\phi:M \to \mathbb{R}$ a continuous function. 
For any $x\in M$ and $\delta>0$, 
consider probability measures $ \left(\lambda_{n}\right)_{n=1}^{\infty} $ 
supported on $W^{u}(x,\delta)$ 
and absolutely continuous with respect to the induced volume $ \lambda_{W^{u}(x,\delta)} $ with densities
\begin{align*}
\frac{\rd \lambda_{n}}{\rd \lambda_{W^{u}(x,\delta)}}(y):=\frac{\exp \left(S_n(\phi-\Phi^{u})\left( y\right)\right)}{\int_{W^{u}(x,\delta)} \exp \left(S_n(\phi-\Phi^{u})\left(z\right)\right) \,\rd \lambda_{W^{u}(x,\delta)}(z)}, \quad \text { for } y \in W^{u}(x,\delta),
\end{align*}
where $\Phi^{u}(x):=-\log \left| \det\left( Df|E^{u}_x \right) \right|$. 
Define the time-averaged measures
\begin{align*}
\mu_{n}:=\frac{1}{n} \sum_{k=0}^{n-1} \left( f^k \right)_{*} \lambda_{n}, \quad n \geq 1. 
\end{align*}
Then each accumulation point of $\mu_{n}$ in the weak* topology is an equilibrium state for $\phi$. 
\end{theorem}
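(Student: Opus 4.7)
Let $\mu$ be any weak$^*$ accumulation point of $(\mu_n)$, say $\mu_{n_j}\to\mu$. First, $\mu$ is $f$-invariant since $\|f_*\mu_n-\mu_n\|\leq 2/n$ by a telescoping computation. The variational principle yields the upper bound $h_\mu(f)+\int\phi\,\rd\mu\leq P(\phi)$, so the crux is the matching lower bound, which I propose to route through the normalizing constant $Z_n:=\int_{W^u(x,\delta)}e^{S_n(\phi-\Phi^u)}\,\rd\lambda_{W^u(x,\delta)}$:
\[
\textup{(I)}\ \ h_\mu(f)+\int\phi\,\rd\mu\ \geq\ \liminf_{n\to\infty}\tfrac{1}{n}\log Z_n,\qquad \textup{(II)}\ \ \liminf_{n\to\infty}\tfrac{1}{n}\log Z_n\ \geq\ P(\phi).
\]
Step (I) is a leafwise Gibbs calculation in the spirit of Parmenter--Pollicott, while Step (II) is where conditions \textbf{(C2)} and \textbf{(C3)} enter decisively.

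\textbf{Step (I).} Fix a finite Borel partition $\mathcal{Q}$ of $M$ of diameter $\eta$ with $\mu(\partial\mathcal{Q})=0$ and set $\mathcal{Q}^N=\bigvee_{j=0}^{N-1}f^{-j}\mathcal{Q}$. Concavity of $-p\log p$ applied to $\mu_n=\tfrac{1}{n}\sum_{k=0}^{n-1}(f^k)_*\lambda_n$, combined with the standard Misiurewicz grouping in blocks of length $N$, gives
\[
\tfrac{1}{N}H_{\mu_n}(\mathcal{Q}^N)+\int\phi\,\rd\mu_n\ \geq\ \tfrac{1}{n}H_{\lambda_n}(\mathcal{Q}^{n+N-1})+\tfrac{1}{n}\int S_n\phi\,\rd\lambda_n-\tfrac{N}{n}\log|\mathcal{Q}|.
\]
Because $\lambda_n$ has density $\tfrac{1}{Z_n}e^{S_n(\phi-\Phi^u)}$ on the single leaf $W^u(x,\delta)$, and because leafwise bounded distortion gives $\lambda_{W^u}(P')\asymp \eta^{\dim E^u}e^{S_n\Phi^u(y_{P'})}$ for each component $P'$ of $\mathcal{Q}^n|_{W^u(x,\delta)}$, a direct computation yields
\[
H_{\lambda_n}(\mathcal{P}')+\int S_n\phi\,\rd\lambda_n\ =\ \log Z_n+\dim E^u\,\log(1/\eta)+O(1),
\]
where $\mathcal{P}'$ is the refinement of $\mathcal{Q}^n|_{W^u(x,\delta)}$ into connected components. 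Lumping $\mathcal{P}'$ back into $\mathcal{Q}^n$ loses at most the logarithm of the maximum number of plaques per atom, and condition \textbf{(C2)} bounds this number by $\bigl(g^{\eta}(n)\bigr)^{\dim E^{cs}}$, which is $o(n)$ after dividing by $n$. Letting $n\to\infty$ along $(n_j)$ (using $\mu(\partial\mathcal{Q}^N)=0$), then $N\to\infty$, then $\eta\to 0$, delivers (I).

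\textbf{Step (II).} Leafwise bounded distortion also gives $Z_n\asymp \eta^{\dim E^u}\sum_{\widetilde y\in E_n}e^{S_n\phi(\widetilde y)}$ for any maximal leafwise $(n,\eta)$-separated set $E_n\subset W^u(x,\delta)$. Fix $\varepsilon>0$ with $\eta<\varepsilon$ and pick an $(n,4\varepsilon)$-separated set $F\subset M$ realizing the topological-pressure sum $s_n(\phi,4\varepsilon)$ up to a factor $2$. By condition \textbf{(C3)} with shift $m:=h^{\varepsilon}_{x,\delta}(n)=o(n)$, each $y\in F$ admits $\widetilde y\in W^u(x,\delta)$ with $f^m\widetilde y\in W^{cs}(y,\varepsilon\,g^{\varepsilon}(n)^{-1})$. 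Condition \textbf{(C2)} then forces $d^{cs}(f^{m+j}\widetilde y,f^jy)\leq\varepsilon$ for all $0\leq j<n$; since the cs-foliation is $f$-invariant, the unstable coordinate of $f^jy$ already coincides with that of $f^{m+j}\widetilde y$, so $d(f^{m+j}\widetilde y,f^jy)\leq\varepsilon$. A triangle-inequality argument shows that $y\mapsto\widetilde y$ sends $F$ to a leafwise $(n+m,\eta)$-separated subset of $W^u(x,\delta)$, and continuity of $\phi$ yields $|S_n\phi(y)-S_n\phi(f^m\widetilde y)|\leq n\,\omega_\phi(\varepsilon)$ with $\omega_\phi$ the modulus of continuity of $\phi$. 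Hence
\[
Z_{n+m}\ \geq\ e^{-n\,\omega_\phi(\varepsilon)-O(m\|\phi\|_\infty)}\,\eta^{\dim E^u}\,\tfrac{1}{2}\,s_n(\phi,4\varepsilon).
\]
Dividing by $n+m=n(1+o(1))$, letting $n\to\infty$ then $\varepsilon\to 0$, completes (II).

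\textbf{Main obstacle.} The delicate technical work lies in Step (II), where the two independently subexponential quantities from \textbf{(C2)} and \textbf{(C3)} must cooperate: the transverse approach error $\varepsilon g^{\varepsilon}(n)^{-1}$ produced by (C3) has to be absorbed by the cs-distortion bound (C2) over the full time window of length $n$, and the shift $m=h^{\varepsilon}_{x,\delta}(n)=o(n)$ has to preserve $(n+m,\eta)$-separation on the single leaf $W^u(x,\delta)$. A related subsidiary difficulty in Step (I) is controlling the number of $W^u$-plaques inside a single $\mathcal{Q}^n$-atom, which again relies on the subexponential function $g^{\eta}(n)$ from \textbf{(C2)}. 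Once both subexponential error terms are identified as $o(n)$, the remaining estimates reduce to routine adaptations of the Parmenter--Pollicott leafwise Gibbs computation.
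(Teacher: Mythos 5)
Your two-step scheme (a Misiurewicz/Gibbs estimate giving $h_\mu(f)+\int\phi\,\rd\mu\geq\liminf\frac1n\log Z_n$, plus a shadowing argument via \textbf{(C2)}--\textbf{(C3)} giving $\liminf\frac1n\log Z_n\geq P_{\mathrm{top}}(f,\phi)$) is structurally the same as the paper's: they prove, respectively, Theorem \ref{existence F-es} ($P^{\F}_{\mathrm{top}}(f,\phi,\overline{W^u(x,\delta)})\leq h^{\F}_\mu(f)+\int\phi\,\rd\mu$ for accumulation points $\mu$) and Theorem \ref{F-pressure-2} ($P_{\mathrm{top}}(f,\phi)=P^{u}_{\mathrm{top}}(f,\phi,\overline{W^u(x,\delta)})=\lim\frac1n\log Z_n$), then assemble using $h^{\F}_\mu\leq h_\mu$ and the variational principle. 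The one genuine structural difference is that you work directly with the full entropy $h_\mu(f)$ and bypass the unstable entropy/unstable pressure framework (\S\ref{Unstable Entropy Along Expanding Foliations}--\S\ref{The construction of F-equilibrium states}, conditional Misiurewicz, Proposition \ref{muntomu}); this is a legitimate shortcut for the statement at hand, at the cost of losing the $\F$-equilibrium-state byproduct (Theorem \ref{existence F-es}, Corollary \ref{existence F-es-corollary}).

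Two places in Step (I) need repair. First, the ``leafwise bounded distortion'' claim $\lambda_{W^u}(P')\asymp\eta^{\dim E^u}e^{S_n\Phi^u(y_{P'})}$ with an $O(1)$ error is not available under \textbf{(C1)}: in the $C^1$ partially hyperbolic branch $\Phi^u$ is merely continuous, and even in the $C^{1+\alpha}$ non-uniformly hyperbolic branch the expansion degenerates (Katok map, almost Anosov), so the correct control is only $|S_n\Phi^u(z)-S_n\Phi^u(y_{P'})|\leq n\,\omega_{\Phi^u}(\eta)$ over a $\mathcal{Q}^n$-atom, i.e.\ an $O(n\,\omega_{\Phi^u}(\eta))$ error, not $O(1)$; you do send $\eta\to0$ at the end, so this is fixable, but the $\asymp$ is not literally true (and the matching lower bound on the volume is not needed anyway). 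The paper avoids the issue entirely by never comparing $\lambda_{W^u}(P')$ to a single value of $e^{S_n\Phi^u}$: it only uses the one-sided estimate $\lambda_{f^nW^u(x,\delta)}\big(f^n(A\cap C)\big)<1$ (Lemma \ref{Lemma-negative}), which follows from diameter control and makes no appeal to distortion of the unstable Jacobian. Second, your appeal to \textbf{(C2)} for the plaque count is a non sequitur: the number of connected components of a fixed-diameter $\mathcal{Q}^n$-atom intersected with the compact disk $W^u(x,\delta)$ is bounded by a constant depending only on $\delta$, $\eta$ and the geometry of the leaf, independent of $n$, so no subexponential function $g^\eta(n)$ is needed there. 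Step (II) is fine and essentially mirrors the paper's Theorem \ref{F-pressure-2}, differing only in that you use $(n,4\varepsilon)$-separated sets where the paper constructs an $(n,2\varepsilon)$-spanning set from the pushed-forward leaf; both are standard duals.
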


Since every equilibrium state is an $\F$-equilibrium state, 
the preceding theorem answers both Question \ref{question 2} and Question \ref{question 3} in this setting. 
Conditions {\bf (C1)}-{\bf (C3)} are satisfied for partially hyperbolic diffeomorphisms with Lyapunov stability.
A detailed comparison with related results will be given in a later subsection.

In the case of exponential mixing systems, we can refine the result as follows:

\begin{theorem}\label{exponential mixing lemma-theorem}
 Let $ M $ be a closed Riemannian manifold and 
  $ f: M \rightarrow M $ a diffeomorphism preserving a smooth measure $\mu$. 
  Suppose that 
  \begin{itemize}
\item $f$ satisfies conditions {\bf (C1)} and {\bf (C2)};
\item the continuous foliation $W^{cs}$  in {\bf (C1)} has $C^1$ leaves;
\item $f$ is exponential mixing with respect to the smooth measure $\mu$. 
  \end{itemize}
  Then the result of Theorem \ref{F-pressure-2-corollary} holds, namely, for any $x\in M$ and $\delta>0$, 
each accumulation point of the sequence $\mu_n$ is an equilibrium state for $\phi$.
\end{theorem}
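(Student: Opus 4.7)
The plan is to deduce Theorem \ref{exponential mixing lemma-theorem} from Theorem \ref{F-pressure-2-corollary} by showing that the additional hypotheses---exponential mixing of $f$ with respect to $\mu$ and $C^1$ regularity of $W^{cs}$-leaves---force condition \textbf{(C3)} to hold automatically. Given this, the conclusion is immediate from Theorem \ref{F-pressure-2-corollary}.

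Fix $x\in M$, $\delta,\epsilon>0$, and set $r_n:=\epsilon g^{\epsilon}(n)^{-1}$. The goal is to produce $h(n)=o(n)$ with $f^{h(n)}(\overline{W^{u}(x,\delta)})\cap W^{cs}(y,r_n)\neq\varnothing$ for every $y\in M$. The first step is a purely geometric reduction: using the local product structure of the two foliations, the $C^1$ regularity of $W^{cs}$-leaves (giving uniformly bounded holonomy slopes), and the representation of local unstable plaques as $C^1$ graphs over their $u$-projection, one finds (uniformly in $y$ by compactness) a constant $C>0$ and a box
\[
B_{y}^{*}:=\bigcup_{w\in W^{cs}(y,r_n/C)} W^{u}(w,r_n/C)
\]
such that any unstable leaf which meets $B_{y}^{*}$ and has $u$-diameter at least $r_n/C$ must intersect $W^{cs}(y,r_n)$. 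Since unstable expansion gives $f^{m}(\overline{W^{u}(x,\delta/2)})$ a $u$-diameter $\gtrsim \lambda^{m}$ (for a uniform $\lambda>1$) while $r_n$ is sub-exponential, this diameter hypothesis is automatic for the $h(n)$ produced below.

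The second step is to show, using exponential mixing, that $f^{h(n)}(\overline{W^{u}(x,\delta/2)})\cap B_{y}^{*}\neq\varnothing$ for some $h(n)=o(n)$. Thicken $\overline{W^{u}(x,\delta/2)}$ into the $cs$-tube $T_{\eta_n}:=\bigcup_{w\in\overline{W^{u}(x,\delta/2)}}W^{cs}(w,\eta_n)$ and let $\chi_1,\chi_2$ be smooth bump functions approximating $\mathbf{1}_{T_{\eta_n}}$ and $\mathbf{1}_{B_{y}^{*}}$ respectively. Exponential mixing gives
\[
\left|\int \chi_1\cdot(\chi_2\circ f^{m})\,\rd\mu-\int\chi_1\,\rd\mu\int\chi_2\,\rd\mu\right|\leq C_0\theta^{m}\|\chi_1\|_{*}\|\chi_2\|_{*}
\]
with $\theta\in(0,1)$, where the main term is $\gtrsim \eta_n^{\dim E^{cs}}r_n^{\dim M}$ while $\|\chi_1\|_{*}\|\chi_2\|_{*}\lesssim(\eta_n r_n)^{-1}$. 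Setting $\eta_n:=\epsilon''g^{\epsilon''}(m)^{-1}$ with $\epsilon''\asymp r_n$ and $m=h(n)\sim C_1\log(1/r_n)=o(n)$ makes the error term negligible, producing $z\in T_{\eta_n}$ with $f^{h(n)}(z)\in B_{y}^{*}$. Writing $z\in W^{cs}(w,\eta_n)$ with $w\in\overline{W^{u}(x,\delta/2)}$, condition \textbf{(C2)} yields $d^{cs}(f^{h(n)}(w),f^{h(n)}(z))\leq\epsilon''$, so $f^{h(n)}(w)$ lies in a box obtained from $B_{y}^{*}$ by a $cs$-enlargement of size $\epsilon''$. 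For $C$ chosen sufficiently large relative to the $C^1$-slope bound on unstable plaques, the geometric reduction still applies to this enlarged box and produces an intersection of $f^{h(n)}(\overline{W^{u}(x,\delta)})$ with $W^{cs}(y,r_n)$, verifying \textbf{(C3)}.

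The main obstacle is the simultaneous scale balancing. The thickness $\eta_n$ and drift allowance $\epsilon''$ must both decay sub-exponentially so that \textbf{(C2)} controls the $cs$-drift through $h(n)$ iterates, yet the bump-function norms, which blow up like $\eta_n^{-1}$ and $r_n^{-1}$, must not force $h(n)$ to grow faster than $o(n)$. That this trade-off is compatible with $h(n)=o(n)$ hinges on the joint sub-exponential growth of $g^{\epsilon}$ and $g^{\epsilon''}$ being (at least locally) uniform in the parameter, which is built into assumption \textbf{(C2)}.
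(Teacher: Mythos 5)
Your proposal follows the same overall strategy as the paper: reduce everything to verifying condition \textbf{(C3)} and then invoke Theorem \ref{F-pressure-2-corollary}; to verify \textbf{(C3)}, thicken both the unstable plaque and the center-stable ball, approximate their indicators by Lipschitz bump functions whose norms scale like the inverse thickness, and use exponential mixing to show the thickened sets intersect in positive measure once $h(n)\asymp\log g^{\epsilon}(n)=o(n)$. This is exactly the mechanism in the paper's Proposition~\ref{exponential mixing proposition}, which adapts the approximation scheme of Dolgopyat--Kanigowski--Rodriguez Hertz.

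There are, however, two places where your argument relies on hypotheses not actually available. First, your scale balancing sets $\eta_n=\epsilon''\,g^{\epsilon''}(h(n))^{-1}$ with $\epsilon''\asymp r_n\to 0$ as $n\to\infty$. But condition \textbf{(C2)} supplies a function $g^{\epsilon}$ for each \emph{fixed} $\epsilon>0$ with no uniformity in $\epsilon$: as $\epsilon''\downarrow 0$ the functions $g^{\epsilon''}$ may degenerate arbitrarily, so $\eta_n$ could shrink much faster than any subexponential rate and the factor $\|\chi_1\|_{\mathrm{Lip}}\sim\eta_n^{-1}$ would swamp the mixing gain $e^{-\eta_{\mathbf r}h(n)}$. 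Your closing remark that this uniformity is ``built into assumption \textbf{(C2)}'' is not correct. The paper sidesteps the problem entirely by fixing a single $\epsilon_0$ once and for all, fattening both sets by $\tfrac{\epsilon_0}{3}g^{\epsilon_0}(n)^{-1}$, and applying \textbf{(C2)} only at the fixed scale $\epsilon_0$; no $n$-dependent $\epsilon''$ ever appears. Second, your geometric reduction uses a uniform lower bound $u$-$\mathrm{diam}\,f^{m}\big(\overline{W^u(x,\delta/2)}\big)\gtrsim\lambda^m$ with $\lambda>1$. This holds for the partially hyperbolic case of \textbf{(C1)} but is not guaranteed for the $C^{1+\alpha}$ alternative of \textbf{(C1)} (where expansion along the unstable foliation may be nonuniform), so if you want to cover the full generality of the statement you need an argument that does not rely on uniform exponential $u$-expansion; the paper's proof of Proposition~\ref{exponential mixing proposition} does not use such a bound.
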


Examples of exponential mixing maps include Anosov diffeomorphisms \cite{Bow75a}, 
 partially hyperbolic automorphisms of nilmanifolds \cite{GS14}, 
 mostly contracting systems \cite{Do00}, and 
 partially hyperbolic translations on homogeneous spaces \cite{KM96}.

Next, we present two examples of nonuniformly hyperbolic systems that 
satisfy all the assumptions of Theorem \ref{F-pressure-2-corollary}.

The first example is the Katok map $G_{\mathbb{T}^2}$, a smooth nonuniformly hyperbolic diffeomorphism of the $2$-torus $\mathbb{T}^2$. This map was originally constructed by Katok in \cite{Ka79} by locally perturbing a linear Anosov map near the origin to introduce nonuniform hyperbolicity. In the same work, Katok also proved that smooth systems with nonzero Lyapunov exponents exist on arbitrary compact surfaces without topological obstructions. Subsequently, Dolgopyat and Pesin \cite{DP02} extended this result to all compact manifolds of dimension $\geq 3$.

\begin{theorem}\label{Katok-existence F-es}
Let $G_{\mathbb{T}^2}$ be the Katok map on $\mathbb{T}^2$, and $\phi:\mathbb{T}^2 \to \mathbb{R}$ a continuous function.
Then Theorem \ref{F-pressure-2-corollary} applies with $f = G_{\mathbb{T}^2}$ and
$\Phi^{u}(x):=-\log \left| \det\left( DG_{\mathbb{T}^2}|E^u_x \right) \right|$. 
  For any $x\in M$ and $\delta>0$, 
each accumulation point of the sequence $\mu_n$ is an equilibrium state for $\phi$.
\end{theorem}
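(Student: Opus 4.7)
The plan is to verify conditions \textbf{(C1)}--\textbf{(C3)} for $G_{\mathbb{T}^{2}}$ and then apply Theorem \ref{F-pressure-2-corollary}. Recall that Katok's construction takes a linear Anosov $A$ of $\mathbb{T}^{2}$ and modifies it on a small neighborhood $U$ of a fixed point $p$, replacing the linear action by the time-$1$ map of a vector field adapted to the stable/unstable axes of $A$ whose orbits are slowed down so that $p$ becomes a degenerate neutral fixed point while the stable/unstable directions are preserved.

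\emph{Verification of} \textbf{(C1)}. The map $G_{\mathbb{T}^{2}}$ is $C^{\infty}$ (hence $C^{1+\alpha}$), so I place it in the second alternative of \textbf{(C1)}. Since the Katok perturbation preserves the stable/unstable axes of $A$, the one-dimensional distributions $E^{u}$ and $E^{s}$ extend continuously across $p$ and coincide with those of $A$ outside $U$. Being continuous and one-dimensional on a surface, they integrate to global laminations whose leaves partition $\mathbb{T}^{2}$; I set $\F:=W^{u}$ and $W^{cs}:=W^{s}$.

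\emph{Verification of} \textbf{(C2)}. Outside $U$ the stable foliation contracts exponentially, while inside $U$ the slowing-down produces only polynomial contraction, of the form $d^{s}(f^{n}y,f^{n}z)\le C\,n^{-\beta}\,d^{s}(y,z)$ for some $\beta>0$ determined by the degeneracy of $G_{\mathbb{T}^{2}}$ at $p$. In either regime stable distances are non-expanding along orbits, so one may choose a polynomially growing function $g^{\epsilon}(n)$ so that $d^{s}(y,z)\le\epsilon g^{\epsilon}(n)^{-1}$ implies $d^{s}(f^{n}y,f^{n}z)\le\epsilon$. Polynomial growth gives $n^{-1}\log g^{\epsilon}(n)\to 0$, as required. (The argument splits the time interval $[0,n]$ into the excursions of the orbit through $U$ and its complement, and combines the exponential and polynomial estimates.)

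\emph{Verification of} \textbf{(C3)}, which is the hard part. Since $G_{\mathbb{T}^{2}}$ agrees with the mixing Anosov map $A$ outside $U$, any unstable disk $W^{u}(x,\delta)$ spreads across $\mathbb{T}^{2}$, and if there were no perturbation its iterate would be $\rho$-dense in time $O(\log(1/\rho))$. The core difficulty is controlling excursions of $f^{k}(W^{u}(x,\delta))$ through the degenerate region $U$, where unstable expansion is only polynomial. A Pliss-type selection of good return times, combined with the fact that a typical orbit spends only a subexponential fraction of iterates near $p$, yields a function $h^{\epsilon}_{x,\delta}(n)=o(n)$ such that $f^{h^{\epsilon}_{x,\delta}(n)}(\overline{W^{u}(x,\delta)})$ is $\epsilon g^{\epsilon}(n)^{-1}$-dense in the $W^{cs}$-transverse direction. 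Since $g^{\epsilon}$ is polynomial, $\log g^{\epsilon}(n)=o(n)$, so the required estimate $h^{\epsilon}_{x,\delta}(n)=o(n)$ is obtained. Once \textbf{(C1)}--\textbf{(C3)} are in place, Theorem \ref{F-pressure-2-corollary} immediately yields the conclusion.
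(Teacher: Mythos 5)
Your overall plan (verify \textbf{(C1)}--\textbf{(C3)} and invoke Theorem \ref{F-pressure-2-corollary}) is the same as the paper's, but the way you try to verify the conditions is substantially different and, for \textbf{(C2)} and \textbf{(C3)}, not fully worked out.

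The paper's verification leans entirely on the topological conjugacy $\phi$ between $G_{\mathbb{T}^2}$ and the linear Anosov map $T$ (Proposition \ref{Katok property}(1)--(2)). Since $\phi$ is a homeomorphism on a compact space, it has moduli of continuity $\theta, \eta$ in both directions, and because $T$ contracts stable segments at a \emph{uniform} exponential rate, one can find a single $N$ so that for $n\ge N$ any stable pair within $\epsilon$ is pushed below $\epsilon$, and then absorb the first $N$ iterates by uniform continuity; this gives a \emph{constant} function $g^\epsilon\equiv N_\epsilon$ for \textbf{(C2)}. Consequently, in \textbf{(C3)} the transverse radius $\epsilon\, g^\epsilon(n)^{-1}$ does not shrink with $n$, so a single fixed time $m$ at which $G^m(W^u(x,\delta/2))$ is $r(\epsilon N_\epsilon^{-1}/2)$-dense suffices (this density again comes from the conjugacy to the mixing map $T$), and the intersection is produced by the local product structure. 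The function $h^\epsilon_{x,\delta}$ is constant, trivially $o(n)$.

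Your route instead attacks the slowdown at $p$ directly. For \textbf{(C2)} you assert a uniform polynomial contraction estimate $d^s(f^n y,f^n z)\le C\,n^{-\beta}\,d^s(y,z)$; this is not established and is in fact stronger than needed (a uniform bound of this type across \emph{all} points is not obvious, since orbits close to $p$ can linger arbitrarily long). More seriously, for \textbf{(C3)} you appeal to a ``Pliss-type selection of good return times'' combined with a subexponential excursion count, but this is only sketched, and the delicate point --- whether a leaf that passes close to $p$ still becomes $\rho$-dense in time $O(\log(1/\rho))$ despite the non-uniform expansion --- is exactly where such an argument would have to do real work. The paper bypasses this entirely because it never needs a quantitative density rate: the density time is \emph{fixed} since $g^\epsilon$ is constant, and density at that fixed time is imported from $T$ via $\phi$. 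So while your approach is not obviously wrong, it contains genuine gaps in \textbf{(C2)} and \textbf{(C3)}, whereas the paper's conjugacy argument closes them cleanly. If you want to pursue your route, you would need to actually prove a uniform sub-exponential contraction bound for the stable holonomy and a uniform (in $n$) density-time estimate compatible with $g^\epsilon$, neither of which is immediate.
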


In particular, combining our results with works of Pesin–Senti–Zhang \cite{PSZ19} and Wang \cite{Wa21}, we obtain Corollary \ref{Katok-t1}: for the Katok map, any weak* limit of the time averages of pushforwards of normalized volume along unstable manifolds is a convex combination of the Dirac measure at the origin $\delta_0$ and the Lebesgue measure $m$ on $\mathbb{T}^2$.


Our second example comes from the class of ``almost Anosov'' diffeomorphisms,
introduced by Hu and Young in \cite{HY95}; see subsection \ref{Almost Anosov} for the definition.
Such diffeomorphisms are hyperbolic everywhere except at one point,
and they admit no SRB measure but an infinite SRB measure; see \cite{Hu00} for details.
Such phenomena are closely related to a key open problem in dynamics, 
namely Palis' Conjecture \cite{Pa05}, 
which predicts that for most dissipative diffeomorphisms 
there exist finitely many SRB measures whose basins 
cover a set of full Lebesgue measure.
This example illustrates how localized non-hyperbolicity
can obstruct the existence of SRB measures, thereby
contributing to our understanding of SRB measures in nonuniformly hyperbolic systems.

\begin{theorem}\label{Almost Anosov-existence F-es}
Let $f$ be an ``almost Anosov'' diffeomorphism on a compact manifold $M$, and $\phi: M \to \mathbb{R}$ a continuous function.
Then Theorem \ref{F-pressure-2-corollary} applies with 
$\Phi^{u}(x):=-\log \left| \det\left(D f|E^u_x \right) \right|$. 
  For any $x\in M$ and $\delta>0$, 
each accumulation point of the sequence $\mu_n$ is an equilibrium state for $\phi$.
\end{theorem}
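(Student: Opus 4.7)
The plan is to reduce the statement to an application of Theorem \ref{F-pressure-2-corollary} by verifying the three structural conditions {\bf (C1)}--{\bf (C3)} for the almost Anosov diffeomorphism $f$; once these are checked, the conclusion is immediate. I would first record the relevant structural facts about $f$ (from Hu--Young): $f$ is $C^{1+\alpha}$, topologically conjugate to a topologically mixing linear Anosov map on $M$, uniformly hyperbolic away from an arbitrarily small neighborhood of the distinguished neutral fixed point $p$, and admits globally defined continuous invariant splittings $E^u\oplus E^s$ whose integral manifolds are $C^{1+\alpha}$ leaves. The expansion $\|Df|E^u\|>1$ holds everywhere except at $p$, where it degenerates polynomially; symmetrically for the stable direction.

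For {\bf (C1)} (second alternative), I would invoke the above to identify $\F=W^u$ as an expanding foliation and take $W^{cs}=W^s$; the continuity and leaf regularity follow from the Hu--Young construction. For {\bf (C2)}, the key input is the normal form of $f$ near $p$ along $E^s$, which is of the form $z\mapsto z\bigl(1-c|z|^{\gamma}+o(|z|^\gamma)\bigr)$ for some constants $c,\gamma>0$. Standard iteration of this one-dimensional model (combined with uniform exponential contraction away from $p$) yields an estimate of the type
\begin{equation*}
d^{s}(f^{n}y,f^{n}z)\;\leq\;C\bigl(1+n\,d^{s}(y,z)^{\gamma}\bigr)^{-1/\gamma}d^{s}(y,z),
\end{equation*}
from which I would define $g^{\epsilon}(n):=C_{\epsilon}(1+n)^{1/\gamma}$ and check directly that $d^{s}(y,z)\le \epsilon g^{\epsilon}(n)^{-1}$ forces $d^{s}(f^n y,f^n z)\le\epsilon$. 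Since $g^{\epsilon}$ is polynomial in $n$, it is subexponential, so {\bf (C2)} holds.

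For {\bf (C3)} I would use topological mixing together with uniform expansion of $W^u$ outside a neighborhood of $p$: these together imply the quantitative spreading estimate that for any $\epsilon'>0$ and any local unstable disk $\overline{W^u(x,\delta)}$, there is $N\lesssim\log(1/\epsilon')$ with $f^{N}\bigl(\overline{W^u(x,\delta)}\bigr)$ meeting every stable ball of radius $\epsilon'$. Applying this with $\epsilon'=\epsilon g^{\epsilon}(n)^{-1}$ gives $h^{\epsilon}_{x,\delta}(n)\lesssim \log n$, which trivially satisfies $h^{\epsilon}_{x,\delta}(n)/n\to 0$. The main obstacle is precisely this last estimate: unstable disks whose forward orbit lingers near $p$ can spread only slowly, and one must check that they never stall, using that expansion along $E^u$, though non-uniform, forces escape from any fixed neighborhood of $p$ at a rate that dominates the polynomial slowdown measured by $g^{\epsilon}$. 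Once this is quantified, the three hypotheses of Theorem \ref{F-pressure-2-corollary} are all in place and the desired conclusion follows.
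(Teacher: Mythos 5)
Your verification of \textbf{(C1)} is fine, but the argument for \textbf{(C2)} is built on a model that does not match the almost Anosov class as defined in this paper. Following Hu--Young, the stable direction is \emph{uniformly} contracting: there is a single constant $\kappa^s<1$ with $|Df_x v|\leq\kappa^s|v|$ for all $x\in M$ and all $v\in E^s_x$. The degenerate (non-uniform) behavior occurs only in the unstable direction, where $\kappa^u(p)=1$. Your proposed normal form $z\mapsto z\bigl(1-c|z|^{\gamma}+o(|z|^\gamma)\bigr)$ along $E^s$ near $p$ describes a contraction that degenerates to the identity at $p$, which is not what happens here; the stable derivative at $p$ still has norm at most $\kappa^s<1$. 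Consequently there is no polynomial slowdown on stable leaves, and \textbf{(C2)} holds trivially with $g^{\epsilon}\equiv\textrm{const}$, as for a uniformly hyperbolic system.

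This also changes the nature of the \textbf{(C3)} verification. Since $g^{\epsilon}$ is constant, the radius $\epsilon g^{\epsilon}(n)^{-1}$ in \textbf{(C3)} does not shrink with $n$, so one can take $h^{\epsilon}_{x,\delta}$ to be an $n$-independent constant. The paper establishes exactly this in Lemma \ref{almost dense-2}: for any $x,\delta$ there is $N_0(x,\delta)$ such that for all $n\geq N_0$ and all $y\in M$, $f^n(W^u(x,\delta))$ meets $W^s(y,\delta)$, using density of $W^u(p)$ and $W^s(p)$ together with the local product structure and the uniform stable contraction to ``transfer'' density from $W^u(p)$ to $W^u(x)$. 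The quantitative escape-rate estimate for unstable disks lingering near $p$, which you correctly identify as the hard step in your approach, is therefore not needed at all: once the disk has spread densely enough it stays dense, and no further control of spreading speed is required because the target radius is fixed. You would still have to justify the claimed $h(n)\lesssim\log(1/\epsilon')$ even if $g^\epsilon$ were polynomial, and that would require a genuinely new argument about how fast unstable disks escape the neutral neighborhood of $p$; the paper sidesteps this entirely by keeping both $g^\epsilon$ and $h$ bounded.
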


\subsection{Comparison with Other Works}

We begin by recalling the notion of Lyapunov stability, introduced by F. Rodriguez Hertz, M. A. Rodriguez Hertz, and R. Ures in \cite{RRU07}. This notion is closely related to the condition of a topologically neutral center introduced by Bonatti and Zhang in \cite{BZ19}.

\begin{definition}
 A partially hyperbolic diffeomorphism $f$ has {\bf Lyapunov stability} 
 if for every $\epsilon>0$, there exists an $\epsilon_0>0$ such that 
 if $\gamma$ is a curve of length at most $\epsilon_0$, and if 
 $n \geq 0$ is such that $f^n \gamma$ is a cs-curve, then the length of 
 $f^n \gamma$ is at most $\epsilon$.
\end{definition}

Most previous works on geometric constructions of equilibrium states
focus on partially hyperbolic systems
where the center-stable direction exhibits Lyapunov stability \cite{CPZ-JMD, CR24, PP22-2}. 
For instance, by using geometric measure theory, Climenhaga, Pesin and Zelerowicz proved existence and uniqueness for certain potentials under the assumption of topological transitivity \cite{CPZ-JMD}. 
Later, Parmenter and Pollicott \cite{PP22-2} developed a more direct geometric construction, 
establishing the existence of equilibrium states for arbitrary continuous potentials. 
Additionally, for center isometries, based on the study of quasi-invariant measures along leaves of foliations, Carrasco and Rodriguez-Hertz established the existence and uniqueness of equilibrium states associated to some H\"older potentials.

Our work is motivated by the construction introduced by Parmenter and Pollicott \cite{PP22-2}. Our contribution has two innovations. 

Firstly, our criterion applies to both partially hyperbolic and nonuniformly hyperbolic systems. 
To our knowledge, applying geometric methods to construct equilibrium states in nonuniformly hyperbolic systems is new. 
We take the Katok map and almost Anosov diffeomorphisms as examples; for details, see Theorem \ref{Katok-existence F-es} and Theorem \ref{Almost Anosov-existence F-es}. 

Secondly, our criterion remains valid whenever the dynamics exhibit nontrivial growth in the transverse directions and satisfy a strong form of mixing, 
thus providing more flexibility. 

In particular, the systems considered in \cite{PP22-2} fit naturally into our framework as special cases, which are explained in detail as follows:
If a $C^{1+\alpha}$ partially hyperbolic system is topologically mixing and Lyapunov stable, 
then conditions {\bf (C1)}-{\bf (C3)} are satisfied: 
\begin{itemize}
\item Condition {\bf (C1)} follows directly from Theorem 2.4 in \cite{CPZ-JMD}.

\item Condition {\bf (C2)} is a direct consequence of Lyapunov stability: 
for any $\epsilon > 0$, one can choose $g^\epsilon \equiv \epsilon_0/\epsilon$, 
where $\epsilon_0$ is given by the definition of Lyapunov stability.

\item Condition {\bf (C3)} is proved in \cite[Lemma 3.7]{PP22-2} under the assumption of topological mixing. 
Furthermore, a weaker version of {\bf (C3)} holds under mere topological transitivity, 
as shown in \cite[Lemma 6.2]{CPZ-JMD}.
\end{itemize}
Moreover, Parmenter and Pollicott also
constructed equilibrium states by building conditional measures 
along the leaves of the $W^{cu}$ foliation \cite{PP22-2}, 
under the assumptions that $E^{cu}$ integrates to a foliation 
and that $f$ is uniformly contracting along $W^s$. 
Indeed, by applying our framework to the pair $(W^{cu}, W^s)$ instead of $(W^u, W^{cs})$, 
we also recover this result as a special case. 
In addition, our criterion remains valid even when 
$f$ exhibits expansion outside of $W^{cu}$, 
thereby extending the applicability to systems with a splitting of the form $TM = E^{cs} \oplus E^{cu}$.

We point out that 
while the method of pushing forward normalized volume along unstable manifolds 
has been successfully applied to uniformly hyperbolic systems \cite{Sin68a, Ru76} 
and partially hyperbolic systems \cite{PS82},
its extension to nonuniformly hyperbolic systems, 
even when the unstable lamination integrates to a continuous foliation $W^u$, 
remains unclear. 
In this paper, we address this gap by studying Katok maps and the ``almost Anosov'' diffeomorphisms introduced in \cite{HY95}. 
We prove that every limit of the time averages of pushforwards of volume along 
$W^u$ yields an equilibrium state for the geometric potential $-\log |\det(Df|E^u_x)|$. 
Furthermore, for the Katok map, any weak* limit of such time averages is a convex combination of the Dirac measure at the origin and the Lebesgue measure.  

\subsection{Further  problems}

For the Ma\~n\'e diffeomorphism \cite{Ma78} 
and the Bonatti-Viana example \cite{BV00}, 
our criterion does not directly apply 
due to the presence of exponential growth along the center-stable direction. 

\begin{problem}
Can one establish a modified criterion that is applicable to these diffeomorphisms? 
\end{problem}

 The Climenhaga-Thompson construction \cite{CT-advance} provides an approach that identifies a suitable collection of orbit segments, which we refer to
as the ``good'' set, whose topological pressure is strictly greater 
than that of the complement. 
When this good set satisfies appropriate specification and expansivity conditions, 
this method yields the existence and uniqueness of an equilibrium state for certain continuous potentials. 
This criterion applies to both the Ma\~n\'e diffeomorphism \cite{CFT19}
and the Bonatti-Viana example \cite{CFT18}. 
The Climenhaga-Thompson criterion 
may provide insights into how our criterion could be adapted to these more challenging cases. 

For the Katok map, as discussed earlier, fix $x\in\mathbb{T}^2$ and $\delta>0$. 
When pushing forward volume along the weak unstable leaf $W^u(x,\delta)$ under forward iteration, 
the resulting weak* limits are convex combinations of the Dirac measure $\delta_0$ and the Lebesgue measure $m$. 

\begin{problem}
Can one establish a criterion that 
determines how the coefficients in this convex combination 
depend on the parameters $x\in\mathbb{T}^2$ and $\delta>0$? 
In particular, does the Dirac measure $\delta_0$ necessarily appear in every such limit?
\end{problem}

\subsection*{Structure of this paper}
In \S\ref{preliminaries}, 
we review preliminary concepts from pressure theory, nonuniform hyperbolicity, 
and partial hyperbolicity.
In \S\ref{Unstable Entropy Along Expanding Foliations}, 
we introduce the notions and results related to unstable entropy along expanding foliations.
Section \ref{The construction of F-equilibrium states} is devoted to 
the construction of $\F$-equilibrium states 
for expanding foliations (see Theorem \ref{existence F-es}).
In Section \ref{Generating F-Equilibrium States from Each x}, 
we prove Theorem \ref{F-pressure-2-corollary}.  
The proofs of Theorems \ref{exponential mixing lemma-theorem}-\ref{Almost Anosov-existence F-es} are presented in \S \ref{Exponential Mixing Diffeomorphisms}-\ref{Almost Anosov} respectively.
Finally, in the Appendix, we include proofs of several auxiliary results.

\subsection*{Acknowledgement} This research was partially supported by National Key R\&D Program of China No. 2024YFA1015100. C. Dong was also grateful for the support by Nankai Zhide Foundation, ``the Fundamental Research Funds for the Central Universities" No. 100-63243066 and 100-63253093.
Q. Qiao was supported by Nankai Zhide Foundation.

\section{Preliminaries}\label{preliminaries}

\subsection{Entropy and pressure}

Let $(X,d)$ be a compact metric space, $f:X\to X$ a homeomorphism. 
We denote by $ \mathcal{M}(f,X)$ and $\mathcal{M}_e(f,X)\subset \mathcal{M}(f,X)$ 
the set of $f$-invariant and ergodic $f$-invariant Borel probability measures on $X$ respectively. 

For a partition $\xi$ of $M$, we denote by $\xi(x)$ the element of $\xi$ that 
contains $x$.
For two partitions $\xi$ and $\eta$, we write $\xi \geq \eta$ or $\eta\leq \xi$ 
to mean that  $\xi(x)\subset \eta(x)$ for all $x\in X$. 
A partition $\xi$ is called {\bf increasing} if $f^{-1}\xi \geq \xi$. 
For integers $m,n$ with $m\leq n$ and a partition $\eta$, we define the partition 
$\displaystyle \eta_{m}^{n}:=\bigvee_{i=m}^n f^{-i} \eta$. 

Fix a Borel probability measure $\mu$ on $M$. 
 A partition $\xi$ is called measurable if it is the limit of a refining sequence of finite partitions, 
 each of whose elements are measurable with respect to $\mu$. 
Furthermore, there exists conditional measures 
$ \left\{\mu_{x}^{\xi}\right\}_{x \in X} $ such that:
\begin{itemize}
\item[(1)] each $ \mu_{x}^{\xi}  $ is a probability measure on $ \xi(x) $;
\item[(2)] if $ \xi(x)=\xi(y) $, then $ \mu_{x}^{\xi}=\mu_{y}^{\xi} $;
\item[(3)] for every $ \psi \in L^{1}(X, \mu) $, 
$\int_{X} \psi \,\rd \mu=\int_{X} \int_{\xi(x)} \psi \,\rd \mu_{x}^{\xi} \,\rd \mu(x)$. 
\end{itemize}
Moreover, the conditional measures are unique in the following sense: 
if $ \left\{\mu_{x}^{\xi}\right\}_{x \in X} $ satisfies the above conditions, 
then $ \mu_{x}^{\xi}=\nu_{x}^{\xi} $ for $ \mu $-a.e. $ x $ (see \cite[Theorem 5.14]{EW11} for details).

We now describe some results on the entropy and pressure for probability measures. 

\begin{definition}
For a measurable partition $\xi$ and a probability measure $\mu$, we define 
the { information function} $I_\mu(\xi)$  
and the { entropy of the partition} $H_\mu(\xi)$ respectively by 
\begin{align*}
I_\mu(\xi)(x):=-\log \mu(\xi(x)), \quad H_\mu(\xi):=\int_X I_\mu(\xi)(x)\,\rd\mu(x).
\end{align*}
The conditional information function  $I_\mu(\xi|\eta)$ and 
the conditional entropy $H_\mu(\xi|\eta)$
with respect to a measurable partition $\eta$ are defined respectively by
\begin{align*}
I_\mu(\xi|\eta)(x):=-\log \mu_x^\eta(\xi(x)), \quad H_\mu(\xi|\eta):=\int_X I_\mu(\xi|\eta)(x)\,\rd\mu(x). 
\end{align*}
For each $\mu\in\mathcal{M}(f,M)$ and a measurable partition $\xi$, 
  the {\bf entropy} $h_{\mu}(f,\xi)$  
  and the {\bf conditional entropy} $h_\mu(f,\xi|\eta)$ with respect to a measurable partition $\eta$ are defined as 
  \[
  h_{\mu}(f,\xi):=\lim_{n \to \infty} \frac{1}{n} H_{\mu}\left( \xi_0^{n-1}\right), \quad   h_{\mu}(f,\xi|\eta):=\lim_{n \to \infty} \frac{1}{n} H_{\mu}\left( \xi_0^{n-1}|\eta\right). 
  \]
  The {\bf metric entropy} of $f$ is defined by 
  \[
  h_\mu(f):=\sup\{h_\mu(f,\xi) \mid \xi \text{ is a finite measurable partition with } H_\mu(\xi)<\infty \}. 
  \]
\end{definition}

Next, we recall some definitions and results about the topological pressure.

\begin{definition}\label{pressure}
Given $n\in \mathbb{N}$, $\epsilon>0$ and $x,y\in X$, we define $d_n$-metric and Bowen ball centered at $x$ of order $n$ and radius $\epsilon$ as 
\begin{align*}
  d_n(x,y)&:=\max_{0\leq i\leq n-1}d(f^ix,f^iy);\\
B(x,n,\epsilon)&:=\{y\in X: d_n(x,y)<\epsilon\}.
\end{align*}
A set $E\subset X$ is called {\bf($\boldsymbol{n},\boldsymbol{\epsilon}$)-spanning} if $X\subset \bigcup\limits_{x\in E}B(x,n,\epsilon)$. 
A set $E\subset X$ is called {\bf($\boldsymbol{n},\boldsymbol{\epsilon}$)-separated} if the $d_n$-distance 
between any two distinct points in $E$ is at least $\epsilon$.  
Given a continuous function $ \phi: X\to \mathbb{R}$, for any $x \in X $ and $n\in\mathbb{N}$, 
we define the statistical sum $\displaystyle S_{n} \phi(x):=\sum_{i=0}^{n-1} \phi\left(f^{i}(x)\right) $ and
  \begin{align*}
  N_{d}(\phi, n, \epsilon)&:=\sup \left\{\sum_{x \in E} e^{S_{n} \phi(x)} \mid E \subset X \text { is }(n, \epsilon) \text {-separated }\right\}; \\
  S_{d}(\phi, n, \epsilon)&:=\inf \left\{\sum_{x \in E} e^{S_{n} \phi(x)} \mid E \subset X \text { is }(n, \epsilon) \text {-spanning }\right\} .
  \end{align*}
  The {\bf topological pressure} of $ f$  with respect to  $\phi $ is 
  \[
    P_{\operatorname{top}}(f,\phi):=\lim _{\epsilon \rightarrow 0} \varlimsup_{n \rightarrow \infty} \frac{1}{n} \log S_{d}(f, \phi,n, \epsilon). 
    \]
In fact, when $\phi\equiv 0$, we replace $P_{\operatorname{top}}(f,\phi)$ with $h_{\operatorname{top}}(f)$ 
and refer to it as the {\bf topological entropy} of $f$. 
  \end{definition}

  Although the quantities $S_d(\phi,n,\epsilon)$ and $N_d(\phi,n,\epsilon)$ depend on the metric $d$, 
  the topological pressure is independent of the metric. 
The following variational principle establishes the relation between the metric entropy
and the topological pressure.

\begin{theorem}[\cite{PW} Variational Principle for pressure]\label{classical variational principle}
Let $ f: X \rightarrow X $ be a continuous map of a compact metric space 
$ X $, and $ \phi$ a continuous function. Then 
\begin{align*}
P_{\operatorname{top}}(f,\phi)&=\sup \left\{h_{\nu}(f)+\int \phi \, \rd \nu \mid \nu \in \mathcal{M}(f,X)\right\}\\
&=\sup \left\{h_{\nu}(f)+\int \phi \, \rd \nu \mid \nu \in \mathcal{M}_e(f,X)\right\}.
\end{align*}
\end{theorem}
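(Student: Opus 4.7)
The plan is to follow the classical Misiurewicz-style argument, proving the two inequalities of the variational principle separately, and then deducing the equality of the supremum over ergodic measures by the ergodic decomposition.

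For the easy direction $h_\nu(f) + \int \phi \, d\nu \leq P_{\operatorname{top}}(f,\phi)$ with $\nu \in \mathcal{M}(f,X)$, I would fix a finite Borel partition $\xi$ with $\operatorname{diam}(\xi) \leq \epsilon$ and consider its refinement $\xi_0^{n-1}$. Each atom $C$ of $\xi_0^{n-1}$ has $d_n$-diameter at most $\epsilon$, so picking one point $x_C \in C$ produces a set closely related to an $(n,\epsilon)$-separated set. Applying the elementary inequality $\sum_i p_i(a_i - \log p_i) \leq \log \sum_i e^{a_i}$ with $p_C := \nu(C)$ and $a_C := \sup_{x \in C} S_n\phi(x)$ yields
\[
H_\nu(\xi_0^{n-1}) + \int S_n \phi \, d\nu \;\leq\; \log N_d(\phi,n,\epsilon) + n\,\mathrm{var}(\phi,\epsilon),
\]
where $\mathrm{var}(\phi,\epsilon)$ is the modulus of continuity of $\phi$. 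Dividing by $n$, letting $n \to \infty$ and then $\epsilon \to 0$, and taking the supremum over $\xi$ gives $h_\nu(f) + \int \phi \, d\nu \leq P_{\operatorname{top}}(f,\phi)$.

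For the hard direction, I would fix $\epsilon > 0$ and, for each $n$, choose an $(n,\epsilon)$-separated set $E_n$ essentially attaining $N_d(\phi,n,\epsilon)$. Set
\[
\sigma_n := \frac{1}{Z_n}\sum_{x \in E_n} e^{S_n\phi(x)} \delta_x, \qquad Z_n := \sum_{x\in E_n} e^{S_n\phi(x)}, \qquad \mu_n := \frac{1}{n}\sum_{k=0}^{n-1} f^k_*\sigma_n,
\]
and let $\mu$ be any weak-$*$ accumulation point along a subsequence $n_j$; a standard argument shows $\mu \in \mathcal{M}(f,X)$. Choose a finite partition $\eta$ with $\operatorname{diam}(\eta) < \epsilon$ and $\mu(\partial \eta) = 0$. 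Since distinct points of $E_n$ lie in distinct atoms of $\eta_0^{n-1}$ by separation, a direct calculation gives
\[
\log Z_n \;=\; H_{\sigma_n}(\eta_0^{n-1}) + \int S_n \phi \, d\sigma_n.
\]
Applying the standard Misiurewicz block-entropy decomposition $\tfrac{1}{n}H_{\sigma_n}(\eta_0^{n-1}) \leq \tfrac{1}{q} H_{\mu_n}(\eta_0^{q-1}) + \tfrac{2q\log|\eta|}{n}$ for any fixed block length $q$, then passing to the weak-$*$ limit along $n_j$ (using $\mu(\partial\eta) = 0$ to ensure continuity of $\nu \mapsto H_\nu(\eta_0^{q-1})$) and finally letting $q \to \infty$ yields $P_{\operatorname{top}}(f,\phi) \leq h_\mu(f,\eta) + \int \phi \, d\mu \leq h_\mu(f) + \int \phi \, d\mu$.

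The equality with the supremum over ergodic measures follows from the ergodic decomposition: writing any $\nu \in \mathcal{M}(f,X)$ as $\nu = \int \nu_\omega \, dP(\omega)$ with $\nu_\omega$ ergodic, both $\nu \mapsto h_\nu(f)$ and $\nu \mapsto \int \phi \, d\nu$ are affine, so $h_\nu(f) + \int \phi \, d\nu = \int (h_{\nu_\omega}(f) + \int \phi \, d\nu_\omega)\, dP(\omega)$, and the two suprema coincide. I expect the main obstacle to be the Misiurewicz block-decomposition step in the hard direction, which requires careful selection of $\eta$ with $\mu(\partial\eta) = 0$ (a generic condition obtainable by perturbing atom boundaries) to ensure the weak-$*$ continuity needed to pass from $\mu_n$ to $\mu$ without losing entropy; this is also the step where the subadditivity of $H_{\bullet}(\eta_0^{q-1})$ and the averaging in the definition of $\mu_n$ must be combined in exactly the right order.
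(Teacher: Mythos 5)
The paper does not prove this theorem; it is stated as a citation to Walters \cite{PW} and used as a black box, so there is no internal argument to compare yours against. Your sketch is the standard Misiurewicz proof of the variational principle, and the overall architecture---easy direction via the convexity inequality, hard direction via atomic measures on separated sets plus the block-entropy decomposition and $\mu(\partial\eta)=0$, ergodic decomposition via affineness of $\nu\mapsto h_\nu(f)$---is correct.

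One step as written has a real gap, in the easy direction. You pick one point $x_C$ in each atom $C$ of $\xi_0^{n-1}$ and assert that this is ``closely related to an $(n,\epsilon)$-separated set,'' but it is not: distinct atoms of $\xi_0^{n-1}$ routinely contain points that are $d_n$-close (any two atoms sharing a boundary), and the number of such atoms can far exceed any $(n,\epsilon)$-separated set, so $\log\sum_C e^{a_C}$ does not directly compare to $\log N_d(\phi,n,\epsilon)$. The standard fix (as in Walters) is to shrink each atom $A_i$ of $\xi$ to a compact $B_i\subset A_i$ with $\nu(A_i\setminus B_i)$ small, pass to the cover $\{B_0,B_1,\dots,B_k\}$ with $B_0=X\setminus\bigcup B_i$, and use the fact that a $d_n$-ball of small radius meets at most $2^n$ members of the iterated cover; this gives an extra additive $n\log 2$ on the right which, after dividing by $n$, is absorbed by sending $\epsilon\to 0$ and then $H_\nu(\xi\,|\,\beta)\to 0$. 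Your hard-direction sketch is sound as stated: the identity $\log Z_n=H_{\sigma_n}(\eta_0^{n-1})+\int S_n\phi\,d\sigma_n$ holds because $\eta$ has diameter $<\epsilon$ and so separates the points of $E_n$, the Misiurewicz block inequality is exactly Lemma \ref{MM} of the standard literature, and invariance of the weak-$*$ limit $\mu$ together with $\mu(\partial\eta)=0$ gives the needed continuity of $\nu\mapsto H_\nu(\eta_0^{q-1})$.
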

Given a continuous function $\phi$, 
a $f$-invariant measure $\nu$ is called an {\bf equilibrium state} 
if it satisfies $P_{\text{top}}(f,\phi)=h_{\nu}(f)+\int \phi \, \rd \nu$. 
 In particular, the measure of maximal entropy is the equilibrium state for $\phi=0$. 

\subsection{Nonuniform hyperbolicity}

Nonuniformly hyperbolic system exhibit expanding and contracting directions, 
but the rates of expansion and contraction are not uniform across the entire space. 
Instead, these rates can vary from point to point, 
leading to a rich variety of behaviors and properties within the system.

Let $f$ be a $ C^{1+\alpha}$ diffeomorphism (with $\alpha>0$) on a 
closed Riemannian manifold $M$ and let 
$\mu$ be an invariant measure. 
A point $ x $ is called regular if there exist numbers 
$ \lambda_{1}(x)>\cdots>\lambda_{r(x)}(x) $ and 
a decomposition of the tangent space at $ x $ into $ T_{x} M=E_{1}(x) \oplus \cdots \oplus E_{r(x)}(x) $ 
such that for every nonzero tangent vector $ v \in E_{i}(x) $, 
\begin{align*}
\lim _{n \rightarrow \pm \infty} \frac{1}{n} \log \left\|D f_{x}^{n} v\right\| & =\lambda_{i}(x). 
\end{align*}
The numbers $ \lambda_{i}(x), i=1, \ldots, r(x) $, 
are called the Lyapunov exponents of $ f$ at $ x $, 
and the dimension $\operatorname{dim} E_{i}(x) $ is called the multiplicity of  $\lambda_{i}(x) $. 
According to the Multiplicative Ergodic Theorem \cite{Oseledec}, 
the set $\Gamma'$ of regular points has full measure for every $f$-invariant measure. 
Moreover, the functions $ x \mapsto r(x), \lambda_{i}(x)$  and 
$ \operatorname{dim} E_{i}(x) $ are measurable and invariant along orbits of $f$.

For every regular point $x\in \Gamma'$, we define 
$$
\lambda^+(x)=\min\{\lambda_i, \lambda_i>0\},\quad \lambda^-(x)=\max\{\lambda_i, \lambda_i<0\}
$$ 
and the following subspace of the tangent space: 
\begin{align*}
    E^s(x)  =\underset{\lambda_i(x)<0}{\bigoplus} E_i(x), \quad E^u(x)  =\underset{\lambda_i(x)>0}{\bigoplus} E_i(x), \quad E^c(x) & =E_{i_0}(x) \,\text{ with } \lambda_{i_0}(x)=0.
   \end{align*}
There exists a measurable function $ r(x)>0 $ such that for $ \mu $-almost every point $ x \in M $, 
the stable and unstable local manifolds at $ x $, defined as 
\begin{align*}
  W^{s}(x)&=\left\{y \in B(x, r(x)): \varlimsup\limits_{n \rightarrow +\infty} \frac{1}{n} \log d\left(f^{n} x, f^{n} y\right)<0\right\}, \\
  W^{u}(x)&=\left\{y \in B(x, r(x)): \varliminf\limits_{n \rightarrow -\infty} \frac{1}{n} \log d\left(f^{n} x, f^{n} y\right)>0\right\}. 
\end{align*}
They are immersed local manifolds. 
For any $0<r<r(x) $, the set $ W^{s}(x, r) \subset W^{s}(x) $ is the ball 
centered at $ x $
with respect to the induced distances on $ W^{s}(x) $, 
and similarly, we can define the set $ W^{u}(x, r) \subset   W^{u}(x) $. 

In general, nonuniformly hyperbolic systems exhibit laminations rather than foliations; 
for further details, refer to \cite{BP07, BP23}.

\subsection{Partially hyperbolicity}

The notion of partially hyperbolicity was
 introduced by Brin and Pesin \cite{BP74}. 
Partial hyperbolicity extends uniform hyperbolicity by permitting more flexibility 
in the behavior of the center directions.



\begin{definition}\label{Definition 2.1.}
  A diffeomorphism $f$ of a compact Riemannian manifold $M$ is called {\bf partially hyperbolic} if there are constants 
  \[
  0<\lambda_1 \leq \lambda_2<\gamma_1 \leq 1 \leq \gamma_2<\mu_1 \leq \mu_2,\quad C \geq 1
  \]
   and subspaces $E^s(x), E^c(x), E^u(x)$ for every $x \in M$ such that:
  \begin{itemize}
  \item[1.] $T_x M=E^s(x) \oplus E^c(x) \oplus E^u(x)$;
  
  \item[2.] the distributions $E^s, E^c$ and $E^u$ are invariant under the derivative $D f$, i.e. 
  \begin{align*}
  d f(x) E^v(x)=E^v(f(x)),\quad \text{for}\quad v=u, c, s;
  \end{align*}
  \item[3.] $C^{-1} \lambda_1^n\left\|v^s\right\| \leq\left\|d f^n(x) v^s\right\| \leq C \lambda_2^n\left\|v^s\right\|$ for any $v^s \in E^s(x)$ and $n\in\mathbb{N}$;
  
  \item[4.] $ C^{-1} \gamma_1^n\left\|v^c\right\| \leq\left\|d f^n(x) v^c\right\| \leq C \gamma_2^n\left\|v^c\right\|$ for any $v^c \in E^c(x)$ and $n\in\mathbb{N}$;
  
 \item[5.] $ C^{-1} \mu_1^n\left\|v^u\right\| \leq\left\|d f^n(x) v^u\right\| \leq C \mu_2^n\left\|v^u\right\|$ for any $v^u \in E^u(x)$ and $n\in\mathbb{N}$.
  \end{itemize}
\end{definition}

\begin{definition}
  For the partially hyperbolic diffeomorphism considered in Definition \ref{Definition 2.1.}, 
  for any $ x \in M$, the unstable manifold $W^u(x)$ is defined as
\begin{align*}
  W^u(x)=\{ y\in M \,|\, \exists\, c>0, \forall \, n\geq 0, d(f^{-n}(x), f^{-n}(y))<ce^{-\epsilon n} e^{-\gamma_2 n} \}. 
\end{align*}
  For a sufficiently small $ \delta>0 $, 
  we define $W^{u}(x,\delta)$ as an open ball in $W^{u}(x)$, 
  centered at $x$ with radius $\delta$, 
  and its metric is derived from the metric of the leaves of the unstable manifold.

\end{definition}

We have the following result about unstable manifolds.

\begin{theorem}[\cite{HPS77} Unstable Manifold Theorem]\label{KH-u}
For a $C^r$ ($r\geq 1$) partially hyperbolic diffeomorphism $f:M\to M$ 
on a compact closed Riemannian manifold, we have:
\begin{itemize}
\item[(1)] For any $x\in M$, the unstable manifold $W^u(x)$ is a $ C^{r} $ 
immersed submanifold of dimension $ \operatorname{dim}\left(E^{u}\right) $, 
and it is tangent to the tangent space $E^u(x)$ at $x$;
\item[(2)] For any $x,y\in M$, the unstable manifolds $W^u(x)$ and $W^u(y)$ 
are either equal or disjoint;
\item[(3)] For a sufficiently small $  \delta>0 $, 
$ W_{\delta}^{u}(x)$ depends continuously on $x$ and $f$ for the $C^r$-topology.
\end{itemize}
\end{theorem}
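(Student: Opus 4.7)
My plan is to prove the three conclusions via the classical graph transform construction, following the Hirsch--Pugh--Shub scheme adapted to the partially hyperbolic setting.

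First I would set up adapted local coordinates near an arbitrary point $x_0 \in M$. Using a continuous (not necessarily invariant) extension of the splitting $E^s \oplus E^c \oplus E^u$ to a small neighborhood and the exponential map, I identify a neighborhood of each $x \in M$ with a product $B^{cs} \times B^u$, where $B^{cs}$ is a ball in $E^{cs}(x)$ and $B^u$ is a ball in $E^u(x)$. In these coordinates the derivative $Df$ acts close to the block form $\operatorname{diag}(A^{cs}, A^u)$ where $\|A^{cs}\| \le \gamma_2 + o(1)$ and $\|(A^u)^{-1}\| \le \mu_1^{-1} + o(1)$, so from the partial hyperbolicity estimates (3)--(5) of Definition~\ref{Definition 2.1.} we have the cone condition and a uniform gap $\gamma_2 < \mu_1$. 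I would then consider the space $\Sigma$ of Lipschitz sections $\sigma : B^u \to B^{cs}$ with $\mathrm{Lip}(\sigma) \le 1$, equipped with the $C^0$-norm, and define the graph transform $\mathcal{G}_x \sigma$ whose graph over $B^u$ is the image of the graph of $\sigma$ under $f$, restricted and reparametrized appropriately. Partial hyperbolicity guarantees that $\mathcal{G}_x$ preserves $\Sigma$ and acts as a contraction with contraction constant at most $\gamma_2/\mu_1 < 1$. The Banach fixed point theorem yields a unique invariant section $\sigma_x \in \Sigma$; its graph is the local unstable manifold $W^u_{\delta}(x)$, tangent to $E^u(x)$ at $x$.

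For conclusion~(1), the regularity upgrade from Lipschitz to $C^r$ is the main obstacle and is handled by iterating the argument on jet bundles. One introduces the bundle of admissible $k$-jets over the graph and extends $\mathcal{G}_x$ to a fiber contraction; applying the $C^r$ section theorem (Hirsch--Pugh--Shub) inductively for $k = 1, 2, \dots, r$ yields that $\sigma_x$ is $C^r$. The tangency to $E^u(x)$ at $x$ follows because $E^u(x)$ is itself the unique fixed point of the corresponding derivative contraction. The global unstable manifold $W^u(x)$ is then obtained as the increasing union $\bigcup_{n \ge 0} f^n(W^u_{\delta}(f^{-n}x))$; the characterization in terms of the backward contraction rate $< \mu_1^{-1}$ follows from partial hyperbolicity applied iteratively on each local piece, and each piece is a $C^r$ immersed submanifold of dimension $\dim E^u$.

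For conclusion~(2), I would argue that if $y \in W^u(x)$ then for large $n$ the backward iterates $f^{-n}x$ and $f^{-n}y$ lie in a common local unstable chart, and by uniqueness of the fixed point $\sigma_{f^{-n}x}$ in that chart one has $W^u_{\delta}(f^{-n}x) = W^u_{\delta}(f^{-n}y)$; pushing forward by $f^n$ and taking unions gives $W^u(x) = W^u(y)$. Conclusion~(3) follows from the parameter-dependent version of the contraction mapping principle: the map $(x, f) \mapsto \mathcal{G}_{x,f}$ is continuous in the $C^r$ operator topology (because $Df$ depends continuously on $x$ and on $f$ in $C^1$, and the cone-contraction estimate is uniform in a neighborhood), so the fixed point $\sigma_{x,f}$ depends continuously on $(x, f)$ in the $C^r$ topology, proving continuous dependence of $W^u_{\delta}(x)$. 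The hard part will be the $C^r$ regularity step, where one must carefully construct the fiber contraction on jet bundles and verify the hypotheses of the $C^r$ section theorem; everything else reduces to a careful bookkeeping of the cone estimates coming from Definition~\ref{Definition 2.1.}.
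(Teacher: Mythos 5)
The paper offers no proof of this statement---it is quoted as the classical Unstable Manifold Theorem and cited directly from \cite{HPS77}---so there is no internal argument to compare against; your graph-transform plus fiber-contraction outline is precisely the Hirsch--Pugh--Shub scheme used in that reference and is essentially correct, with the $C^r$ section theorem's spectral hypothesis automatic here because $\gamma_2<\mu_1\le\mu_1^{r}$. The one imprecision is in (3): continuity of $Df$ in the $C^1$ sense only gives $C^1$ dependence of the fixed section, so to get dependence of $W^u_\delta(x)$ on $f$ in the $C^r$-topology you must let $f$ vary in $C^r$ and run the parameterized fiber-contraction argument on the $r$-jet level.
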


Similarly, the stable manifold can be defined in an analogous manner, 
and a corresponding Stable Manifold Theorem exists.
However, the center distribution is not always integrable 
 which leads to many difficulties. 
 For further details,  
 we refer the reader to \cite{Wil98,HPS77, Pe04} for more information.

\section{Unstable Entropy Along Expanding Foliations}\label{Unstable Entropy Along Expanding Foliations}

In this section, we introduce the definitions and results of unstable entropy along 
expanding foliations.

 Let $(M,d)$ be a closed Riemannian manifold and $f : M \to M$ a $C^1$ diffeomorphism.
 Given a continuous foliation $\mathcal{F}$ with $C^1$ leaves, 
 let $d^{\mathcal{F}}$ denote the metric  induced by the Riemannian structure 
 on leaves of $\F$ 
 and define 
 $$
 d^{\F}_n(y,z)=\max_{0\leq j\leq n-1} d^{\F}(f^jx, f^jy).
 $$
 Let $W^{\mathcal{F}}(x,\delta)$ denote the open ball centered at $x$ of
 radius $\delta$ with respect to the metric $d^{\F}$. 
 Sometimes, we write $W^{\mathcal{F}}_\delta(x)$ as shorthand to mean $W^{\mathcal{F}}(x,\delta)$. 
 Similarly, for a given positive integer $n$, 
 we denote by $W^{\mathcal{F}}(x,n,\delta)$ the open ball centered at $x$ of
 radius $\delta$ with respect to the metric $d_n^{\F}$. 

 A set $E\subset \overline{W^{\mathcal{F}}(x,\delta)}$ is called an {\bf ($\boldsymbol{n},\boldsymbol{\epsilon}$) $\F$-spanning set} of $ \overline{W^{\mathcal{F}}(x,\delta)}$ 
 if 
 $$
 \overline{W^{\mathcal{F}}(x,\delta)}\subset \bigcup_{z\in E} W^{\F}(z,n,\epsilon). 
 $$
 A set $E\subset \overline{W^{\mathcal{F}}(x,\delta)}$ is  
 {\bf ($\boldsymbol{n},\boldsymbol{\epsilon}$) $\F$-separated } 
 if pairwise $d_n^{\F}$-distances for any points in $E$ is at least $\epsilon$.

\begin{definition}\label{def: expanding foliation}
  A continuous foliation $\mathcal{F}$ is called an 
  {\bf expanding foliation} 
  if 
  \begin{itemize}
    \item[(1)] the foliation $\F$ has  $C^1$ leaves; 
  \item[(2)]  the foliation $\mathcal{F}$ is invariant under iterations of $f$;  
  \item[(3)]  for every $x\in M$ and $\delta>0$, there exists an integer $N>0$ such that for 
  all $n\geq N$ and $\epsilon>0$, if $E \subset f^n W^{\F}(x,\delta)$ is a subset with 
  $$
  f^n W^{\F}(x,\delta)\subset \bigcup_{z\in E} W^{\F}(z, \epsilon),
  $$ 
  then the preimage $f^{-n}E$ forms an $(n,\epsilon)$ $\F$-spanning set in $W^{\F}(x,\delta)$. 
  \end{itemize}
  \end{definition}

  \begin{remark}
The expanding foliation we discuss here is different from the one introduced by Yang in \cite{Yang21}. 
We do not require that the derivative $Df$ restricted to the tangent bundle of $\F$ is uniformly expanding.
  \end{remark}

  \begin{definition}
    A measurable partition $\xi$ of $M$ is said to be subordinate to $W^u_{loc} \cap \F$ with respect to 
    a measure $\mu\in \mathcal{M}(f,M)$, if for $\mu$-a.e. $x\in M$, 
    \[
    \xi(x)\subset W^u_{\text{loc}} \cap \F_{\text{loc}}(x)
    \]
    and $\xi(x)$ contains an open neighborhood of $x$ in $W^u_{\text{loc}} \cap \F_{\text{loc}}(x)$. 
    \end{definition}

In the above definition, we consider diffeomorphisms that are either $C^{1+\alpha}$
  or $C^1$ partially hyperbolic. The above definition applies to both cases with 
  their corresponding $W^u(x)$. 
We refer the reader to Section \ref{preliminaries} for these definitions.

Fix $\mu\in\mathcal{M}(f,M)$. 
We denote by $\mathcal{P}_\mu$ the set of finite measurable partitions of $M$. 
Let $\P^{\F}_\mu$ be the set of partitions subordinate to $W^u_{loc} \cap \F$ with respect to $\mu$. 
We will use  $\P_\mu$ and $\P^{\F}_\mu$ as shorthand for $\P$ and $\P^{\F}$, respectively, 
 when it doesn't cause confusion.

\begin{proposition}[\cite{HW24} Proposition 2.6]
Let $f:M\to M$ be a $C^{1+\alpha}$ diffeomorphism on a closed manifold $M$. 
For each $\mu\in\mathcal{M}(f,M)$, $ \mathcal{P}^{\F}_\mu$ is nonempty. 

Furthermore, for any $C^1$ partially hyperbolic diffeomorphism $f$, 
the set $ \mathcal{P}^{\F}_\mu$ is also nonempty, 
where $\F$ denotes the unstable foliation of $f$. 
\end{proposition}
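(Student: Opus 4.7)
The plan is to construct a partition whose elements are intersections of a small ball in $M$ with local unstable leaves, following the classical Ledrappier--Strelcyn / Ledrappier--Young scheme. The two things that must be verified are that each such intersection contains an open neighborhood (in the leaf metric) of $\mu$-a.e.\ one of its points, and that the resulting family is a Borel measurable partition of $M$.

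For the $C^{1+\alpha}$ case I would first invoke Pesin theory. On the full $\mu$-measure set $\Gamma'$ of Oseledec-regular points the local unstable manifold $W^u_{\loc}(x)$ exists with a size $r(x)>0$ that depends measurably on $x$. By Lusin's theorem there is, for each $\ell\in\N$, a compact ``Pesin block'' $\Lambda_\ell$ with $\mu(\Lambda_\ell)\geq 1-1/\ell$ on which $x\mapsto W^u_{\loc}(x)$ is $C^1$-continuous and $r(x)\geq r_\ell>0$ uniformly. Choose $\ell$ so that $\mu(\Lambda_\ell)>0$, pick a Lebesgue density point $x_0\in\Lambda_\ell$, and fix a ball $B=B(x_0,\rho)$ with $\rho\ll r_\ell$. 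Within $B$ the leaves $W^u_{\loc}(y)$ for $y\in\Lambda_\ell$ are uniformly transverse to a smooth local transversal through $x_0$ and vary continuously, so I set
\[
\xi(y):=W^u_{\loc}(y)\cap\F_{\loc}(y)\cap B \quad\text{for } y\in\Lambda_\ell\cap B,
\]
and let $M\setminus\bigcup_y\xi(y)$ be a single atom. Since $\rho\ll r_\ell$, each $\xi(y)$ contains an open $d^{\F}$-neighborhood of $y$ in $W^u_{\loc}(y)\cap\F_{\loc}(y)$, which is exactly the subordinacy requirement. Measurability of the partition comes from continuity of $y\mapsto W^u_{\loc}(y)$ on $\Lambda_\ell$, which renders the holonomy maps from the transversal onto the plaques Borel.

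The $C^1$ partially hyperbolic case is easier: by Theorem \ref{KH-u} the unstable foliation $W^u=\F$ already has uniformly sized, continuously varying leaves and admits foliation charts globally. One picks any $x_0\in M$, a foliation box $B$ around it, and sets $\xi(y)=W^u_{\loc}(y)\cap B$ for $y\in B$, placing the complement into a single atom; no Pesin block is needed and all the measurability and subordinacy checks are immediate from the continuity of the foliation.

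The main obstacle is the boundary effect: if $y\in\Lambda_\ell\cap B$ lies close to $\partial B$, the plaque $\xi(y)$ can fail to contain any $d^{\F}$-neighborhood of $y$ in the ambient leaf. This is precisely why $x_0$ must be a Lebesgue density point of $\Lambda_\ell$ and $\rho$ must be strictly less than $r_\ell$: the measure of such ``bad'' $y$ then tends to zero as $\rho\to 0$, and by iterating the construction along a shrinking sequence of such balls and discarding $\mu$-null sets one obtains a partition whose elements are plaques for $\mu$-a.e.\ point. Carrying out this refinement while preserving Borel measurability of the final partition is the only delicate step; everything else is a routine application of Pesin theory and the Unstable Manifold Theorem.
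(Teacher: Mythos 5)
Your construction fails at a basic point: the single atom $M\setminus\bigcup_y\xi(y)$ contains $M\setminus B$ and therefore has positive $\mu$-measure. For every $x$ in that atom, $\xi(x)$ is the whole atom, which is certainly not contained in $W^u_{\loc}(x)\cap\F_{\loc}(x)$, so the subordinacy conditions fail on a set of positive measure. Since the definition requires the conditions for $\mu$-a.e.\ $x\in M$, the partition you build is not an element of $\P^{\F}_\mu$. Shrinking $\rho$ and ``iterating along a shrinking sequence of balls'' makes matters worse, not better: $\mu(M\setminus B)\to 1$ as $\rho\to 0$. On the other hand, the ``boundary effect'' you single out as the main obstacle is in fact not one: because $B$ is open and $y\in B$, the set $W^u_{\loc}(y)\cap\F_{\loc}(y)\cap B$ is relatively open in $W^u_{\loc}(y)\cap\F_{\loc}(y)$, so it automatically contains a $d^{\F}$-neighborhood of $y$ no matter how close $y$ is to $\partial B$.

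The missing idea is that the plaques must exhaust $M$ up to a $\mu$-null set, not just one ball. The standard repair is to start from a \emph{finite} partition $\alpha$ of all of $M$ into sets of small diameter whose boundaries are $\mu$-null (achievable by choosing cutting radii generically), and then take $\zeta(x)$ to be the connected component through $x$ of $\alpha(x)\cap W^u_{\loc}(x)\cap\F_{\loc}(x)$; this is exactly what the paper alludes to when it notes that for $\alpha\in\P$ the partition $\zeta(x)=\alpha(x)\cap W^{\F}_{\loc}(x)$ lies in $\P^{\F}$. Since $\mu$-a.e.\ $x$ lies in the interior of $\alpha(x)$, each such $\zeta(x)$ contains a leaf-open neighborhood of $x$, and using connected components makes $\zeta$ a well-defined measurable partition. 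In the $C^{1+\alpha}$ case one additionally localizes to a Pesin block of nearly full measure (exactly as you set up with $\Lambda_\ell$, density points, and $\rho\ll r_\ell$) to keep plaque diameters below the local Pesin radius. The Pesin-theoretic machinery in your proposal is sound; what is missing is coverage of all of $M$, rather than a single ball, by plaques.
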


For each $\mu\in \mathcal{M}(f,M)$, we define the \textbf{unstable entropy} of $\F$ by 
\begin{align*}
h_\mu^{\mathcal{F}}(f)=\sup_{\eta\in \mathcal{P}^{\F}}h_\mu(f|\eta), 
\end{align*}
where 
\begin{align*}
h_\mu(f|\eta)
=\sup_{\xi \in \mathcal{P}}h_\mu(f, \xi|\eta), \quad \text{and} \quad 
h_\mu(f, \xi|\eta)=\varlimsup_{n\to \infty}\frac{1}{n}H_\mu(\xi_0^{n-1}|\eta), 
\end{align*}

\begin{corollary}
For any invariant measure $\mu$, we have $h_\mu^{\mathcal{F}}(f)\leq h_\mu(f)$. 
\end{corollary}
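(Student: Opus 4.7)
The plan is to unfold both sides of the inequality and reduce the claim to the monotonicity property of conditional entropy, namely $H_\mu(\alpha|\beta)\leq H_\mu(\alpha)$ for any measurable partitions $\alpha,\beta$. Since by definition
\begin{align*}
h_\mu^{\mathcal{F}}(f)=\sup_{\eta\in\P^{\F}}\sup_{\xi\in\P}h_\mu(f,\xi|\eta)
=\sup_{\eta\in\P^{\F}}\sup_{\xi\in\P}\varlimsup_{n\to\infty}\frac{1}{n}H_\mu(\xi_0^{n-1}|\eta),
\end{align*}
it suffices to fix an arbitrary $\eta\in\P^{\F}$ and $\xi\in\P$ and bound $h_\mu(f,\xi|\eta)$ by $h_\mu(f)$.

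First I would invoke the standard inequality $H_\mu(\xi_0^{n-1}|\eta)\leq H_\mu(\xi_0^{n-1})$, which is immediate from the concavity of $-t\log t$ (equivalently, from Jensen's inequality applied to the disintegration $\{\mu_x^\eta\}$). Dividing by $n$ and passing to $\varlimsup$ yields
\begin{align*}
h_\mu(f,\xi|\eta)=\varlimsup_{n\to\infty}\frac{1}{n}H_\mu(\xi_0^{n-1}|\eta)\leq\lim_{n\to\infty}\frac{1}{n}H_\mu(\xi_0^{n-1})=h_\mu(f,\xi),
\end{align*}
where the limit on the right exists because $\xi$ is finite, so $\{H_\mu(\xi_0^{n-1})\}$ is subadditive in $n$.

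Finally, taking the supremum over all finite measurable partitions $\xi\in\P$ gives $\sup_\xi h_\mu(f,\xi|\eta)\leq h_\mu(f)$ by the very definition of the metric entropy. Since the resulting bound is independent of $\eta\in\P^{\F}$, taking the supremum over $\eta$ completes the proof. There is no serious obstacle here: the argument is purely formal and rests only on the monotonicity of conditional entropy together with the fact that finite partitions automatically satisfy $H_\mu(\xi)<\infty$, so no integrability issue needs to be addressed.
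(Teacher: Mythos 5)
Your argument is correct and is the standard one: since conditioning never increases entropy, $H_\mu(\xi_0^{n-1}\mid\eta)\le H_\mu(\xi_0^{n-1})$, so $h_\mu(f,\xi\mid\eta)\le h_\mu(f,\xi)\le h_\mu(f)$, and taking suprema over $\xi\in\P$ and then $\eta\in\P^{\F}$ gives the claim. The paper states this corollary without proof as an immediate consequence of the definitions, and your reasoning is exactly the intended one.
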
 

The following lemma gives the ergodic decomposition of unstable entropy.

\begin{lemma}[\cite{HHW17} (5.1), \cite{HW24} Corollary A.3]\label{decomposition}
Let $f:M\to M$ be a $C^{1+\alpha}$ diffeomorphism on a closed manifold $M$ 
and let $\mu\in\mathcal{M}(f,M)$. We denote by 
$\mu = \int \mu_{\tau(x)} \, \rd \mu (\tau(x))$ the ergodic decomposition of $\mu$. 
Given an expanding foliation $\F$, then 
\begin{align*}
h_{\mu}^{\F}(f)=\int h_{\mu_{\tau(x)}}^{\F}(f) \,\rd\mu(\tau(x)). 
\end{align*}
Consequently, for any continuous function $\phi: M\to \mathbb{R}$, we have 
\begin{align*}
h_\mu^{\F}(f)+\int_M \phi \,\rd\mu=\int\left(h_{\mu_{\tau(x)}}^{\F}(f)+\int_M \phi \,\rd\mu_{\tau(x)}\right) \,\rd\mu(\tau(x)). 
\end{align*}
Furthermore, for any $C^1$ partially hyperbolic diffeomorphism $f$, 
the above results hold when $\F$ denotes the unstable foliation of $f$. 
\end{lemma}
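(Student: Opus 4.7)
The plan is to adapt the classical ergodic decomposition of Kolmogorov--Sinai entropy to the conditional setting along an expanding foliation. I would fix a subordinate measurable partition $\eta \in \mathcal{P}^{\F}_\mu$ and first establish the formula
\[
h_\mu(f, \xi \,|\, \eta) = \int h_{\mu_\tau}(f, \xi \,|\, \eta)\,\rd\mu(\tau)
\]
for every finite $\xi \in \mathcal{P}$, then pass suprema over $\xi$ and over $\eta$. The key preliminary observation is that being subordinate to $W^u_{\loc}\cap\F$ is a pointwise property (``$\eta(x)$ contains an open neighborhood of $x$ in $W^u_{\loc}\cap\F_{\loc}(x)$''), so it holds on a $\mu$-full set if and only if it holds on a $\mu_\tau$-full set for $\mu$-a.e.\ $\tau$. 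Consequently any $\eta \in \mathcal{P}^{\F}_\mu$ also lies in $\mathcal{P}^{\F}_{\mu_\tau}$ for $\mu$-a.e.\ component, so the right-hand integrand is well defined.

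For the finite-$\xi$ identity, I would refine $\eta$ if necessary so that $f^{-1}\eta \geq \eta$, write $H_\mu(\xi_0^{n-1}\,|\,\eta) = \int I_\mu(\xi_0^{n-1}\,|\,\eta)(x)\,\rd\mu(x)$, and apply the conditional Shannon--McMillan--Breiman theorem: $\tfrac{1}{n} I_\mu(\xi_0^{n-1}\,|\,\eta)(x)$ converges pointwise $\mu$-a.e.\ to $h_{\mu_{\tau(x)}}(f, \xi\,|\,\eta)$. Since the conditional information function is dominated in $L^1$ whenever $H_\mu(\xi) < \infty$, dominated convergence yields the integral identity. Taking the supremum over a refining sequence of finite partitions $\xi_k$ whose joins generate the Borel $\sigma$-algebra, together with monotone convergence and the monotonicity $h_\mu(f,\xi_k\,|\,\eta) \nearrow h_\mu(f\,|\,\eta)$, gives
\[
h_\mu(f\,|\,\eta) = \int h_{\mu_\tau}(f\,|\,\eta)\,\rd\mu(\tau).
\]

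To conclude the lemma I would take the supremum over $\eta \in \mathcal{P}^{\F}_\mu$. The $\leq$ direction is immediate from the previous display and the compatibility observation above. For the reverse inequality one must assemble a single $\eta \in \mathcal{P}^{\F}_\mu$ that is $\varepsilon$-nearly optimal for $\mu_\tau$-a.e.\ $\tau$ simultaneously; this is the main technical step of \cite{HW24} and is accomplished via a measurable selection argument over a countable dense family of subordinate partitions. The consequence involving $\phi$ then follows immediately by combining the entropy identity with $\int_M \phi\,\rd\mu = \int \bigl(\int_M \phi\,\rd\mu_\tau\bigr)\,\rd\mu(\tau)$. Finally, the extension to $C^1$ partially hyperbolic $f$ with $\F=W^u$ goes through verbatim, since the argument only uses nonemptiness of $\mathcal{P}^{\F}_\mu$ (supplied by the preceding proposition) together with Birkhoff's and Shannon--McMillan--Breiman's theorems, rather than any $C^{1+\alpha}$ regularity of $f$.

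The main obstacle I expect is the interchange of $\sup_\eta$ with the ergodic integral in the last step: because $\eta$ ranges over measurable (not finite) partitions and the optimal choice in $\mathcal{P}^{\F}_{\mu_\tau}$ generally depends on $\tau$, there is no reason an a priori pointwise-in-$\tau$ supremum should be realized by a single $\mu$-measurable partition. Overcoming this requires a careful measurable selection to glue the nearly-optimal $\eta_\tau$'s into one global object, which is where the construction in the appendix of \cite{HW24} plays an essential role.
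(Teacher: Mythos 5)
The paper does not prove this lemma; it simply cites \cite{HHW17} (5.1) and \cite{HW24} Corollary A.3, so there is no in-paper proof to compare your argument against. That said, your sketch is broadly reasonable and you correctly flag where the genuine technical content lies. One comment worth making: the measurable-selection difficulty you identify for the $\geq$ direction --- needing a single $\eta$ that is near-optimal for $\mu_\tau$-a.e.\ $\tau$ simultaneously --- is not actually what the cited arguments confront. In \cite{HHW17} and \cite{HW24} (and as reflected in the paper's own Lemma \ref{entropy, Bowen ball}, a Brin--Katok-type characterization of $h_\mu^{\F}(f)$ valid for \emph{every} $\eta \in \mathcal{P}^{\F}$), one first establishes that $h_\mu(f|\eta)$ is \emph{independent} of the choice of subordinate increasing partition $\eta$, so the supremum in the definition of $h_\mu^{\F}(f)$ is attained at every admissible $\eta$. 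Once that is in hand, your $\leq$ argument together with the decomposition $h_\mu(f,\xi|\eta) = \int h_{\mu_\tau}(f,\xi|\eta)\,\rd\mu(\tau)$ already yields the full equality with no selection step at all: fix one subordinate $\eta$, note it is subordinate for $\mu_\tau$-a.e.\ component (your pointwise observation), and invoke $\eta$-independence on both sides. Your route would presumably also work, but it introduces a selection step the cited proofs avoid, and you would still owe a check that the conditional Shannon--McMillan--Breiman theorem applies to the $\eta$ at hand (standard versions require an increasing partition, which is why you rightly propose to refine $\eta$ first, but you should then verify the refined partition remains in $\mathcal{P}^{\F}$). The extension to $C^1$ partially hyperbolic diffeomorphisms and the consequence for $\phi$ are handled exactly as you say.
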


Given a continuous function $\phi: M\to \mathbb{R}$ and an expanding foliation $\F$, 
the {\bf partial topological pressure} with respect to $\phi$ is defined as 
\begin{align*}
P^{\mathcal{F}}_{\text{top}}(f, \phi, \overline{W^{\F}(x,\delta)})=\lim_{\epsilon \to 0}\varlimsup_{n\to \infty}\frac{1}{n}\log S^{\F}_d(\overline{W^{\mathcal{F}}(x,\delta)}, n, \epsilon), 
\end{align*}
where 
\begin{align*}
S^{\F}_d(\overline{W^{\mathcal{F}}(x,\delta)}, n, \epsilon):=\inf\left\{\sum_{y\in E} e^{S_n\phi(y)} \left| E \text{\ is an\ } (n, \epsilon) \right. \text{\ $\mathcal{F}$-spanning set of\ }\overline{W^{\mathcal{F}}(x,\delta)}\right\}. 
\end{align*}
When $\phi=0$, we refer to $P^{\mathcal{F}}_{\text{top}}(f, 0)$ as the 
{\bf partial topological entropy} of $f$, 
and we abbreviate the notions as $h^{\mathcal{F}}_{\text{top}}(f)$ and $h^{\mathcal{F}}_{\text{top}}(f, \overline{W^{\F}(x,\delta)})$. 

Naturally, we could define the partial topological pressure by using $(n,\epsilon)$ $\F$-separated sets. 
It is easy to see that 
\begin{align*}
P^{\mathcal{F}}_{\text{top}}(f, \phi)&:=\sup_{x\in M}P^{\mathcal{F}}(f, \phi, \overline{W^{\F}(x,\delta)}),
\end{align*}
where
\begin{align*}
P^{\mathcal{F}}_{\text{top}}(f, \phi, \overline{W^{\F}(x,\delta)}):=\lim_{\epsilon \to 0}\varlimsup_{n\to \infty}\frac{1}{n}\log N^{\F}_d(\overline{W^{\mathcal{F}}(x,\delta)}, n, \epsilon), 
\end{align*}
and
\begin{align*}
N^{\F}_d(\overline{W^{\mathcal{F}}(x,\delta)}, n, \epsilon):=\sup\left\{\sum_{y\in E} e^{S_n\phi(y)} \left| E \text{\ is an\ } (n, \epsilon) \right. \text{\ $\mathcal{F}$-separated set of\ }\overline{W^{\mathcal{F}}(x,\delta)}\right\}. 
\end{align*}
Similarly, we have 
\begin{align*}
P^{\mathcal{F}}_{\text{top}}(f, \phi, \overline{W^{\F}(x,\delta)}):=\lim_{\epsilon \to 0}\varlimsup_{n\to \infty}\frac{1}{n}\log S^{\F}_d(\overline{W^{\mathcal{F}}(x,\delta)}, n, \epsilon), 
\end{align*}
where
\begin{align*}
S^{\F}_d(\overline{W^{\mathcal{F}}(x,\delta)}, n, \epsilon):=\sup\left\{\sum_{y\in E} e^{S_n\phi(y)} \left| E \text{\ is an\ } (n, \epsilon) \right. \text{\ $\mathcal{F}$-spanning set of\ }\overline{W^{\mathcal{F}}(x,\delta)}\right\}. 
\end{align*}

From the definition, we deduce the following corollary.

\begin{corollary}\label{top pressure corollary}
For any expanding foliation $\F$, we have $P^{\mathcal{F}}_{\text{top}}(f, \phi)\leq P_{\text{top}}(f, \phi)$. 
\end{corollary}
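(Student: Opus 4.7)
The plan is to use the separated-set characterization of $P^{\F}_{\text{top}}$ recorded just above the corollary, together with the elementary fact that along any leaf of $\F$ the intrinsic leaf metric dominates the restriction of the ambient metric: $d^{\F}(y,z)\geq d(y,z)$ whenever $y,z$ lie on a common leaf. This is because $d^{\F}$ is computed as an infimum of lengths of curves constrained to the leaf, while $d$ is an infimum over all curves in $M$. Passing to the $n$-th Bowen metrics, this yields $d_n^{\F}(y,z)\geq d_n(y,z)$ as well.

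Concretely, fix $x\in M$ and $\delta>0$. Given any $(n,\epsilon)$ $\F$-separated subset $E\subset\overline{W^{\F}(x,\delta)}$, the comparison above forces $d_n(y,z)\geq \epsilon$ for all distinct $y,z\in E$, so $E$ is automatically an $(n,\epsilon)$-separated subset of $M$ in the ambient sense. Therefore
$$\sum_{y\in E} e^{S_n\phi(y)}\;\leq\; N_d(\phi,n,\epsilon),$$
and taking the supremum over admissible $E$ yields $N^{\F}_d(\overline{W^{\F}(x,\delta)}, n, \epsilon)\leq N_d(\phi,n,\epsilon)$. Applying $\frac{1}{n}\log$, then $\varlimsup_{n\to\infty}$, then $\epsilon\to 0$ produces
$$P^{\F}_{\text{top}}(f,\phi,\overline{W^{\F}(x,\delta)})\;\leq\; P_{\text{top}}(f,\phi),$$
and the supremum over $x$ finishes the argument.

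There is no substantive obstacle here; the whole statement is a one-step unpacking of definitions once the separated-set reformulation of partial pressure is in hand. The only mild care needed is to verify that the separated-set version of $P^{\F}_{\text{top}}$ truly agrees with the spanning-set definition given at the start of the subsection, but this equivalence is asserted as immediate in the paragraph preceding the corollary and can simply be quoted. Should one instead prefer to work directly with spanning sets, one could note that any ambient $(n,\epsilon)$-spanning set $F\subset M$, restricted via a small Lebesgue-number style enlargement to points near $\overline{W^{\F}(x,\delta)}$, gives rise to an $\F$-spanning set of comparable cardinality, but this route is unnecessarily technical and offers no extra generality.
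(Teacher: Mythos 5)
There is a genuine gap, and it sits in the central step. You correctly observe that the intrinsic leaf metric dominates the ambient metric, $d^{\F}(y,z)\geq d(y,z)$, and hence $d_n^{\F}\geq d_n$. But from $d_n^{\F}(y,z)\geq\epsilon$ and $d_n^{\F}(y,z)\geq d_n(y,z)$ one cannot deduce $d_n(y,z)\geq\epsilon$; the two inequalities point in the same direction and say nothing about $d_n(y,z)$ from below. In other words, your comparison shows that an \emph{ambient} $(n,\epsilon)$-separated set of points lying on a common leaf is automatically $(n,\epsilon)$ $\F$-separated, which is the opposite of what the argument needs. Two points on a leaf can be $\F$-far apart yet ambient-close (the leaf may recur near itself), so an $(n,\epsilon)$ $\F$-separated set need not be $(n,\epsilon)$-separated in $M$, and the claimed inequality $N^{\F}_d\leq N_d$ does not follow. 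The alternative route you sketch at the end has the same defect in disguise: $\F$-Bowen balls $W^{\F}(z,n,\epsilon)$ are \emph{contained in} (and typically strictly smaller than) the ambient Bowen balls $B(z,n,\epsilon)$, so restricting an ambient $(n,\epsilon)$-spanning set near the leaf produces an ambient-spanning set of $\overline{W^{\F}(x,\delta)}$ but not an $\F$-spanning set.

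The corollary is nonetheless true, but it is not the one-line definitional unwinding your proposal treats it as. A correct argument along the separated-set route must exploit the scale at which the discrepancy between $d^{\F}$ and $d$ can occur: take $\delta<\epsilon/2$; for distinct $y,z$ in a $(n,\epsilon)$ $\F$-separated set the separation is not achieved at $j=0$, so there is a first index $j_0\geq 1$ with $d^{\F}(f^{j_0}y,f^{j_0}z)\geq\epsilon$; since $d^{\F}(f^{j_0-1}y,f^{j_0-1}z)<\epsilon$ and $f$ distorts leaf lengths by at most $C:=\sup\|Df\|$, one gets $\epsilon\leq d^{\F}(f^{j_0}y,f^{j_0}z)<C\epsilon$; and because the foliation has uniformly $C^1$ leaves on a compact manifold, for $\epsilon$ small enough there is a constant $L\geq 1$ such that $d^{\F}(p,q)<C\epsilon$ forces $d(p,q)\geq d^{\F}(p,q)/L$, giving $d_n(y,z)\geq\epsilon/L$. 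This yields $N^{\F}_d(\overline{W^{\F}(x,\delta)},n,\epsilon)\leq N_d(\phi,n,\epsilon/L)$ for small $\epsilon$, after which the limits and the supremum over $x$ proceed as you describe. (Alternatively, one can derive the corollary from $h^{\F}_\mu\leq h_\mu$ together with the two variational principles, though the paper does not supply the $\F$-side variational principle at this point.) You should repair the separated-set step by inserting this first-crossing-time argument and the uniform local comparison of $d^{\F}$ with $d$ at small leaf-scale.
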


It is worth mentioning that 
the definition of unstable metric entropy in our work is consistent with previous studies \cite{HW24}, 
while the definition of unstable topological entropy differs. 
This difference arises because earlier works considered invariant laminations, 
whereas we focus on invariant expanding foliations. 
Using the methods from \cite{HWZ21, HW24}, we obtain the following results.

\begin{lemma}\label{smalldelta}
The partial topological pressure is independent of $\delta$. 
More precisely, for any $\delta >0$, we have $P_{\operatorname{top}}^{\F}(f, \phi)=\sup\limits_{x\in M}P_{\operatorname{top}}^{\F}(f, \phi, \overline{W^u(x,\delta)})$. 
\end{lemma}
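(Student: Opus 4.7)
The plan is to prove the $\delta$-independence by fixing $0 < \delta_1 < \delta_2$ arbitrarily and showing the two-sided inequality $P(\delta_1) = P(\delta_2)$, where $P(\delta) := \sup_{x \in M} P_{\operatorname{top}}^{\F}(f,\phi,\overline{W^{\F}(x,\delta)})$. Two preliminary ingredients will be invoked throughout. First, since $\F$ is a continuous foliation with $C^1$ leaves on the compact manifold $M$, each closed leaf-ball $\overline{W^{\F}(x,\delta)}$ is compact in its leaf (standard plaque structure). Second, by uniform continuity of $\phi$ on $M$, there is a modulus of continuity $\omega_\phi$ with $\omega_\phi(\epsilon) \to 0$ as $\epsilon \to 0$, and $d_n^{\F}(y,y')<\epsilon$ implies $|S_n\phi(y) - S_n\phi(y')| \leq n\omega_\phi(\epsilon)$.

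For the direction $P(\delta_1) \leq P(\delta_2)$, fix $x\in M$ and let $E\subset\overline{W^{\F}(x,\delta_2)}$ be an $(n,\epsilon)$ $\F$-spanning set of the larger ball. I construct $E'\subset\overline{W^{\F}(x,\delta_1)}$ by, for each $y\in E$ whose Bowen ball meets the smaller ball, selecting some $y'\in\overline{W^{\F}(x,\delta_1)}$ with $d_n^{\F}(y,y')<\epsilon$. A triangle inequality shows $E'$ is $(n,2\epsilon)$ $\F$-spanning for $\overline{W^{\F}(x,\delta_1)}$, and the second ingredient yields
\begin{align*}
S_d^{\F}(\overline{W^{\F}(x,\delta_1)},n,2\epsilon) \leq e^{n\omega_\phi(\epsilon)}\, S_d^{\F}(\overline{W^{\F}(x,\delta_2)},n,\epsilon).
\end{align*}
Dividing by $n$, taking $\limsup_n$, then $\epsilon \to 0$ so that $\omega_\phi(\epsilon)\to 0$, gives $P_{\operatorname{top}}^{\F}(f,\phi,\overline{W^{\F}(x,\delta_1)}) \leq P_{\operatorname{top}}^{\F}(f,\phi,\overline{W^{\F}(x,\delta_2)})$, and supping in $x$ yields $P(\delta_1)\leq P(\delta_2)$.

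For the reverse direction $P(\delta_2) \leq P(\delta_1)$, I use the first ingredient to cover $\overline{W^{\F}(x,\delta_2)}$ by finitely many balls $W^{\F}(z_j,\delta_1)$, $j=1,\ldots,N(x)$, with $z_j \in \overline{W^{\F}(x,\delta_2)}$. If $E_j$ is an $(n,\epsilon)$ $\F$-spanning set of $\overline{W^{\F}(z_j,\delta_1)}$, then $\bigcup_j E_j$ spans $\overline{W^{\F}(x,\delta_2)}$ in $\F(x)$ but possibly with centers outside it; applying the same shift-into-ball device as above produces an $(n,2\epsilon)$ $\F$-spanning set inside $\overline{W^{\F}(x,\delta_2)}$ and gives
\begin{align*}
S_d^{\F}(\overline{W^{\F}(x,\delta_2)},n,2\epsilon) \leq e^{n\omega_\phi(\epsilon)} \sum_{j=1}^{N(x)} S_d^{\F}(\overline{W^{\F}(z_j,\delta_1)},n,\epsilon).
\end{align*}
Using $\sum_j a_j \leq N(x)\max_j a_j$, the factor $\frac{1}{n}\log N(x) \to 0$, and the fact that finite $\max_j$ commutes with both $\limsup_n$ and the decreasing limit $\lim_{\epsilon \to 0}$ (which is a sup), passage to the limits yields $P_{\operatorname{top}}^{\F}(f,\phi,\overline{W^{\F}(x,\delta_2)}) \leq \max_j P_{\operatorname{top}}^{\F}(f,\phi,\overline{W^{\F}(z_j,\delta_1)}) \leq P(\delta_1)$. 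The upper bound is independent of $x$, so taking the supremum in $x$ gives $P(\delta_2)\leq P(\delta_1)$.

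The main technical friction is the definitional constraint that an $\F$-spanning set be contained in the ball it spans; this obstructs naive restriction or union of spanning sets. The shift-into-ball device absorbs this at the cost of a multiplicative factor $e^{n\omega_\phi(\epsilon)}$ that vanishes in the limit. A secondary point is the need for compactness of $\overline{W^{\F}(x,\delta_2)}$ in its leaf to produce a finite sub-cover; this follows from standard foliation theory (uniform plaque structure on continuous foliations with $C^1$ leaves over a compact manifold), so is handled by citation rather than by a separate argument.
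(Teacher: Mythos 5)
Your proof is correct, and the heart of it---covering the larger leaf-ball $\overline{W^{\F}(x,\delta_2)}$ by finitely many balls $\overline{W^{\F}(z_j,\delta_1)}$ and bounding the spanning-set sums---is the same covering argument the paper uses. There are two substantive differences worth noting. First, you prove both inequalities $P(\delta_1)\leq P(\delta_2)$ and $P(\delta_2)\leq P(\delta_1)$ explicitly, whereas the paper treats one direction as ``immediate from the definition'' and only carries out the covering step; your version is more self-contained and does not rely on parsing the (somewhat ambiguous) definition of $P^{\F}_{\operatorname{top}}(f,\phi)$. Second, and more importantly, you correctly identify and handle a point that the paper's proof glosses over: the paper's chain of inequalities uses $S^{\F}_d(\overline{W^{\F}(y,\delta)},n,\epsilon_0)\leq\sum_i S^{\F}_d(\overline{W^{\F}(y_i,\delta_1)},n,\epsilon_0)$, which implicitly assumes that a union of spanning sets of the small balls is a spanning set of the large ball---but by the paper's definition an $\F$-spanning set of $\overline{W^{\F}(y,\delta)}$ must be \emph{contained} in $\overline{W^{\F}(y,\delta)}$, and the union need not be. Your ``shift-into-ball'' device repairs this at the cost of replacing $\epsilon$ by $2\epsilon$ and picking up a factor $e^{n\omega_\phi(\epsilon)}$, both of which vanish in the subsequent limits; this is a genuine tightening of the argument. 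Your limit-passage bookkeeping (exchanging $\limsup_n$ and $\lim_{\epsilon\to 0}$ with a finite $\max_j$) is also handled carefully and correctly.
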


\begin{proof}
Fix $\delta >0$ and $\rho >0$, 
there exists $y \in M$ such that
\begin{align}\label{smalldelta-(1)}
\sup_{x\in M}P^{\F}_{\text{top}}(f, \phi, \overline{W^{\F}(x,\delta)})\leq P^{\F}_{\text{top}}(f, \phi, \overline{W^{\F}(y,\delta)})+\frac{\rho}{3}.
\end{align}
For this $\rho$, there exists $\epsilon_0>0$ such that 
\begin{align}\label{smalldelta-(2)}
\begin{aligned}
P^{\F}_{\text{top}}(f, \phi, \overline{W^{\F}(y,\delta)})&
=\lim_{\epsilon \to 0}\varlimsup_{n\to \infty}\frac{1}{n}\log S^{\F}_d(\overline{W^{\mathcal{F}}(y,\delta)}, n, \epsilon) \\ 
&\leq \varlimsup_{n\to \infty}\frac{1}{n}\log S^{\F}_d(\overline{W^{\mathcal{F}}(y,\delta)}, n, \epsilon_0)+\frac{\rho}{3}.
\end{aligned}
\end{align}
We can choose $\delta_1>0$ small enough such that $\delta_1<\delta$ and
\begin{align}\label{smalldelta-(3)}
P^{\F}_{\text{top}}(f, \phi) \geq\sup_{x\in M}P^{\F}_{\text{top}}(f, \phi, \overline{W^{\F}(x,\delta_1)})-\frac{\rho}{3}.
\end{align}
Then there exist $y_i \in \overline{W^{\F}(y,\delta)}, 1\leq i\leq N$ where $N$ only depends on $\delta$, $\delta_1$, and the Riemannian structure on $\overline{W^{\F}(y,\delta)}$, such that
\begin{align*}
\overline{W^{\F}(y,\delta)} \subset \bigcup_{i=1}^N\overline{W^{\F}(y_i, \delta_1)}.
\end{align*}
This means that for some $1\leq j\leq N$, 
\begin{align}\label{smalldelta-(4)}
S^{\F}_d(\overline{W^{\mathcal{F}}(y,\delta)}, n, \epsilon_0)\leq \sum_{i=1}^N S^{\F}_d(\overline{W^{\mathcal{F}}(y_j,\delta_1)}, n, \epsilon_0)\leq  N S^{\F}_d(\overline{W^{\mathcal{F}}(y_1,\delta_1)}, n, \epsilon_0). 
\end{align}
Combining \eqref{smalldelta-(1)}, \eqref{smalldelta-(2)}, \eqref{smalldelta-(3)} and \eqref{smalldelta-(4)}, we obtain 
\begin{align*}
\begin{aligned}
\sup_{x\in M}P^{\F}_{\text{top}}(f, \phi, \overline{W^{\F}(x,\delta)})&\leq P^{\F}_{\text{top}}(f, \phi, \overline{W^{\F}(y,\delta)})+\frac{\rho}{3} \\
&\leq\varlimsup_{n\to \infty}\frac{1}{n}\log S^{\F}_d(\overline{W^{\mathcal{F}}(y,\delta)}, n, \epsilon_0)+\frac{2\rho}{3} \\
&\leq \varlimsup_{n\to \infty}\frac{1}{n}\log \left(N S^{\F}_d(\overline{W^{\mathcal{F}}(y_j,\delta_1)}, n, \epsilon_0)\right)+\frac{2\rho}{3}  \\
&\leq \lim_{\epsilon \to 0} \varlimsup_{n\to \infty} \log \left(S^{\F}_d(\overline{W^{\mathcal{F}}(y_j,\delta_1)}, n, \epsilon)\right)+\frac{2\rho}{3}\\
& =P^{\F}_{\text{top}}(f, \phi, \overline{W^{\F}(y_j,\delta_1)})+\frac{2\rho}{3}\\
&\leq \sup_{x\in M}P^{\F}_{\text{top}}(f, \phi, \overline{W^{\F}(x,\delta_1)})+\frac{2\rho}{3}\leq P^{\F}_{\text{top}}(f, \phi)+\rho .
\end{aligned}
\end{align*}
Since $\rho>0$ is arbitrary, we have $\displaystyle \sup_{x\in M}P^{\F}_{\text{top}}(f, \phi, \overline{W^{\F}(x,\delta)})\leq P^{\F}_{\text{top}}(f, \phi)$.
 The reverse inequality is immediate from the definition, thus, we complete the proof. 
\end{proof}

\begin{proposition}\label{pro:hmuleqpressure}
  Let $f:M\to M$ be a $C^{1+\alpha}$ diffeomorphism on a closed manifold $M$ 
and let $\mu\in\mathcal{M}(f,M)$. 
Given an expanding foliation $\F$, we have 
$$
h_\mu^{\F}(f)+\int_M\phi \,\rd\mu \leq P_{\operatorname{top}}^{\F}(f, \phi).
$$
Moreover, 
for any $C^1$ partially hyperbolic diffeomorphism $f$, 
the above result holds when $\F$ denotes the unstable foliation of $f$ (see Theorem A in \cite{HWZ21}). 
\end{proposition}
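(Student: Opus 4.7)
The plan is to adapt Misiurewicz's classical variational argument, but carried out conditionally along leaves of the expanding foliation $\F$. First, by the ergodic decomposition supplied in Lemma \ref{decomposition}, it suffices to establish the inequality for ergodic $\mu$, the general case following by integrating against $\mu = \int \mu_{\tau(x)}\,\rd\mu(\tau(x))$ and using that topological pressure is constant in $\mu$. Fix such an ergodic $\mu$ together with a subordinate partition $\eta\in\P^{\F}$ whose elements have uniformly small $d^{\F}$-diameter, say bounded by some $\delta>0$, and fix a finite measurable partition $\xi$ chosen adapted to a finite atlas of foliation charts so that each intersection $\xi(y)\cap\F_{\loc}(y)$ has $d^{\F}$-diameter less than a prescribed $\epsilon>0$.

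For $\mu$-a.e.\ $x$, I will work with the conditional measure $\mu_x^{\eta}$, which is supported on $\eta(x)\subset\overline{W^{\F}(x,\delta)}$. Applying the elementary concavity inequality $\sum_i p_i(a_i-\log p_i)\leq \log\sum_i e^{a_i}$ with $p_A=\mu_x^{\eta}(A)$ and $a_A=\mu_x^{\eta}(A)^{-1}\int_A S_n\phi\,\rd\mu_x^{\eta}$, summed over those $A\in\xi_0^{n-1}$ meeting $\eta(x)$, yields the pointwise estimate
\begin{align*}
H_{\mu_x^{\eta}}(\xi_0^{n-1})+\int S_n\phi\,\rd\mu_x^{\eta}\leq \log\sum_{A}\sup_{y\in A\cap\eta(x)}e^{S_n\phi(y)}.
\end{align*}
Selecting one representative $y_A$ per nonempty intersection yields a finite set in $\overline{W^{\F}(x,\delta)}$ that, by the choice of $\xi$, is $(n,\epsilon)$ $\F$-separated up to a bounded multiplicity constant. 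Integrating over $x$, using the identity $\int H_{\mu_x^{\eta}}(\xi_0^{n-1})\,\rd\mu(x)=H_\mu(\xi_0^{n-1}|\eta)$ together with Jensen's inequality applied to $\log$, I obtain
\begin{align*}
H_\mu(\xi_0^{n-1}|\eta)+\int S_n\phi\,\rd\mu\leq \log\Bigl(\sup_{x\in M} N^{\F}_d(\overline{W^{\F}(x,\delta)},n,\epsilon)\Bigr)+C,
\end{align*}
where $C$ absorbs the multiplicity constant.

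Dividing by $n$, sending $n\to\infty$, and invoking Lemma \ref{smalldelta} together with the definition of $P^{\F}_{\operatorname{top}}$, this gives $h_\mu(f,\xi|\eta)+\int\phi\,\rd\mu\leq P^{\F}_{\operatorname{top}}(f,\phi)+o_\epsilon(1)$; taking suprema first over $\xi$ (as $\epsilon\downarrow 0$) and then over $\eta\in\P^{\F}$ concludes the argument in the $C^{1+\alpha}$ setting. The $C^1$ partially hyperbolic statement is covered by Theorem A of \cite{HWZ21} as noted, and is in fact recovered by the same scheme once the subordinate partition is supplied by Proposition 2.6 of \cite{HW24}.

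The main obstacle will be the geometric step justifying that the representatives $\{y_A\}$ form an $(n,\epsilon)$ $\F$-separated subset of $\overline{W^{\F}(x,\delta)}$ with respect to $d^{\F}_n$, rather than merely with respect to the ambient $d_n$. Since each leaf is a submanifold on which $d\leq d^{\F}$, a partition element $A\in\xi$ of small $d$-diameter need not have small $d^{\F}$-diameter inside a given leaf, and $A\cap\F(x)$ may a priori break into several plaques. The standard remedy is to build $\xi$ from the tiles of a finite atlas of foliation charts so that every element intersects any single local plaque in a set of controlled intrinsic diameter; arranging in addition that $\mu(\partial\xi)=0$ so that the conditional entropy identities remain valid is where most of the technical bookkeeping will reside.
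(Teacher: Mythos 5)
The reduction to ergodic measures, the use of the concavity inequality (Lemma \ref{lem:inequality}), and the general Misiurewicz-style strategy are all shared with the paper. However, there is a genuine gap in the crucial geometric step, and you yourself flag it at the end: to compare $H_{\mu_x^{\eta}}(\xi_0^{n-1})$ with $N_d^{\F}(\overline{W^{\F}(x,\delta)},n,\epsilon)$, you need the restricted atoms $A\cap\eta(x)$ with $A\in\xi_0^{n-1}$ to be contained in $\F$-Bowen balls $W^{\F}(\cdot,n,\epsilon)$, i.e.\ you need $d^{\F}_n$-smallness of $A\cap\eta(x)$, while the construction only guarantees $d_n$-smallness. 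The remedy you propose --- choosing $\xi$ adapted to foliation charts so that each element meets any single plaque in a set of small intrinsic diameter --- does not close the gap: the image $f^j(A\cap\eta(x))$ lies in a single $\xi$-element and in the single leaf $f^jW^{\F}(x)$, but that leaf can cross the chart in many plaques, so the intersection can be disconnected with $d^{\F}$-diameter unbounded as $j$ grows. Since $\F$ is expanding, the worst-case $d^{\F}(f^{n-1}z,f^{n-1}z')$ for $z,z'\in A\cap\eta(x)$ is precisely the quantity one cannot control from the ambient estimate $d(f^{n-1}z,f^{n-1}z')<\operatorname{diam}\xi$. (A related difficulty is that partitions of a connected manifold have Lebesgue number zero as covers, so the ``bounded multiplicity'' clause would also need a separate argument.)

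The paper circumvents this entirely by invoking a Brin--Katok-type characterization of the unstable entropy (Lemma \ref{entropy, Bowen ball}, from \cite{HHW17,HW24}): for ergodic $\mu$ and $\eta\in\P^{\F}$,
\begin{align*}
h_\mu^{\F}(f)=\lim_{\epsilon\to 0}\varliminf_{n\to\infty}-\frac{1}{n}\log\mu_x^{\eta}\bigl(W^{\F}(x,n,\epsilon)\bigr),
\end{align*}
which converts the entropy directly into a decay rate for conditional measures of leaf Bowen balls. Once this is in hand, the paper builds a set $E_n$ of ``good'' points where both this decay estimate and the Birkhoff averaging for $\phi$ hold, picks an $(n,\epsilon'/2)$ $\F$-spanning set $F_n$ of $\overline{W^{\F}(x_0,\delta)}\cap E_n$, and applies Lemma \ref{lem:inequality} with $p_i=\mu_{x_0}^{\eta}(W^{\F}(y(z),n,\epsilon'))$ rather than with partition masses; at no point does one need to pass from $d_n$-control to $d^{\F}_n$-control. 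So the missing ingredient in your argument is precisely Lemma \ref{entropy, Bowen ball}: either cite and use it as the paper does, or supply a genuine proof that $\xi_0^{n-1}$-atoms restricted to $\eta(x)$ have controlled $d^{\F}_n$-diameter, which I do not believe follows from the chart-adapted construction alone.
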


The proof of the above result is almost identical to Proposition 4.6 in \cite{HWZ21}.  In the following we present a
 complete proof for the reader’s convenience.

We begin by introducing two auxiliary lemmas. 

\begin{lemma}[\cite{HWZ21} Lemma 4.5]\label{lem:inequality}
Suppose that $0 \leq p_1, \ldots, p_m \leq 1$, $s = p_1 + \cdots + p_m$ and $a_1, \ldots, a_m \in \mathbb{R}$. Then
\[
\sum_{i=1}^m p_i (a_i - \log p_i) \leq s \left( \log \sum_{i=1}^m e^{a_i} - \log s \right).
\]
\end{lemma}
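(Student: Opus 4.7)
The plan is to reduce the stated inequality to the classical Gibbs inequality (equivalently, the non-negativity of Kullback--Leibler divergence) by first normalizing the weights $(p_i)$ to a probability vector, and then invoking Jensen's inequality applied to $\log$.

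First I would dispose of the degenerate case $s=0$: every $p_i$ vanishes, so with the standard convention $0\log 0 = 0$ the left-hand side is $0$ and the right-hand side is also $0$ (the factor $s$ outside the bracket kills it). So assume $s>0$ and set $q_i := p_i/s$, making $(q_i)$ a probability vector. Using $\log p_i = \log s + \log q_i$ on the indices where $q_i>0$, a direct algebraic expansion gives
\begin{align*}
\sum_{i=1}^m p_i\bigl(a_i - \log p_i\bigr) - s\Bigl(\log\textstyle\sum_i e^{a_i} - \log s\Bigr)
= s\Bigl(\sum_i q_i a_i - \sum_i q_i\log q_i - \log\sum_i e^{a_i}\Bigr),
\end{align*}
so the inequality is equivalent to the normalized form $\sum_i q_i a_i - \sum_i q_i\log q_i \leq \log\sum_i e^{a_i}$ for an arbitrary probability vector $(q_i)$.

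To establish this reduced inequality, I would introduce the Gibbs probability vector $r_i := e^{a_i}/Z$, where $Z := \sum_j e^{a_j}$. Applying Jensen's inequality to the concave function $\log$ yields
\begin{align*}
\sum_{i:\, q_i>0} q_i \log\frac{r_i}{q_i} \;\leq\; \log \sum_{i:\, q_i>0} q_i\cdot\frac{r_i}{q_i} \;\leq\; \log \sum_i r_i \;=\; 0.
\end{align*}
Since $\log r_i = a_i - \log Z$, rearranging the leftmost sum produces exactly $\sum_i q_i a_i - \sum_i q_i \log q_i \leq \log Z$, which is the reduced inequality.

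There is no substantial obstacle here: the estimate is a textbook instance of the Gibbs variational formula, and the only technical care required is handling indices with $p_i = 0$ via the convention $0\log 0 = 0$, which is the same convention tacitly used elsewhere in the paper when computing information functions and entropies.
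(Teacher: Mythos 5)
Your proof is correct: the normalization $q_i=p_i/s$, the algebraic identity reducing the claim to $\sum_i q_i a_i-\sum_i q_i\log q_i\leq\log\sum_i e^{a_i}$, and the Jensen/Gibbs step all check out, and the $s=0$ case is handled properly with the convention $0\log 0=0$. The paper itself gives no proof (it cites \cite{HWZ21}, Lemma 4.5, where the argument is the same classical normalization-plus-concavity-of-$\log$ inequality going back to Walters' proof of the variational principle), so your argument is essentially the standard one and there is nothing further to compare.
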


\begin{lemma}[\cite{HHW17} Corollary 3.2, \cite{HW24} Corollary B.1]\label{entropy, Bowen ball}
Let $f:M\to M$ be a $C^{1+\alpha}$ diffeomorphism on a closed manifold $M$ 
and let $\mu\in\mathcal{M}_e(f,M)$. Then for any $\eta\in \mathcal{P}^{\F}$, 
\begin{align*}
h_\mu^{\F}(f)&=\lim_{\epsilon\to 0}\varliminf_{n\to\infty} -\frac{1}{n}\log \mu_x^\eta(W^{\F}(x,n,\epsilon))\\
&=\lim_{\epsilon\to 0}\varlimsup_{n\to\infty} -\frac{1}{n}\log \mu_x^\eta(W^{\F}(x,n,\epsilon)). 
\end{align*}
Furthermore, for any $C^1$ partially hyperbolic diffeomorphism $f$, 
the above results hold when $\F$ denotes the unstable foliation of $f$. 
\end{lemma}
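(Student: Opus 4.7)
The plan is to prove, for $\mu$-a.e. $x\in M$, the pair of inequalities
$$\limsup_{n\to\infty}-\tfrac{1}{n}\log\mu_x^\eta\bigl(W^\F(x,n,\epsilon)\bigr)\leq h_\mu^\F(f)\quad\text{and}\quad h_\mu^\F(f)\leq \liminf_{n\to\infty}-\tfrac{1}{n}\log\mu_x^\eta\bigl(W^\F(x,n,\epsilon)\bigr)+o_\epsilon(1),$$
where $o_\epsilon(1)\to 0$ as $\epsilon\to 0$. Since $\liminf\leq\limsup$ trivially, sending $\epsilon\to 0$ squeezes both quantities to the common value $h_\mu^\F(f)$ and proves the two equalities simultaneously.

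For the upper bound on $\limsup$, the easy direction, I choose a finite measurable partition $\xi$ of $M$ whose atoms have $d^\F$-diameter at most $\epsilon$ on every leaf (such a partition is built from a finite atlas of foliation charts, using continuity of $\F$ and mollifying the boundaries to achieve $\mu(\partial\xi)=0$). If $y\in\xi_0^{n-1}(x)\cap\eta(x)$, then at each iterate $0\leq j\leq n-1$ the points $f^jx$ and $f^jy$ lie on a common $\F$-leaf and share the same $\xi$-atom, hence $d^\F(f^jx,f^jy)<\epsilon$. Thus $\xi_0^{n-1}(x)\cap\eta(x)\subset W^\F(x,n,\epsilon)$ and
$$-\tfrac{1}{n}\log\mu_x^\eta\bigl(W^\F(x,n,\epsilon)\bigr)\leq -\tfrac{1}{n}\log\mu_x^\eta\bigl(\xi_0^{n-1}(x)\bigr).$$
A Shannon-McMillan-Breiman theorem for the increasing sequence of partitions $\{\eta\vee\xi_0^{n-1}\}$ with respect to the conditional measures $\{\mu_x^\eta\}$ gives pointwise a.e. convergence of the right-hand side to $h_\mu(f,\xi|\eta)\leq h_\mu(f|\eta)\leq h_\mu^\F(f)$, which yields the claim.

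For the lower bound on $\liminf$, the hard direction, I would run a Katok-type covering argument along leaves. Suppose toward contradiction that $\liminf_n -\tfrac{1}{n}\log\mu_x^\eta\bigl(W^\F(x,n,\epsilon)\bigr)<h_\mu^\F(f)-\delta$ on a set $A$ of positive $\mu$-measure, for some fixed $\delta>0$. Then for each $x\in A$ there exists a subsequence $n_k\to\infty$ with $\mu_x^\eta\bigl(W^\F(x,n_k,\epsilon)\bigr)\geq e^{-(h_\mu^\F(f)-\delta)n_k}$. Apply a Vitali-type covering lemma on the leaf $\F(x)$ to extract from $\{W^\F(y,n_k,\epsilon)\}_{y\in A\cap\eta(x)}$ a finite sub-cover of $A\cap\eta(x)$ of cardinality at most $e^{(h_\mu^\F(f)-\delta)n_k+o(n_k)}$. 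Combined with a fine global partition $\xi$ of mesh less than $\epsilon/3$, each Bowen ball meets a bounded number of atoms of $\xi_0^{n_k-1}$, forcing an upper bound of $h_\mu^\F(f)-\delta/2$ on $h_\mu(f,\xi|\eta)$ after choosing $\xi$ refined enough that $h_\mu(f,\xi|\eta)>h_\mu^\F(f)-\delta/3$, which is the desired contradiction.

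The principal obstacle is the lower direction. The classical Brin-Katok proof takes place in a compact ambient metric space equipped with a global invariant probability measure, whereas here the ambient object is the leaf $\F(x)$, or rather its subset $\eta(x)$, equipped with the conditional measure $\mu_x^\eta$. Two difficulties arise: first, the induced Riemannian geometry on leaves varies measurably with $x$ and is not uniformly bounded, so a uniform-in-$x$ leaf-wise Vitali covering lemma must be established, exploiting continuity of $\F$ and the $C^1$-leaf structure; second, the conditional measures $\{\mu_x^\eta\}$ are defined only $\mu$-a.e. and their compatibility with $f$ is mediated through the increasing nature of $f^{-n}\eta$ rather than strict $f$-invariance of $\eta$. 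Ergodicity of $\mu$ is essential in the final step, where Birkhoff's theorem upgrades pointwise a.e. estimates along orbits to the global entropy identity.
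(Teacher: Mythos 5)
The paper itself offers no proof of this lemma: it is quoted verbatim from \cite{HHW17} (Corollary 3.2) and \cite{HW24} (Corollary B.1), so your attempt can only be measured against the standard arguments in those references. Your overall architecture does match them (easy direction: atoms of a refined partition intersected with $\eta(x)$ sit inside leafwise Bowen balls, then a conditional SMB theorem; hard direction: a covering/counting argument), but two of your key steps do not hold as stated. First, a finite partition of $M$ whose atoms have $d^{\F}$-diameter at most $\epsilon$ \emph{on every leaf} cannot exist: leaves of an unstable/expanding foliation are typically dense, so any atom with nonempty interior meets a given leaf in infinitely many plaques whose mutual leafwise distances are unbounded. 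The inclusion $\xi_0^{n-1}(x)\cap\eta(x)\subset W^{\F}(x,n,\epsilon)$ therefore cannot be read off from "same atom at each time"; it requires the standard induction along the orbit, using that $y\in\eta(x)\subset W^u_{\loc}(x)\cap\F_{\loc}(x)$, a bound on leafwise expansion per step, and the uniform comparability of ambient and leafwise distances inside foliation charts (and, in the $C^{1+\alpha}$ nonuniform case, tempered estimates on Pesin blocks, which is precisely what \cite{HW24} supplies).

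Second, and more seriously, the hard direction collapses at the claim that "each Bowen ball meets a bounded number of atoms of $\xi_0^{n_k-1}$." A leafwise Bowen ball of radius $\epsilon$ can meet up to $K_0$ atoms of $\xi$ at \emph{each} of the $n_k$ time steps, hence up to $K_0^{n_k}$ atoms of $\xi_0^{n_k-1}$, with $\log K_0$ bounded away from $0$ when the mesh of $\xi$ is comparable to $\epsilon$. Your covering of $A\cap\eta(x)$ by $e^{(h_\mu^{\F}(f)-\delta)n_k+o(n_k)}$ Bowen balls then only yields $h_\mu(f,\xi|\eta)\leq h_\mu^{\F}(f)-\delta+\log K_0$, which is no contradiction. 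The missing idea is exactly the crux of the Brin--Katok lower bound: fix $\xi$ with $\mu(\partial\xi)$-small neighborhoods and count only those itineraries that differ from the itinerary of $x$ at the (low-density) times when the orbit falls near $\partial\xi$, obtaining an $e^{o(n)}$ bound via a Stirling-type estimate; alternatively one argues through Katok's entropy formula with $(n,\epsilon)$-separated sets and the ergodic theorem. Related loose ends -- the Vitali extraction for Bowen balls (which are not metric balls of a doubling metric), the $x$-dependence of the subsequence $n_k$ on a positive-measure set, and the a.e.\ constancy of the $\liminf/\limsup$ under mere ergodicity -- are fixable, but without the itinerary-counting step the proposed contradiction argument does not go through.
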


We now ready to prove Proposition \ref{pro:hmuleqpressure}. 

\begin{proof}[Proof of Proposition \ref{pro:hmuleqpressure}]
By Lemma \ref{decomposition}, it suffices to prove this proposition for ergodic measures. 
Thus, without loss of generality, we assume that $\mu$ is ergodic.

Fix $\rho>0$. Take $\eta\in \P^{\F}$, 
according to Lemma \ref{entropy, Bowen ball}, 
for $\mu$ almost every point $x$, there exists $\epsilon_0(x)>0$, 
such that if $0<\epsilon<\epsilon_0(x)$, then 
\begin{align*}
\varliminf_{n \to \infty}-\frac{1}{n}\log\mu_x^\eta(W^{\F}(x,n,\epsilon))
\geq h_\mu^{\F}(f) -\frac{\rho}{2}.
\end{align*}
We choose a small $\epsilon'>0$ such that 
$\mu\left(B_{\epsilon'}\right)>1-\frac{\rho}{2}$, 
where $ B_{\epsilon'}:=\left\{x:\epsilon_0(x)>\epsilon'\right\}$.
For $\mu$-a.e. $x\in B_{\epsilon'}$, 
there exists $N(x)=N(x)>0$ such that for any $n\geq N(x)$, 
\begin{align}\label{e:estimate}
\mu_x^\eta(W^{\F}(x,n,\epsilon')) \leq e^{-n\left(h_\mu^{\F}(f)-\rho\right)}, \quad 
\frac{1}{n}(S_n\phi)(y)\geq \int_M \phi \,\rd\mu-\rho. 
\end{align}
For each $n\geq 1$, we define $E_n:=E_n(\epsilon')=\left\{x\in B_{\epsilon'}, N(x)\leq n\right\}$. 
Note that $\{E_n\}_{n\geq 1}$ forms an increasing sequence of sets. 
Obviously, there exists a positive integer $N_0$ such that $\mu\left(E_{N_0}\right)>\mu\left(B_{\epsilon'}\right)-\frac{\rho}{2}>1-\rho$. 
Fix $n>N_0$, there exists $x_0\in E_{n}$ such that
$$
\mu_{x_0}^\eta(E_{n})=\mu_{x_0}^\eta(E_{n}\cap \eta(x_0))>1-\rho.
$$
 Since the fact $y \in \eta(x_0)$ implies $\mu_y^\eta=\mu_{x_0}^\eta$, we obtain  
\begin{align}\label{e:entropyestimate}
\mu_{x_0}^\eta\left(W^{\F}(y,n, \epsilon')\right) \leq e^{-n(h_\mu^{\F}(f)-\rho)}, \quad
\forall y\in E_{n}\cap \eta(x_0).
\end{align}
We take $\delta>0$ such that $W^u(x_0, \delta)\supset \eta(x_0)$. 
Let $F_n$ be an $\left(n,\frac{\epsilon'}{2}\right)$ $\F$-spanning set of $\overline{W^{\F}(x, \delta)}\cap E_{n}$ 
satisfying $W^{\F}\left(z,n,\frac{\epsilon'}{2}\right)\cap \eta(x_0) \cap E_{n} \neq \varnothing$ for all $z\in F_n$ and 
\begin{align*}
\overline{W^{\F}(x_0,\delta)}\cap E_n \subset &\bigcup_{z\in F_n}W^{\F}\left(z,n,\frac{\epsilon'}{2}\right).
\end{align*}
For each $z\in F_n$, we choose $y(z)\in W^{\F}\left(z,n,\frac{\epsilon'}{2}\right)\cap \eta(x_0)\cap E_{n}$. 
We have 
\begin{align}\label{e:vpesti}
\begin{aligned}
1-\rho& <\mu_{x_0}^\eta(E_{n})\leq \mu_{x_0}^\eta(\overline{W^u(x_0,\delta)}\cap E_{n})\leq \mu_{x_0}^\eta\left(\bigcup_{z\in F_n}W^{\F}\left(z,n,\frac{\epsilon'}{2}\right)\right)\\
&\leq \sum_{z\in F_n}\mu_{x_0}^\eta\left(W^{\F}\left(z,n,\frac{\epsilon'}{2}\right)\right)\leq \sum_{z\in F_n}\mu_{x_0}^\eta\left(W^{\F}\left(y(z),n,\epsilon'\right)\right).
\end{aligned}
\end{align}
Combining \eqref{e:estimate} and \eqref{e:entropyestimate}, 
and applying Lemma \ref{lem:inequality} with constants $p_i=\mu_x^\eta(B_{n}^u(y(z),\e))$, $a_i=(S_n\phi)(y(z))$, 
we obtain
\begin{align*}
\begin{aligned}
&\sum_{z\in F_n}\mu_{x_0}^\eta\left(W^{\F}(y(z),n,\epsilon')\right)\left(n\left(\int_M \phi \,\rd\mu-\rho\right)+n(h_\mu^{\F}(f)-\rho)\right)\\
\leq &\sum_{z\in F_n}\mu_{x_0}^\eta\left(W^{\F}(y(z),n,\epsilon')\right)((S_n\phi)(y(z))-\log \mu_{x_0}^\eta\left(W^{\F}(y(z),n,\epsilon')\right))\\
\leq &\left(\sum_{z\in F_n}\mu_{x_0}^\eta\left(W^{\F}(y(z),n,\epsilon')\right)\right)  \cdot\\
&\quad\quad\left(\log \sum_{z\in F_n}\exp((S_n\phi)(y(z)))-\log \sum_{z\in F_n}\mu_{x_0}^\eta\left(W^{\F}(y(z),n,\epsilon')\right)\right).
\end{aligned}
\end{align*}
Combining \eqref{e:vpesti} with above inequality, we derive 
\begin{align}
\begin{aligned}\label{e:estim}
&n\left(\int_M \phi \,\rd\mu-\rho\right)+n(h_\mu^{\F}(f)-\rho)\\
\leq &\log \sum_{z\in F_n}\exp((S_n\phi)(y(z)))-\log \sum_{z\in F_n}\mu_{x_0}^\eta\left(W^{\F}(y(z),n,\epsilon')\right)\\
\leq &\log \sum_{z\in F_n}\exp((S_n\phi)(y(z)))-\log (1-\rho).
\end{aligned}
\end{align}
We define $\tau_{\epsilon'}:=\{|\phi(x)-\phi(y)|:d(x,y)\leq \epsilon'\}$. 
Therefore for each $z\in F_n$, we have $\exp((S_n\phi)(y(z)))\leq \exp((S_n\phi)(z)+n\tau_{\epsilon'})$. 
Dividing by $n$ and taking the limit superior with respect to $n$ on both sides of \eqref{e:estim}, 
by Lemma \ref{smalldelta}, we derive
\begin{align*}
\int_M \phi \,\rd\mu+h_\mu^{\F}(f)-2\rho & \leq \varlimsup_{n\to \infty}\frac{1}{n}\log \sum_{z\in F}\exp((S_n\phi)(y(z)))\\
&\leq \varlimsup_{n\to \infty}\frac{1}{n}\log \sum_{z\in F_n}\exp((S_n\phi)(z))+\tau_{\epsilon'}\leq P_{\text{top}}^{\F}(f, \phi)+\tau_{\epsilon'}. 
\end{align*}
Since $\rho>0$ is arbitrary and $\tau_\e \to 0$ as $\e \to 0$, we conclude 
$$
\int_M \phi \,\rd\mu+h_\mu^{\F}(f) \leq P_{\text{top}}^{\F}(f, \phi).
$$
Consequently, we finish the proof of this proposition. 
\end{proof}

In fact, the corresponding variational principle also holds. 
As for the proof, the argument from Theorem A in \cite{HWZ21} can be directly applied here. 
Since the conclusion is not required in this paper, we omit the detailed proof.

\begin{definition}\label{def,Fequilibriumstate}
A member $\mu\in \mathcal{M}(f,M)$ is called an {\bf $\F$-equilibrium state} for $\phi$ if 
\begin{align*}
P^{\F}_{\operatorname{top}}(f,\phi)=h_{\mu}^{\F}(f)+\int_M \phi \,\rd\mu. 
\end{align*}
\end{definition}

  \section{A construction of $\F$-equilibrium states}\label{The construction of F-equilibrium states}

Inspired by the work in \cite{PP22-1}, 
we formulate the following result, 
which is related to $\F$-equilibrium states.

  \begin{theorem}\label{existence F-es}
  Let $f: M \to M$ be a $C^{1+\alpha}$ diffeomorphism or a $C^1$ partially hyperbolic diffeomorphism on a closed Riemannian manifold $M$. 
  
  Given an expanding foliation $\mathcal{F}$ and a a continuous function  $\phi:M \to \mathbb{R}$, $x\in M$ and $\delta>0$, 
  consider probability measures $ \left(\lambda_{n}\right)_{n=1}^{\infty} $ 
  supported on $W^{\mathcal{F}}(x,\delta)$ 
  and absolutely continuous with respect to the induced volume $ \lambda_{W^{\mathcal{F}}(x,\delta)} $ with densities
  \begin{align*}
  \frac{\rd \lambda_{n}}{\rd \lambda_{W^{\mathcal{F}}(x,\delta)}}(y):=\frac{\exp \left(S_n(\phi-\Phi^{\F})\left( y\right)\right)}{\int_{W^{\mathcal{F}}(x,\delta)} \exp \left(S_n(\phi-\Phi^{\F})\left(z\right)\right) \,\rd \lambda_{W^{\mathcal{F}}(x,\delta)}(z)}, \quad \text { for } y \in W^{\mathcal{F}}(x,\delta) ,
  \end{align*}
where $\Phi^{\F}(x):=-\log \left| \det\left( Df|E^{\F}_x \right) \right|$ and 
 $E_x^{\F}$ is the bundle induced by the leaf of the foliation $\F$ at $x$. 
  Denote by $\mathcal{M}(x,\delta)$ the set of accumulation points in the weak* topology 
   of the averages 
  \begin{align*}
  \mu_{n}:=\frac{1}{n} \sum_{k=0}^{n-1} f_{*}^{k} \lambda_{n}, \quad n \geq 1. 
  \end{align*}
  Then for any $\mu\in \mathcal{M}(x,\delta)$, we have 
  $P^{\mathcal{F}}_{\operatorname{top}}(f, \phi, \overline{W^u(x,\delta)})\leq h^{\F}_\mu(f)+\int \phi\,\rd \mu$. 
\end{theorem}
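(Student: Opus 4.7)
The plan is to use a Katok--Misiurewicz-type argument, adapted so that the partial topological pressure along $\F$ is controlled by the partial entropy $h^{\F}_\mu(f)$ rather than the full metric entropy. The proof breaks into three steps: (i) a lower bound on the normalizer $Z_n$ in terms of the partial topological pressure, (ii) a partition-theoretic lower bound on $H_{\lambda_n}$, and (iii) a Misiurewicz transfer to the limiting measure $\mu$.

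First I would estimate
\[
Z_n := \int_{W^{\F}(x,\delta)}\exp\!\bigl(S_n(\phi-\Phi^{\F})\bigr)\,\rd\lambda_{W^{\F}(x,\delta)}
\]
from below. The key observation is that $\exp(-S_n\Phi^{\F}(y))=|\det(Df^n|E^{\F}_y)|$ is precisely the Jacobian of $f^n$ along $\F$-leaves, so the change-of-variables formula reads $\int_A e^{-S_n\Phi^{\F}}\,\rd\lambda_{W^{\F}(x,\delta)}=\lambda_{f^n W^{\F}(x,\delta)}(f^n A)$. Let $E_n\subset W^{\F}(x,\delta)$ be a maximal $(n,\epsilon)$ $\F$-separated set, so that the half-radius balls $W^{\F}(y,n,\epsilon/2)$ with $y\in E_n$ are pairwise disjoint. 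Condition (3) in Definition~\ref{def: expanding foliation} gives $f^n W^{\F}(y,n,\epsilon/2)\supset W^{\F}(f^n y,\epsilon/2)\cap f^n W^{\F}(x,\delta)$, which for $y$ not too close to the boundary has Riemannian leaf-volume bounded below by some $c(\epsilon)>0$ independent of $n$ and $y$. Combined with the Birkhoff variation estimate on each ball, this yields
\[
Z_n \;\ge\; c(\epsilon)\,e^{-n\,\operatorname{var}(\phi,\epsilon)}\sum_{y\in E_n}e^{S_n\phi(y)}.
\]
Since maximal separated sets are also $(n,\epsilon)$ $\F$-spanning, taking $\tfrac{1}{n}\log$ and sending $\epsilon\to 0$ yields $\liminf_n \tfrac{1}{n}\log Z_n\ge P^{\F}_{\operatorname{top}}(f,\phi,\overline{W^{\F}(x,\delta)})$.

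Next, I would construct a partition and use it to bound $H_{\lambda_n}$ from below. Fix a finite Borel partition $\alpha$ of $M$ with diameter less than $\epsilon$ and $\mu(\partial\alpha)=0$, together with $\eta\in\P^{\F}_\mu$ subordinate to $W^u\cap\F$. Any atom $A\in\alpha_0^{n-1}$ meeting $W^{\F}(x,\delta)$ lies (up to uniform constants comparing the leaf metric to the ambient metric on small scales) inside a single dynamical $\F$-ball $W^{\F}(y_A,n,\epsilon)$; the upper-bound counterpart of Step~1, using $f^n W^{\F}(y,n,\epsilon)\subset W^{\F}(f^n y,\epsilon)$, gives $\lambda_n(A)\le Z_n^{-1}C(\epsilon)\,e^{n\,\operatorname{var}(\phi,\epsilon)}\,e^{S_n\phi(y_A)}$. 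The entropy identity $H_{\lambda_n}(\alpha_0^{n-1})=-\sum_A \lambda_n(A)\log\lambda_n(A)$ then produces
\[
H_{\lambda_n}(\alpha_0^{n-1}|\eta)\;\ge\;\log Z_n - n\!\int\!\phi\,\rd\lambda_n - n\,\operatorname{var}(\phi,\epsilon)-O(1),
\]
the conditioning on $\eta$ being essentially harmless because $\lambda_n$ is supported on a single $\F$-leaf and $\eta$ is subordinate to leaves. Finally, apply the conditional Misiurewicz lemma: for $1\le q<n$,
\[
q\,H_{\lambda_n}(\alpha_0^{n-1}|\eta)\;\le\;n\,H_{\mu_n}(\alpha_0^{q-1}|\eta)+2q^2\log|\alpha|.
\]
Divide by $nq$, pass to a weak* convergent subsequence $\mu_{n_k}\to\mu$ (using $\mu(\partial\alpha_0^{q-1})=0$ to pass conditional entropies to the limit), and then send $q\to\infty$ to obtain $\liminf_n \tfrac{1}{n}H_{\lambda_n}(\alpha_0^{n-1}|\eta)\le h_\mu(f,\alpha|\eta)\le h^{\F}_\mu(f)$. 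Combining with Steps~1 and 2 and sending $\epsilon\to 0$ gives $P^{\F}_{\operatorname{top}}(f,\phi,\overline{W^{\F}(x,\delta)}) \le h^{\F}_\mu(f) + \int\phi\,\rd\mu$.

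The main obstacle I expect is Step~2, namely producing the conditional entropy bound $H_{\lambda_n}(\alpha_0^{n-1}|\eta)\ge \log Z_n - n\!\int\!\phi\,\rd\lambda_n - o(n)$ in a way that makes the Misiurewicz step yield exactly the partial entropy $h^{\F}_\mu(f)$ rather than the full metric entropy $h_\mu(f)$. The delicate point is that $\lambda_n$ sits on a single $\F$-leaf (so conditioning on a leaf-subordinate $\eta$ is essentially trivial for $\lambda_n$), whereas the pushforwards $f^k_*\lambda_n$ comprising $\mu_n$ spread transversely and interact nontrivially with $\eta$; matching these two regimes requires a careful choice of $\eta$ and control on how the combinatorial cells of $\alpha_0^{n-1}$ intersect nearby leaves, plus an estimate ensuring that the upper volume bound $f^n W^{\F}(y,n,\epsilon)\subset W^{\F}(f^n y,\epsilon)$ really does come with uniformly positive lower volumes for most atoms (handling boundary effects on $W^{\F}(x,\delta)$ separately).
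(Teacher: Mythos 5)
Your proposal follows essentially the same route as the paper: Step~1 is the paper's Proposition~\ref{F-pressure} (characterizing $P^{\F}_{\operatorname{top}}(f,\phi,\overline{W^{\F}(x,\delta)})$ as the growth rate of the normalizer $Z_n$); Step~2 is the paper's Lemma~\ref{Lemma-negative} together with inequalities~\eqref{(4.3)}--\eqref{(4.5)}; Step~3 is the paper's conditional Misiurewicz Lemma~\ref{conditional MM lemma} plus the upper semi-continuity of conditional entropy (Proposition~\ref{muntomu}). The one place where your sketch deviates and where, if carried out literally, it would break down is the form of the conditional Misiurewicz inequality: you write it with the same $\eta$ on both sides,
\[
q\,H_{\lambda_n}(\alpha_0^{n-1}\mid\eta)\le n\,H_{\mu_n}(\alpha_0^{q-1}\mid\eta)+2q^2\log|\alpha|,
\]
but this cannot be proved as stated, since the telescoping step in Misiurewicz's argument requires the conditioning partition $f^{rq}\bigl(\bigvee_{i=0}^{r-1}f^{-iq}\alpha_0^{q-1}\vee f^j\eta\bigr)$ to refine the conditioning that appears on the right, and $f^{rq+j}\eta$ has atoms that \emph{grow} inside $\F$-leaves as $rq+j\to\infty$ (so it does \emph{not} refine a fixed $\eta$). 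The paper's fix is to replace $\eta$ on the right-hand side by $f\alpha^{\F}$, where $\alpha^{\F}(x)=\alpha(x)\cap W^{\F}_{\mathrm{loc}}(x)$ is the leafwise refinement of the \emph{same} finite partition $\alpha$; this is exactly the delicate point you flag at the end as the ``main obstacle'', and Lemma~\ref{conditional MM lemma} is what resolves it (the resulting conditional entropy $\frac{1}{q}H_{\mu}(\alpha_0^{q-1}\mid f\alpha^{\F})$ still bounds $h^{\F}_{\mu}(f)$ in the limit $q\to\infty$). A second small gap: passing $H_{\mu_n}(\alpha_0^{q-1}\mid f\alpha^{\F})\to H_{\mu}(\alpha_0^{q-1}\mid f\alpha^{\F})$ is not a routine $\mu(\partial\alpha_0^{q-1})=0$ argument because the conditioning partition is \emph{infinite}; the paper needs the full upper semi-continuity statement of Proposition~\ref{muntomu} (via a martingale approximation of $\b^{\F}$ by finite $\beta_n$ with $\mu(\partial\beta_n)=0$). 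With these two adjustments your argument matches the paper's proof.
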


By combining Lemma \ref{smalldelta}, Proposition \ref{pro:hmuleqpressure} and Theorem \ref{existence F-es}, 
  we can estabilish the existence of the $\F$-equilibrium state 
  by considering appropriate values of $x$. 
  This result provides a partial answer to Question \ref{question 1}. 
  
  \begin{corollary}\label{existence F-es-corollary}
    If there exists $x\in M$ such that 
    $P^{\mathcal{F}}_{\operatorname{top}}(f, \phi)= P^{\mathcal{F}}_{\operatorname{top}}(f, \phi, \overline{W^u(x_i,\delta)})$, 
   then every point in $\mathcal{M}(x,\delta)$ is an $\F$-equilibrium state. 
  \end{corollary}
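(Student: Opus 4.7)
The plan is to adapt the Parmenter--Pollicott strategy \cite{PP22-1} to the foliated setting, obtaining the bound on $P^{\F}_{\operatorname{top}}(f,\phi,\overline{W^u(x,\delta)})$ in three steps, centered on the normalizing constant
\[
Z_n := \int_{W^{\F}(x,\delta)} \exp\bigl(S_n(\phi-\Phi^{\F})(z)\bigr)\,\rd\lambda_{W^{\F}(x,\delta)}(z).
\]

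\textbf{Step 1 (Normalizer bounds the pressure from below).} The crucial observation is that $-S_n\Phi^{\F}(z)=\log|\det(Df^n|E^{\F}_z)|$ is precisely the Jacobian of $f^n$ along $\F$-leaves, so by the leafwise change-of-variables formula
\begin{equation*}
\int_{A}e^{-S_n\Phi^{\F}(z)}\,\rd\lambda_{W^{\F}(x,\delta)}(z)=\lambda_{f^nA}(f^nA)
\end{equation*}
for any measurable $A\subset W^{\F}(x,\delta)$. Applied to Bowen cells $A=W^{\F}(y,n,\epsilon)\cap W^{\F}(x,\delta)$, the image $f^n A$ contains an $\F$-ball of radius $\approx\epsilon$, whose volume is bounded below by some $C(\epsilon)>0$ uniformly in $y,n$ (by continuity and compactness of the foliation). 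Combining with the bounded-distortion estimate $|S_n\phi(y)-S_n\phi(z)|\le n\gamma(\epsilon)$ for $z\in W^{\F}(y,n,\epsilon)$, where $\gamma(\epsilon)\to 0$, summing over any $(n,\epsilon)$ $\F$-separated set $E\subset\overline{W^u(x,\delta)}$ yields
\begin{equation*}
Z_n \;\geq\; C(\epsilon)\,e^{-n\gamma(\epsilon)}\sum_{y\in E}e^{S_n\phi(y)},
\end{equation*}
and therefore $\varliminf_{n\to\infty}\tfrac{1}{n}\log Z_n\geq P^{\F}_{\operatorname{top}}(f,\phi,\overline{W^u(x,\delta)})$.

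\textbf{Step 2 (Entropy of $\lambda_n$ against a Bowen partition).} Construct a measurable partition $\mathcal{Q}_n^\epsilon$ of $W^{\F}(x,\delta)$ by $(n,\epsilon)$ Bowen cells along $\F$. By the explicit density formula and the same Jacobian computation,
\begin{equation*}
\lambda_n(\mathcal{Q}_n^\epsilon(y))\;\leq\;\frac{C(\epsilon)\,e^{n\gamma(\epsilon)}\,e^{S_n\phi(y)}}{Z_n}.
\end{equation*}
Taking $-\log$, integrating against $\lambda_n$, and using $\int S_n\phi\,\rd\lambda_n=n\int\phi\,\rd\mu_n$,
\begin{equation*}
\tfrac{1}{n}H_{\lambda_n}(\mathcal{Q}_n^\epsilon)+\int\phi\,\rd\mu_n\;\geq\;\tfrac{1}{n}\log Z_n-\gamma(\epsilon)-\tfrac{1}{n}\log C(\epsilon).
\end{equation*}

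\textbf{Step 3 (Misiurewicz along the unstable foliation).} Fix $\eta\in \mathcal{P}^{\F}_\mu$ subordinate to $W^u_{\operatorname{loc}}\cap\F_{\operatorname{loc}}$ with $\mu(\partial\eta)=0$, chosen so that $W^{\F}(x,\delta)$ lies in a single $\eta$-plaque, and a finite Borel partition $\xi$ of $M$ with $\mu(\partial\xi)=0$ and $\operatorname{diam}(\xi)<\epsilon/2$. The cells of $\xi_0^{n-1}$ restricted to $\eta(x)$ refine $\mathcal{Q}_n^\epsilon$ up to boundary cells whose entropy contribution is $o(n)$, giving $H_{\lambda_n}(\xi_0^{n-1})\geq H_{\lambda_n}(\mathcal{Q}_n^\epsilon)-o(n)$. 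Because $\lambda_n$ is supported on a single $\eta$-plaque, $H_{\lambda_n}(\cdot)=H_{\lambda_n}(\cdot|\eta)$, so the conditional Misiurewicz inequality
\begin{equation*}
\tfrac{q}{n}H_{\lambda_n}(\xi_0^{n-1}|\eta)\;\le\;H_{\mu_n}(\xi_0^{q-1}|\eta)+\tfrac{2q^2}{n}\log\#\xi
\end{equation*}
applies. Passing to the subsequence along which $\mu_n\rightharpoonup\mu$, using upper semicontinuity of $\nu\mapsto H_\nu(\xi_0^{q-1}|\eta)$ at $\mu$ (guaranteed by the boundary-free choices of $\xi$ and $\eta$), then letting $q\to\infty$, we obtain
\begin{equation*}
\varlimsup_{n\to\infty}\tfrac{1}{n}H_{\lambda_n}(\xi_0^{n-1})\;\le\;h_\mu(f,\xi|\eta)\;\le\;h^{\F}_\mu(f).
\end{equation*}
Chaining Steps 1--3 and letting $\epsilon\to 0$ finishes the proof.

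\textbf{Main obstacle.} The hardest part will be Step 3: verifying that the finite partition $\xi$ of $M$ interacts correctly with the leafwise Bowen partition $\mathcal{Q}_n^\epsilon$, so that the boundary cells along the direction transverse to $\F$ contribute only $o(n)$ to the entropy. This relies on bounded distortion, which is available from $C^{1+\alpha}$ regularity in the nonuniformly hyperbolic case and from uniform Jacobian control in the $C^1$ partially hyperbolic case. One must also justify the conditional Misiurewicz step with $\eta$ subordinate (not $f$-invariant) and $\lambda_n$ non-invariant, and check that the weak-$*$ limit in $H_{\mu_n}(\xi_0^{q-1}|\eta)$ is valid thanks to the boundary-free choice of $\xi,\eta$.
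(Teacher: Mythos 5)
The corollary's own proof in the paper is a one-line logical deduction: Theorem \ref{existence F-es} gives $P^{\F}_{\operatorname{top}}(f,\phi,\overline{W^u(x,\delta)})\le h^{\F}_\mu(f)+\int\phi\,\rd\mu$ for every $\mu\in\mathcal{M}(x,\delta)$, Proposition \ref{pro:hmuleqpressure} gives the reverse bound $h^{\F}_\mu(f)+\int\phi\,\rd\mu\le P^{\F}_{\operatorname{top}}(f,\phi)$, and the hypothesis of the corollary identifies the two pressures, so all inequalities become equalities. Your proposal instead re-derives the content of Theorem \ref{existence F-es} from scratch and then stops. Chaining your Steps 1--3 yields only $P^{\F}_{\operatorname{top}}(f,\phi,\overline{W^u(x,\delta)})\le h^{\F}_\mu(f)+\int\phi\,\rd\mu$; nowhere do you invoke the corollary's hypothesis $P^{\F}_{\operatorname{top}}(f,\phi)=P^{\F}_{\operatorname{top}}(f,\phi,\overline{W^u(x,\delta)})$, and nowhere do you use (or reprove) the reverse inequality $h^{\F}_\mu(f)+\int\phi\,\rd\mu\le P^{\F}_{\operatorname{top}}(f,\phi)$. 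Without both of these the argument does not show equality $P^{\F}_{\operatorname{top}}(f,\phi)=h^{\F}_\mu(f)+\int\phi\,\rd\mu$, which is what being an $\F$-equilibrium state means (Definition \ref{def,Fequilibriumstate}). This is the essential missing step.

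Beyond that, your Step 3 is not a faithful replication of the paper's mechanism. The version of Misiurewicz you write keeps the same partition $\eta$ on both sides: $\tfrac{q}{n}H_{\lambda_n}(\xi_0^{n-1}|\eta)\le H_{\mu_n}(\xi_0^{q-1}|\eta)+\tfrac{2q^2}{n}\log\#\xi$. In the paper's Lemma \ref{conditional MM lemma} the conditioning partition on the right-hand side is $f\beta^{\F}$, not $\eta$. This matters: $\eta$ is a measurable partition subordinate to $W^u_{\loc}\cap\F_{\loc}$ whose conditional structure is tied to a single plaque of $\lambda_n$, whereas $\mu_n=\tfrac{1}{n}\sum f^k_*\lambda_n$ spreads mass across many plaques, and the standard telescoping in the Misiurewicz argument produces $f^{rq+j}\eta$-type conditionings that are then dominated uniformly by $f\beta^{\F}$ (because $\eta$ refines $\beta^{\F}$ inside each leaf and $f$ preserves $\F$). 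You flag this as the ``main obstacle'' but your stated inequality simply asserts it resolved; you need to reproduce (or cite) the actual Lemma \ref{conditional MM lemma} together with the upper semicontinuity of $\nu\mapsto H_\nu(\xi_0^{q-1}|f\beta^{\F})$ (Proposition \ref{muntomu}) to make Step 3 go through.
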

  
An important consideration is whether the equality
$$ 
P^{\mathcal{F}}_{\text{top}}(f, \phi) = P^{\mathcal{F}}_{\text{top}}(f, \phi, \overline{W^{\mathcal{F}}(x,\delta)})
$$
holds uniformly for all $x\in M$ in certain special cases. 
This is closely related to Question \ref{question 1} and Question \ref{question 2} as previously mentioned. 
If this equality holds, the measures obtained from arbitrary choices of $x$ and $\delta$ in 
Theorem \ref{existence F-es} will automatically be $\mathcal{F}$-equilibrium states.

  However, achieving such uniformity requires the system to exhibit a strong type of mixing property, 
  which particularly challenging for the expanding foliation 
  whose dimension is lower than that of the unstable manifold. 
  In next section, we will discuss this situation in detail.

Before presenting the proof of this theorem, 
we will first discuss some properties of unstable topological pressure.

  Since $\phi: M \rightarrow \mathbb{R}$ is a continuous function on the compact manifold $M$, 
  the following lemma holds. 
  
  \begin{lemma}\label{potential lemma}
  Given $\tau>0$, there exists $\epsilon>0$ such that for all any $n\in\mathbb{N}$
   and $y, z \in M$ with $d_n\left(y, z\right)< \epsilon$, we have $\left|S_n\phi\left(y\right)-S_n\phi(z)\right| < n \tau$.
  \end{lemma}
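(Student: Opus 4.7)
The plan is to reduce the claim to uniform continuity of $\phi$ on the compact manifold $M$. Since $M$ is compact and $\phi:M\to\mathbb{R}$ is continuous, $\phi$ is uniformly continuous. So for the given $\tau>0$, I can produce an $\epsilon>0$ such that $d(a,b)<\epsilon$ implies $|\phi(a)-\phi(b)|<\tau$ for all $a,b\in M$.

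Next I would unpack the definition of the Bowen metric. The hypothesis $d_n(y,z)<\epsilon$ means by definition that $d(f^i y, f^i z)<\epsilon$ for every $0\le i\le n-1$. Applying the uniform continuity bound at each iterate gives $|\phi(f^i y)-\phi(f^i z)|<\tau$ for each such $i$.

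Finally I would sum over $i$ using the triangle inequality:
\[
|S_n\phi(y)-S_n\phi(z)|\;\le\;\sum_{i=0}^{n-1}|\phi(f^iy)-\phi(f^iz)|\;<\;n\tau,
\]
which is exactly the stated conclusion. There is no genuine obstacle here; the statement is essentially a rephrasing of uniform continuity along an orbit segment, and the only thing to be careful about is to choose $\epsilon$ depending solely on $\tau$ (via the modulus of continuity of $\phi$) and not on $n$, which is automatic since compactness of $M$ gives a single modulus that works uniformly in the base point.
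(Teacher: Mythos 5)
Your proof is correct and is exactly what the paper has in mind: the authors state the lemma with only the remark that it holds because $\phi$ is continuous on a compact manifold, and your argument (uniform continuity, then summing the $n$ per-iterate bounds along the orbit) is the standard one-line justification they are implicitly invoking.
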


  In fact, partial topological entropy can be regarded as the asymptotic rate of 
  orbit divergence along the expanding foliation. 
  For any $x\in M$ and $\delta>0$, 
  we use $ \lambda_{W^{\mathcal{F}}(x,\delta)} $ to denote the volume on $W^{\mathcal{F}}(x,\delta)$ induced by the Riemannina structure 
on leaves of $\F$.

  \begin{proposition}\label{F-pressure}
   For any $x\in M$, $\delta>0$ and continuous function $\phi: M\to\mathbb{R}$, we have 
  \begin{align*}
    P^{\mathcal{F}}_{\operatorname{top}}(f, \phi, \overline{W^{\F}(x,\delta)})
    &=\varlimsup_{n\to\infty}\frac{1}{n}\log \left(\int_{f^n W^{\F}_\delta(x)} e^{S_n\phi(f^{-n} y)}\,\rd \lambda_{f^n W^{\F}_\delta(x)}(y)\right)\\
    &= \varlimsup_{n \rightarrow+\infty} \frac{1}{n} \log \left(\int_{W^{\F}_\delta(x)}  e^{S_n \left(\phi-\Phi^{\F}\right)\left(y\right)} \,\rd \lambda_{W^{\F}_\delta(x)}(y)\right). 
  \end{align*}
  \end{proposition}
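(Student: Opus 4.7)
Our plan is to prove the two equalities separately: the second one by a direct change of variables along the leaves of $\F$, and the first by standard spanning-set and separated-set comparisons.

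For the second equality, we note that $f^n$ restricts to a $C^1$ diffeomorphism from $W^{\F}(x,\delta)$ onto $f^n W^{\F}(x,\delta)$, whose leafwise Jacobian is $\left|\det\left(Df^n|E^{\F}_z\right)\right|$. Applying the chain rule together with the definition $\Phi^{\F}(z)=-\log\left|\det\left(Df|E^{\F}_z\right)\right|$ yields $\log\left|\det\left(Df^n|E^{\F}_z\right)\right|=-S_n\Phi^{\F}(z)$, so substituting $y=f^n z$ converts $\rd\lambda_{f^n W^{\F}_\delta(x)}(y)$ into $e^{-S_n\Phi^{\F}(z)}\rd\lambda_{W^{\F}_\delta(x)}(z)$ while replacing $S_n\phi(f^{-n}y)$ by $S_n\phi(z)$, yielding the identity at once.

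Let $I_n$ denote the common integral. For the upper bound $\varlimsup\frac{1}{n}\log I_n \leq P^{\F}_{\operatorname{top}}(f,\phi,\overline{W^{\F}(x,\delta)})$, we fix $\tau>0$ and use Lemma \ref{potential lemma} applied to both continuous potentials $\phi$ and $\Phi^{\F}$ to obtain $\epsilon>0$ such that $d_n^{\F}(z,z')<\epsilon$ forces both $|S_n\phi(z)-S_n\phi(z')|\leq n\tau$ and $|S_n\Phi^{\F}(z)-S_n\Phi^{\F}(z')|\leq n\tau$. Choose an $(n,\epsilon)$ $\F$-spanning set $E=\{w_i\}$ for $\overline{W^{\F}(x,\delta)}$ with $\sum_i e^{S_n\phi(w_i)}$ nearly attaining $S^{\F}_d(\overline{W^{\F}(x,\delta)},n,\epsilon)$. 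Running the change of variables above on each dynamical ball produces the volume bound $\lambda_{W^{\F}(x,\delta)}(W^{\F}(w_i,n,\epsilon))\leq K_\epsilon\,e^{S_n\Phi^{\F}(w_i)+n\tau}$, where $K_\epsilon$ is a uniform upper bound on the volume of an $\F$-ball of radius comparable to $\epsilon$ (finite because the leaves are $C^1$ and $M$ is compact). Combining this with the Birkhoff comparison gives $I_n\leq K_\epsilon e^{3n\tau}\sum_i e^{S_n\phi(w_i)}$; taking $\frac{1}{n}\log$, then $n\to\infty$ followed by $\epsilon,\tau\to 0$, closes this direction.

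For the reverse inequality, we take a maximal $(n,\epsilon)$ $\F$-separated set $F=\{z_i\}$ whose weighted sum is within a factor of two of $N^{\F}_d(\overline{W^{\F}(x,\delta)},n,\epsilon)$. The dynamical balls $W^{\F}(z_i,n,\epsilon/2)$ are then pairwise disjoint and, modulo boundary effects absorbed by a slight shrinkage of $\delta$, lie in $W^{\F}(x,\delta)$, so $I_n$ dominates the sum of the individual integrals. The pointwise estimate $e^{S_n(\phi-\Phi^{\F})(z)}\geq e^{S_n(\phi-\Phi^{\F})(z_i)-2n\tau}$ on each such ball is immediate. The main obstacle we anticipate lies in the matching volume lower bound $\lambda_{W^{\F}(x,\delta)}(W^{\F}(z_i,n,\epsilon/2))\geq k_\epsilon\,e^{S_n\Phi^{\F}(z_i)-n\tau}$; to secure it, we run the change of variables in reverse and argue that $f^n W^{\F}(z_i,n,\epsilon/2)$ contains an $\F$-ball of some fixed radius $r=r(\epsilon)>0$ around $f^n z_i$. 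This last inclusion should follow by unwinding the expanding property of $\F$ (condition (3) of Definition \ref{def: expanding foliation}) together with the continuity of $Df|E^{\F}$, which guarantees that backward iterates of a small $\F$-ball at time $n$ remain inside the dynamical $(\epsilon/2)$-ball. Combining the two sides and letting $\tau,\epsilon\to 0$ then completes the first equality.
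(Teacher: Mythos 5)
Your proof of the second equality (the leafwise change of variables giving $\log\bigl|\det(Df^n|E^{\F}_z)\bigr|=-S_n\Phi^{\F}(z)$) is exactly the paper's argument. Where you diverge is in establishing the first equality, and the divergence matters.

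You work entirely in the source leaf $W^{\F}(x,\delta)$ and try to estimate the $\lambda$-volume of the dynamical balls $W^{\F}(z_i,n,\epsilon)$ in terms of $e^{S_n\Phi^{\F}(z_i)}$. The paper instead chooses the maximal $\epsilon$-nets \emph{inside the image} $f^n W^{\F}_\delta(x)$ and pulls them back by $f^{-n}$. By the definition of an $(n,\epsilon)$ $\F$-spanning/separated set and property (3) of Definition \ref{def: expanding foliation}, the preimages of a static $\epsilon$-net in the image form an $(n,\epsilon)$ $\F$-spanning set, and the preimages of a statically $\epsilon$-separated set form an $(n,\epsilon)$ $\F$-separated set. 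The payoff is that all the comparisons then happen on \emph{static} $\F$-balls $W^{\F}(x_i,\epsilon/4)$ and $W^{\F}(x_i,\epsilon)$ sitting inside $f^n W^{\F}_\delta(x)$, whose volumes are uniformly bounded above and below by the constants $L$ and $c_1$ coming from Lemma \ref{volume lemma}. No estimate of the volume of a dynamical Bowen ball is ever needed.

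This matters because the step you flag as ``the main obstacle'' — the lower bound $\lambda\bigl(W^{\F}(z_i,n,\epsilon/2)\bigr)\geq k_\epsilon e^{S_n\Phi^{\F}(z_i)-n\tau}$, equivalently that $f^n W^{\F}(z_i,n,\epsilon/2)$ contains an $\F$-ball of fixed radius — is not closed by the argument you sketch. Continuity of $Df|E^{\F}$ does not give it: without some uniformity you cannot control how much the dynamical ball may have been pinched against the last iterate. What actually does the job is the covering-level content of property (3) (roughly: $f^n W^{\F}(f^{-n}w,n,\epsilon)\supset W^{\F}(w,\epsilon)\cap f^n W^{\F}(x,\delta)$), and extracting exactly that from the as-stated covering formulation, plus handling the boundary of $f^n W^{\F}_\delta(x)$ which you yourself flag, requires a nontrivial further argument. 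The paper sidesteps all of this by estimating directly in the image. So your strategy is plausibly completable, but as written it has a genuine gap at precisely the point you identify; the paper's pullback trick is the cleaner way through.
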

  
  \begin{proof}
For any positive integer $n$, applying the change of variables 
$$
f^n: W^{\F}_\delta(x)\to f^n\left( W^{\F}_\delta(x)\right), 
$$
we obtain 
\[
\int_{f^n W^{\F}_\delta(x)} e^{S_n\phi(f^{-n} y)}\,\rd \lambda_{f^n W^{\F}_\delta(x)}(y)
    = \int_{W^{\F}_\delta(x)}  e^{S_n \left(\phi-\Phi^{\F}\right)\left(y\right)} \,\rd \lambda_{W^{\F}_\delta(x)}(y). 
\]
Thus, it suffices to prove the first equality in this proposition. 
  
  Given $\tau>0$, 
  there exists $\epsilon=\epsilon(\delta)>0$ with $\epsilon<\delta$ such that the conclusion of 
   Lemma \ref{potential lemma} is satisfied. 
  
  Fix $n\in\mathbb{N}$, we choose a maximal set 
  $S=\left\{x_{1}, \cdots, x_{N}\right\}\subset f^{n} W^{\F}_\delta(x)$ such that
  \[
    W^{\F}\left(x_{i}, \frac{\epsilon}{4}\right) \cap W^{\F}\left(x_{j}, \frac{\epsilon}{4}\right)=\varnothing \quad \text{for} \quad i \neq j, \, 1\leq i,j\leq N
  \]
  and
  \[
    \bigcup_{i=1}^{N} W^{\F}\left(x_{i}, \frac{1}{4}\epsilon\right)\subset f^{n} \left(W^{\F}(x,\delta)\right), \quad f^{n} \left(\overline{W^{\F}(x,\delta) }\right)\subset \bigcup_{i=1}^{N} W^{\F}\left(x_{i}, \epsilon\right).
  \]
  By property (3) of Definition \ref{def: expanding foliation}, 
the set $f^{-n}S:=\left\{f^{-n}x_{1}, \cdots, f^{-n}x_{N}\right\}$ forms an $(n,\epsilon)$ $\F$-spanning set for 
  $ \overline{W^{\F}(x,\delta) }$.  
  Combining this with Lemma \ref{potential lemma}, we have 
  \begin{align*}
  S^{\F}_d(\overline{W^{\mathcal{F}}(x,\delta)}, n, \epsilon)  \leq& \sum_{i=1}^{N} e^{S_n \phi\left(f^{-n}x_{i}\right)} \\
   \leq& \sum_{i=1}^{N} \frac{e^{n\tau}}{\lambda_{f^{n}W^{\F}_\delta(x)}\left(W^{{\F}}\left(f^n x_{i}, \frac{\epsilon}{4}\right)\right)} \int\limits_{W^{\F}\left(f^n x_{i}, \frac{\epsilon}{4}\right)}  e^{S_n \phi\left(f^{-n}z\right)} \,\rd \lambda_{f^{n} W^{\F}_\delta(x)}(z)\\
  \leq&  \frac{1}{c_1} e^{n \tau} \int\limits_{f^{n} W^{\F}_\delta(x)} e^{S_n\phi\left(f^{-n} z\right)} \,\rd \lambda_{f^{n}W^{\F}_\delta(x)}(z), 
  \end{align*}
  where $c_1=\inf\limits_{z\in M} \lambda_{W^{\F}_\delta(z)}\left(W^{\F}\left(z, \frac{\epsilon}{4}\right)\right)>0$. 
  So it follows immediately that
  \begin{align}
  P^{\mathcal{F}}_{\text{top}}(f, \phi, \overline{W^{\F}(x,\delta)}) &=\lim _{\epsilon \rightarrow 0} \varlimsup_{n \rightarrow+\infty} \frac{1}{n} \log S^{\F}_d(\overline{W^{\mathcal{F}}(x,\delta)}, n, \epsilon)\nonumber \\
  &\leq \varlimsup_{n \rightarrow+\infty} \frac{1}{n} \log \int\limits_{f^n W^{\F}_\delta(x)} e^{S_n\phi\left(f^{-n} z\right)} \,\rd \lambda_{f^{n}W^{\F}_\delta(x)}(z)+\tau.\label{P-1}
  \end{align} 
  
  On the other hand, fix $n\in\mathbb{N}$, 
  we choose a maximal set $\left\{x_{1}, \cdots, x_{N}\right\}\subset f^{n} W^{\F}_\delta(x)$ such that
  $d^{\F}(x_i,x_j)>\epsilon$ whenever $i\neq j$. 
  Obviously,  points $y_i=f^{-n} x_i$ form an $(n,\epsilon)$ $\F$-separated set. 
  By Lemma \ref{potential lemma}, we have 
  \begin{align*}
  N^{\F}_d(\overline{W^{\mathcal{F}}(x,\delta)}, n, \epsilon) 
  &\geq \sum_{i=1}^N e^{S_n\phi(y_i)}=\sum_{i=1}^N e^{S_n\phi(f^{-n}x_i)}\\
  & \geq \sum_{i=1}^{N} \frac{e^{-n \tau}}{\lambda_{f^nW^{\F}_\delta(x)}\left(W^{\F}\left(x_{i}, \epsilon\right)\right)} \int_{W^{\F}\left(x_{i}, \epsilon\right)} e^{S_n \phi\left(f^{-n} z\right)} \,\rd \lambda_{f^n W^{\F}_\delta(x)}(z) \\
    &\geq \frac{e^{-n \tau}}{L} \int_{ f^n W^{\F}_\delta(x)} e^{S_n \phi\left(f^{-n} z\right)} \,\rd \lambda_{f^n W^{\F}_\delta(x)}(z) ,
    \end{align*}
  where $L=\sup\limits_{z\in M} \lambda_{W^{\F}_\delta(z)}\left(W^{\F}\left(z, \epsilon\right)\right)>0$. 
    Thus, we conclude 
  \begin{align}
  P^{\mathcal{F}}_{\text{top}}(f, \phi, \overline{W^{\F}(x,\delta)})&=\lim_{\epsilon\to 0}\varlimsup_{n \rightarrow+\infty} \frac{1}{n} \log N^{\F}_d(\overline{W^{\mathcal{F}}(x,\delta)}, n, \epsilon)\nonumber \\
  & \geq \varlimsup _{n \rightarrow+\infty} \frac{1}{n} \log \int_{f^{n} W^{\F}_\delta(x)} e^{\phi^{n}\left(f^{-n} z\right)} \,\rd \lambda_{f^n W^{\F}_\delta(x)}(z)-\tau.\label{P-2}
  \end{align}
 By letting $\tau\to 0$ in \eqref{P-1} and \eqref{P-2}, 
  we finish the proof of this proposition. 
  \end{proof}

  \begin{corollary}
    Given $\delta>0$ and a continuous function $\phi: M\to\mathbb{R}$,  we have 
  \begin{align*}
    P^{\mathcal{F}}_{\text{top}}(f, \phi)
    =\sup_{x\in M}\varlimsup_{n\to\infty}\frac{1}{n}\log \left(\int_{f^n W^{\F}_\delta(x)} e^{S_n\phi(f^{-n} y)}\,\rd \lambda_{f^n W^{\F}_\delta(x)}\right).
  \end{align*}
  \end{corollary}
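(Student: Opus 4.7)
The plan is to derive this corollary as an immediate consequence of the two preceding results, Lemma \ref{smalldelta} and Proposition \ref{F-pressure}. First I would invoke Lemma \ref{smalldelta}, which asserts that for every $\delta>0$,
\begin{align*}
P^{\F}_{\operatorname{top}}(f,\phi)=\sup_{x\in M}P^{\F}_{\operatorname{top}}(f,\phi,\overline{W^{\F}(x,\delta)}).
\end{align*}
This step eliminates the $\delta$-dependence ambiguity in the definition and reduces the problem to a pointwise identity for each localized partial pressure.

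Next, I would apply Proposition \ref{F-pressure} to each localized quantity appearing on the right-hand side. That proposition provides the integral representation
\begin{align*}
P^{\F}_{\operatorname{top}}(f,\phi,\overline{W^{\F}(x,\delta)})=\varlimsup_{n\to\infty}\frac{1}{n}\log\int_{f^n W^{\F}_\delta(x)} e^{S_n\phi(f^{-n}y)}\,\rd \lambda_{f^n W^{\F}_\delta(x)}(y),
\end{align*}
valid for each fixed $x\in M$. Substituting this representation into the identity from Lemma \ref{smalldelta} and taking the supremum over $x\in M$ on both sides yields exactly the claimed formula.

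No substantive obstacle arises here: the corollary is essentially bookkeeping that combines the $\delta$-independence from Lemma \ref{smalldelta} with the integral representation from Proposition \ref{F-pressure}. The one detail worth recording is that the supremum over $x$ commutes trivially with the pointwise equality furnished by Proposition \ref{F-pressure}, since that equality holds separately for every $x\in M$. Hence the proof collapses to substituting one identity into another, and no genuinely new analytic estimate is required.
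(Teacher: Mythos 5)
Your proposal is correct and takes the same route the paper intends: the corollary follows by substituting the integral representation from Proposition \ref{F-pressure} into the $\sup_{x\in M}$ that defines $P^{\F}_{\text{top}}(f,\phi)$, with Lemma \ref{smalldelta} ensuring the result is independent of the chosen $\delta$. The paper gives no separate argument because the statement is exactly this bookkeeping.
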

  
For other studies of unstable volume growth in partially hyperbolic systems, 
we refer the reader to  \cite{HSX08,HHW17}.

We now present the following results.

\begin{proposition}[\cite{HW24} Proposition 5.1]\label{entropy property}
Suppose that $\mu=a\mu_1+(1-a)\mu_2$ where $\mu_1,\mu_2\in \mathcal{M}(M)$ and $0<a<1$. 
Let $\eta$ be a measurable partition with respect to $\mu$ and $\xi$ is a finite 
or countable measurable partitio with finite entropy for $\mu$, then 
$$
aH_{\mu_1}(\xi|\eta)+(1-a)H_{\mu_2}(\xi|\eta)\leq  H_{\mu}(\xi|\eta). 
$$

\end{proposition}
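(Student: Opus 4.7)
The plan is to reduce the claim to the concavity of $\phi(t) = -t\log t$ (with $\phi(0)=0$) applied atom by atom, after disintegrating all three measures along $\eta$. Let $\pi\colon M\to M/\eta$ be the canonical projection and write $\bar\mu := \pi_*\mu$ and $\bar\mu_i := \pi_*\mu_i$ for $i=1,2$. Since $0<a<1$, both $\bar\mu_i$ are absolutely continuous with respect to $\bar\mu$; set $\bar\rho_i := d\bar\mu_i/d\bar\mu$, so that $a\bar\rho_1+(1-a)\bar\rho_2 = 1$ $\bar\mu$-a.e.

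Next I would identify the conditional measures. From $\mu_i = \int \mu_{i,c}^\eta\, d\bar\mu_i(c) = \int \bar\rho_i(c)\mu_{i,c}^\eta\, d\bar\mu(c)$ together with $\mu = a\mu_1 + (1-a)\mu_2$, the uniqueness part of Rokhlin's disintegration theorem gives
\begin{align*}
\mu_c^\eta \;=\; a\bar\rho_1(c)\,\mu_{1,c}^\eta + (1-a)\bar\rho_2(c)\,\mu_{2,c}^\eta \qquad\text{for } \bar\mu\text{-a.e.\ } c,
\end{align*}
which is a genuine convex combination of probability measures because $a\bar\rho_1(c)+(1-a)\bar\rho_2(c)=1$.

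Then I would apply concavity of $\phi$ at each atom $A\in\xi$:
\begin{align*}
\phi\bigl(\mu_c^\eta(A)\bigr) \;\geq\; a\bar\rho_1(c)\,\phi\bigl(\mu_{1,c}^\eta(A)\bigr) + (1-a)\bar\rho_2(c)\,\phi\bigl(\mu_{2,c}^\eta(A)\bigr).
\end{align*}
Summing over $A\in\xi$ and integrating against $\bar\mu$, the left-hand side becomes $H_\mu(\xi|\eta)$, while the identity $\bar\rho_i\,d\bar\mu = d\bar\mu_i$ converts the right-hand side into $aH_{\mu_1}(\xi|\eta)+(1-a)H_{\mu_2}(\xi|\eta)$, giving exactly the desired inequality.

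The only delicate step is the identification of $\mu_c^\eta$ as a convex combination of $\mu_{1,c}^\eta$ and $\mu_{2,c}^\eta$; this leans on the uniqueness clause of Rokhlin's disintegration theorem together with the absolute continuity $\bar\mu_i\ll\bar\mu$, which is precisely where the hypothesis $0<a<1$ is used. Integrability in the final step is automatic: $H_\mu(\xi)<\infty$ forces $H_{\mu_i}(\xi)<\infty$ by the (easier, unconditional) concavity of entropy, so all sums converge and Fubini applies.
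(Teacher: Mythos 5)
Your argument is correct. The paper itself states this as Proposition 5.1 of \cite{HW24} without reproducing a proof, so there is no in-paper argument to compare against; but the reasoning you give — disintegrate all three measures along $\eta$, observe that $\mu_c^\eta$ is a bona fide convex combination of $\mu_{1,c}^\eta$ and $\mu_{2,c}^\eta$ with coefficients $a\bar\rho_1(c)$ and $(1-a)\bar\rho_2(c)$ summing to one $\bar\mu$-a.e., apply concavity of $t\mapsto -t\log t$ atomwise in $\xi$, and undo the density when integrating — is sound. The absolute continuity $\bar\mu_i\ll\bar\mu$ is exactly where $0<a<1$ enters (via $a\mu_1\leq\mu$ and $(1-a)\mu_2\leq\mu$), the uniqueness clause of the disintegration theorem correctly forces the identification of $\mu_c^\eta$, and the integrability remark (unconditional concavity gives $H_{\mu_i}(\xi)<\infty$, hence $H_{\mu_i}(\xi|\eta)<\infty$) is adequate. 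A closely related but slightly more elementary route, which sidesteps invoking the full disintegration machinery, is to first prove the inequality when $\eta$ is a finite partition by the same concavity computation applied to the ratios $\mu(A\cap B)/\mu(B)$, and then pass to a general measurable $\eta$ by taking an increasing sequence of finite partitions $\eta_n\nearrow\eta$ and using the martingale convergence $H_\nu(\xi|\eta_n)\to H_\nu(\xi|\eta)$ (this is the mechanism behind Lemma~\ref{martingale lemma}); the two approaches differ only in whether the limiting step is carried out via Rokhlin disintegration or via monotone approximation of $\eta$. Either way the content of the proof is the single application of concavity, which you have isolated correctly.
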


\begin{lemma}[\cite{HHW17} Lemma 2.6]\label{HHW17-Lemma 2.6}
  If $\alpha$ and $\gamma$ are measurable partitions, we have 
  \begin{align*}
  H_\mu(\alpha_0^{n-1}|\gamma)=H_\mu(\alpha|\gamma)+\sum_{i=1}^{n-1}
  H_\mu(\alpha | f^i(\alpha_0^{i-1}\vee \gamma)). 
  \end{align*}
  \end{lemma}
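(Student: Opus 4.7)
The plan is to prove this by (i) applying the standard chain rule for conditional entropy iteratively to the join $\alpha_0^{n-1} = \bigvee_{i=0}^{n-1} f^{-i}\alpha$, and (ii) using the $f$-invariance of $\mu$ (implicit in the setup, since $\mu \in \mathcal{M}(f,M)$) to rewrite each term via the push-forward identity $H_\mu(f^{-i}\eta \mid f^{-i}\xi) = H_\mu(\eta \mid \xi)$.

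First, I would invoke the elementary chain rule for conditional entropy (listed among the basic properties of $H_\mu$): for any measurable partitions $\eta_1, \eta_2, \xi$,
\[
H_\mu(\eta_1 \vee \eta_2 \mid \xi) = H_\mu(\eta_1 \mid \xi) + H_\mu(\eta_2 \mid \eta_1 \vee \xi).
\]
Iterating this with $\eta_1 = \alpha_0^{i-1}$ and $\eta_2 = f^{-i}\alpha$ for $i = 1, \dots, n-1$, and using $\alpha_0^{n-1} = \alpha_0^{n-2} \vee f^{-(n-1)}\alpha$, I obtain the telescoping identity
\[
H_\mu\!\left(\alpha_0^{n-1} \mid \gamma\right)
= H_\mu(\alpha \mid \gamma) + \sum_{i=1}^{n-1} H_\mu\!\left(f^{-i}\alpha \,\middle|\, \alpha_0^{i-1} \vee \gamma\right).
\]

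Next, I would convert each summand into the desired form. Because $\mu$ is $f$-invariant, for any measurable partitions $\eta$ and $\xi$ and any integer $i \geq 0$ one has the pull-back/push-forward identity
\[
H_\mu\!\left(f^{-i}\eta \,\middle|\, f^{-i}\xi\right) = H_\mu(\eta \mid \xi),
\]
which follows at once from $\mu_x^{f^{-i}\xi}(f^{-i}\eta(y)) = \mu_{f^i x}^{\xi}(\eta(f^i y))$ and the change of variables $x \mapsto f^i x$ in the integral defining $H_\mu$. Applying this with $\eta = \alpha$ and $\xi = f^i(\alpha_0^{i-1} \vee \gamma)$, and noting that $f^{-i}\xi = \alpha_0^{i-1} \vee \gamma$, gives
\[
H_\mu\!\left(f^{-i}\alpha \,\middle|\, \alpha_0^{i-1} \vee \gamma\right)
= H_\mu\!\left(\alpha \,\middle|\, f^{i}(\alpha_0^{i-1} \vee \gamma)\right).
\]
Substituting back into the chain-rule expansion yields exactly the claimed formula.

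The only subtle point is a bookkeeping one: one must verify that $f^{i}(\alpha_0^{i-1} \vee \gamma)$ is well-defined as a (measurable) partition, which holds because $f$ is a homeomorphism (indeed a diffeomorphism) and hence the image of a measurable partition is measurable, and that the invariance identity above is legitimate on the level of conditional measures. Neither is an actual obstacle, so the argument is essentially two lines: chain rule, then $f$-invariance. No deeper machinery is needed.
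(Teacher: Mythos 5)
Your argument is correct, and it is the expected one: the paper offers no proof of this lemma (it is quoted from \cite{HHW17}), and the standard derivation is precisely what you give --- iterate the chain rule $H_\mu(\eta_1\vee\eta_2\mid\xi)=H_\mu(\eta_1\mid\xi)+H_\mu(\eta_2\mid\eta_1\vee\xi)$ along $\alpha_0^{n-1}=\bigvee_{i=0}^{n-1}f^{-i}\alpha$, then transport each summand by $f^i$.

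One caveat is worth recording. The transport step you use, $H_\mu(f^{-i}\alpha\mid f^{-i}\xi)=H_\mu(\alpha\mid\xi)$, genuinely requires $f^i_*\mu=\mu$; for an arbitrary Borel probability measure the change of variables only gives $H_\mu\bigl(f^{-i}\alpha\mid \alpha_0^{i-1}\vee\gamma\bigr)=H_{f^i_*\mu}\bigl(\alpha\mid f^i(\alpha_0^{i-1}\vee\gamma)\bigr)$. So the identity with $H_\mu$ in every summand, as displayed in the statement, does need the invariance you assumed. However, in this paper the lemma is invoked in the proof of Lemma \ref{conditional MM lemma} for measures that are \emph{not} invariant (the measure $\nu$, which in the end is $\lambda_n$ supported on a local leaf), and there the summands are correspondingly written with the pushforward measures $H_{f^{rq+j}_*\nu}(\cdot\mid\cdot)$. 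In other words, the version actually used is the general one that your intermediate computation already establishes, namely $H_\mu(\alpha_0^{n-1}\mid\gamma)=H_\mu(\alpha\mid\gamma)+\sum_{i=1}^{n-1}H_{f^i_*\mu}\bigl(\alpha\mid f^i(\alpha_0^{i-1}\vee\gamma)\bigr)$ for an arbitrary probability measure; you should keep that form rather than specialize it away via invariance, since invariance is unavailable in the application. Your closing remarks (measurability of $f^i(\alpha_0^{i-1}\vee\gamma)$ under an invertible map, validity of the chain rule for measurable partitions) are indeed unproblematic.
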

  
Next, we present an auxiliary lemma that was essentially contained in the proof of Theorem D in \cite{HHW17}. 
Since the original work did not explicitly state this result, 
we provide the complete proof below for the reader's convenience.

  \begin{lemma}[\cite{HHW17}]\label{conditional MM lemma}
    Given a probability measure $\nu$ and $n\in \mathbb{N}$, 
    we define $\mu_n := \dfrac{1}{n}\sum\limits_{i = 0}^{n - 1}f_*^i\nu$. 
    For any $0 < q < n$, $\alpha \in \mathcal{P}$, $\eta \in \mathcal{P}^{\F}$, we have
  \[
  \frac{1}{n}H_{\nu}(\alpha_0^{n - 1}|\eta)\leq \frac{3q}{n}\log \left(\operatorname{Card} \alpha\right)
   +\frac{1}{q}H_{\mu_n}(\alpha_0^{q - 1}|f\alpha^{\F}).
  \]
  where $\alpha^{\F}$ is a partition in $\mathcal{P}^{\F}$ whose elements are given by $\alpha^{\F}(x) = \alpha(x)\cap W^{\F}_{\text{loc}}(x)$. 
  \end{lemma}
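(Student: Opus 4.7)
The plan is to adapt the classical Misiurewicz inequality (recalled as the commented Lemma MM above) to the conditional setting, with the convexity of conditional entropy in the measure (Proposition~\ref{entropy property}) providing the essential new ingredient.

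\emph{Step 1 (Block decomposition).} For each $k\in\{0,\ldots,q-1\}$, set $a(k):=\lfloor(n-k)/q\rfloor$ and write the block decomposition
\[
\alpha_0^{n-1}=\Bigl(\bigvee_{i=0}^{k-1}f^{-i}\alpha\Bigr)\vee\Bigl(\bigvee_{j=0}^{a(k)-1}f^{-(k+jq)}\alpha_0^{q-1}\Bigr)\vee\Bigl(\bigvee_{i=k+a(k)q}^{n-1}f^{-i}\alpha\Bigr).
\]
The outer two blocks contain at most $2q$ atoms of the form $f^{-i}\alpha$, so subadditivity of $H_\nu(\cdot|\eta)$ and the bound $H_\nu(f^{-i}\alpha|\eta)\le\log\operatorname{Card}\alpha$ give, after summing over $k$ and noting that the pairs $(k,j)$ parametrize distinct integers $m=k+jq\in\{0,\ldots,n-1\}$,
\[
qH_\nu(\alpha_0^{n-1}|\eta)\le 2q^2\log\operatorname{Card}\alpha+\sum_{m=0}^{n-1}H_\nu(f^{-m}\alpha_0^{q-1}|\eta).
\]

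\emph{Step 2 (Transfer to pushforward entropies).} The next step is to rewrite each summand on the right as a conditional entropy of the pushforward $f^m_*\nu$ against $f\alpha^\F$. Invertibility of $f$ together with the identity $H_\nu(f^{-m}\xi|f^{-m}\zeta)=H_{f^m_*\nu}(\xi|\zeta)$, and the fact that $\eta\in\mathcal P^\F$ is subordinate to $W^u\cap\F$, let one peel off a single factor of $\alpha$ via $\alpha_0^{q-1}=\alpha\vee f^{-1}\alpha_0^{q-2}$ and compare $\eta$ with $\alpha^\F$ inside a single $\F$-leaf. Aggregating the mismatch over the $q$ indices $m=0,\ldots,q-1$ contributes at most $q\log\operatorname{Card}\alpha$, yielding
\[
\sum_{m=0}^{n-1}H_\nu(f^{-m}\alpha_0^{q-1}|\eta)\le q\log\operatorname{Card}\alpha+\sum_{m=0}^{n-1}H_{f^m_*\nu}(\alpha_0^{q-1}|f\alpha^\F).
\]

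\emph{Step 3 (Convexity and cleanup).} By Proposition~\ref{entropy property} applied to the convex combination $\mu_n=\frac{1}{n}\sum_{i=0}^{n-1}f^i_*\nu$,
\[
\sum_{m=0}^{n-1}H_{f^m_*\nu}(\alpha_0^{q-1}|f\alpha^\F)\le nH_{\mu_n}(\alpha_0^{q-1}|f\alpha^\F).
\]
Combining the three displays gives $qH_\nu(\alpha_0^{n-1}|\eta)\le(2q^2+q)\log\operatorname{Card}\alpha+nH_{\mu_n}(\alpha_0^{q-1}|f\alpha^\F)$, and dividing by $qn$ yields
\[
\frac{1}{n}H_\nu(\alpha_0^{n-1}|\eta)\le\frac{2q+1}{n}\log\operatorname{Card}\alpha+\frac{1}{q}H_{\mu_n}(\alpha_0^{q-1}|f\alpha^\F),
\]
which implies the stated bound since $2q+1\le 3q$ whenever $q\ge 1$.

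\emph{Main obstacle.} Step~2 is the technical heart of the proof: it is where one must transfer conditioning from the foliated partition $\eta$ to the dynamically generated partition $f\alpha^\F$. This is precisely where the subordination hypothesis $\eta\in\mathcal P^\F$ is used --- $\eta$-atoms lie inside single $\F$-leaves, so upon one application of $f$ they become comparable with cells of $\alpha^\F$ up to a boundary discrepancy. Handling this one-step mismatch in a measure-theoretic sense is what forces the replacement of Misiurewicz's constant $2q^2$ by $2q^2+q$, and accounts for the appearance of $f\alpha^\F$ (rather than $\alpha^\F$) in the statement.
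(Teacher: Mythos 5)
Your Step 1 is correct: the block decomposition plus subadditivity of conditional entropy does yield
$qH_\nu(\alpha_0^{n-1}|\eta)\le 2q^2\log\operatorname{Card}\alpha+\sum_{m=0}^{n-1}H_\nu(f^{-m}\alpha_0^{q-1}|\eta)$, and Step 3 (convexity via Proposition~\ref{entropy property}) is fine. But Step 2 is a genuine gap, and it is precisely where your proof diverges from what actually works. Writing $H_\nu(f^{-m}\alpha_0^{q-1}|\eta)=H_{f^m_*\nu}(\alpha_0^{q-1}|f^m\eta)$, you need to replace the conditioning $f^m\eta$ by $f\alpha^\F$ with a \emph{total} error of only $q\log\operatorname{Card}\alpha$ across all $n$ terms. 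There is no reason for $f^m\eta$ to refine $f\alpha^\F$: the atoms of $\eta$ are small leaf pieces, so under $f^m$ they are stretched by the expanding foliation, and for large $m$ the partition $f^m\eta$ is grossly coarser in the $\F$-direction than $f\alpha^\F$. Hence $H_{f^m_*\nu}(\alpha_0^{q-1}|f^m\eta)$ can exceed $H_{f^m_*\nu}(\alpha_0^{q-1}|f\alpha^\F)$ by an amount of order $\log\operatorname{Card}\alpha_0^{q-1}\sim q\log\operatorname{Card}\alpha$ for \emph{each} of these $m$, not merely for $q$ of them. The heuristic of ``peeling off one $\alpha$'' and ``comparing inside a leaf'' gives no mechanism for bounding these discrepancies.

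What the paper (following [HHW17]) does instead is avoid plain subadditivity within each block and use the conditional chain rule (Lemma~\ref{HHW17-Lemma 2.6}). Applied to the $q$-block with $f^q$ playing the role of $f$, it produces terms of the form
\begin{equation*}
H_{f^{rq+j}_*\nu}\Bigl(\alpha_0^{q-1}\Big|\,f^{rq}\bigl(\textstyle\bigvee_{i<r}f^{-iq}\alpha_0^{q-1}\vee f^j\eta\bigr)\Bigr).
\end{equation*}
The conditioning here is strictly finer than $f^{rq+j}\eta$, because it also carries the dynamical refinement $\bigvee_{i<r}f^{-iq}\alpha_0^{q-1}$; the top slot of that refinement, once pushed forward by $f^{rq}$, contributes $f\alpha$, and together with the subordination of $\eta$ to $\F$ this forces the conditioning partition to refine $f\alpha^\F$. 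That is the step that lets one lower-bound the conditioning by $f\alpha^\F$ uniformly in $r$ and $j$, with only the $j=0$ boundary terms absorbed as a $q\log\operatorname{Card}\alpha_0^{q-1}\le q^2\log\operatorname{Card}\alpha$ correction. Your subadditive Step 1 throws away exactly these dynamical factors, so the quantity you are trying to control in Step 2 is already too large, and the asserted bound does not follow. The missing ingredient is Lemma~\ref{HHW17-Lemma 2.6} (the chain rule), not a clever re-count of the ``mismatch''.
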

  
  \begin{proof}
  For any integers $n,\,j$ with $n > q$ and $0\leq j\leq q-1$, we set $a(j) := [\frac{n - j}{q}]$, where $[a]$ represents the integer part of $a$. 
  Then we have
  \[
  \bigvee_{i = 0}^{n - 1}f^{-i}\alpha
  =\bigvee_{r = 0}^{a(j) - 1}f^{-(rq + j)}\alpha_0^{q - 1} \vee \bigvee_{t \in S_j}f^{-t}\alpha,
  \]
  where $S_j := \{0, 1, \cdots, j - 1\} \cup \{j + qa(j), \cdots, n - 1\}$.
  Thus, we derive 
  \begin{align*}
  H_{\nu}(\alpha_0^{n - 1}|\eta)
  &=H_{\nu}\left(\left.\bigvee_{r = 0}^{a(j) - 1}f^{-(rq + j)}\alpha_0^{q - 1} \vee
     \bigvee_{t \in S_j}f^{-t}\alpha\right|\eta\right)\\
  &\leq \sum_{t \in S_j}H_{\nu}(f^{-t}\alpha|\eta)
   +H_{\nu}\left(\left.\bigvee_{r = 0}^{a(j) - 1}f^{-rq - j}\alpha_0^{q - 1}\right|\eta\right).
  \end{align*}
  By using Lemma \ref{HHW17-Lemma 2.6}, we obtain
  \begin{align*}
  &H_{\nu}\left(\left.\bigvee_{r = 0}^{a(j) - 1}f^{-rq - j}\alpha_0^{q - 1}\right|\eta\right)=
  H_{f^j_*\nu}\left(\left.\bigvee_{r=0}^{a(j)-1}f^{-rq}\alpha_0^{q-1}\right|f^j\eta\right)\\
  = &H_{f^j_*\nu}(\alpha_0^{q-1}|f^j\eta)
  + \sum_{r=1}^{a(j)-1}H_{f^{rq+j}_*\nu}\Big(\alpha_0^{q-1}
   \Big|f^{rq}\big(\bigvee_{i=0}^{r-1}f^{-iq}\alpha_0^{q-1}\vee f^j\eta\big) \Big)\\
  \le&  H_{f^j_*\nu}(\alpha_0^{q-1}|f^j\eta)
  + \sum_{r=1}^{a(j)-1}H_{f^{rq+j}_*\nu}(\alpha_0^{q-1}|f\a^{\F}).
  \end{align*}
This implies that 
  \begin{align*}
  H_{\nu}(\alpha_0^{n - 1}|\eta)\leq \sum_{t \in S_j}H_{\nu}(f^{-t}\alpha|\eta)
  + H_{f^j_*\nu}\left(\left.\alpha_0^{q - 1}\right|f^j\eta\right)
  + \sum_{r = 1}^{a(j) - 1}H_{f^{rq + j}_*\nu}(\alpha_0^{q - 1}|f\alpha^{\F}).
  \end{align*}
  Obviously, the cardinality of $ S_j$ is no more than $2q$.  
  Summing the above inequality over $j$ from $0$ to $q - 1$, 
  by Proposition \ref{entropy property}, we have
  \begin{align*}
  qH_{\nu}(\alpha_0^{n - 1}|\eta)\leq &\sum_{j = 0}^{q - 1} \sum_{t \in S_j}H_{\nu}(f^{-t}\alpha|\eta)
     +\sum_{j = 0}^{q - 1}H_{f^j_*\nu}(\alpha_0^{q - 1}|f^j\eta)
     +\sum_{i = 0}^{n - 1}H_{f^i\nu}(\alpha_0^{q - 1}|f\alpha^{\F})\\
  \leq &2q^2\log \left(\operatorname{Card} \alpha\right)
  +q \log \left(\operatorname{Card} \alpha_0^{q - 1}\right)
   +nH_{\mu_n}(\alpha_0^{q - 1}|f\alpha^{\F})\\
   \leq &  3q^2\log \left(\operatorname{Card} \alpha\right)
   +nH_{\mu_n}(\alpha_0^{q - 1}|f\alpha^{\F}).
  \end{align*} 
 This finishes the proof. 
  \end{proof}
  
 We refer the reader to \cite{Gl03, EW11} for detailed background on martingale theory in dynamical systems, and a concise overview can be found in \cite{HHW17}. The following result will be useful for our purposes:


\begin{lemma}[\cite{Gl03} Theorem 14.28]\label{martingale lemma}
Let \( \alpha \in \mathcal{P} \) and \( \{\zeta_n\} \) be a sequence of increasing measurable partitions with \( \zeta_n \nearrow \zeta \). Then for any Borel probability measures $\mu$, 
 \(\displaystyle \lim_{n \to \infty} H_\mu(\alpha|\zeta_n) = H_\mu(\alpha|\zeta) \). 
\end{lemma}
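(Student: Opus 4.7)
The plan is to reduce the lemma to the classical Doob martingale convergence theorem via the information function identity. First, since $\alpha\in\mathcal{P}$ is a finite partition, I would write
\begin{align*}
H_\mu(\alpha|\zeta_n)=-\sum_{A\in\alpha}\int_M g_n^A(x)\,\log g_n^A(x)\,\rd\mu(x),
\end{align*}
where $g_n^A(x):=\mu_x^{\zeta_n}(A)$ is the conditional probability of $A$ along the atom $\zeta_n(x)$, and write the analogous identity with $g^A(x):=\mu_x^\zeta(A)$ for the entropy $H_\mu(\alpha|\zeta)$. Because the sum is finite, it suffices to show, for each fixed $A\in\alpha$, that $\int_M\psi(g_n^A)\,\rd\mu\to\int_M\psi(g^A)\,\rd\mu$, where $\psi(t):=-t\log t$ with the convention $\psi(0)=0$.

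The second step identifies $g_n^A$ with the conditional expectation $\mathbb{E}_\mu[\mathbf{1}_A\mid\mathcal{B}_n]$, where $\mathcal{B}_n$ denotes the $\sigma$-algebra of $\zeta_n$-saturated Borel sets; this identification uses the characterising properties of the disintegration $\{\mu_x^{\zeta_n}\}$ recalled in the preliminaries of the paper. Since $\zeta_n\nearrow\zeta$, the filtration $(\mathcal{B}_n)_n$ is increasing and its join, modulo $\mu$-null sets, coincides with the $\sigma$-algebra $\mathcal{B}_\infty$ associated to $\zeta$. Doob's martingale convergence theorem applied to the bounded martingale $(g_n^A)_n$ then yields convergence both $\mu$-a.e.\ and in $L^1(\mu)$ to $\mathbb{E}_\mu[\mathbf{1}_A\mid\mathcal{B}_\infty]$, which agrees $\mu$-a.e.\ with $g^A$.

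The third step is a straightforward dominated convergence argument. Since $\psi$ is continuous on $[0,1]$, the $\mu$-a.e.\ convergence $g_n^A\to g^A$ gives $\psi(g_n^A)\to\psi(g^A)$ $\mu$-a.e. Moreover $\psi$ is uniformly bounded on $[0,1]$ by $1/e$, so the constant $1/e$ serves as an integrable dominant on the probability space $(M,\mu)$. Lebesgue dominated convergence then gives $\int_M\psi(g_n^A)\,\rd\mu\to\int_M\psi(g^A)\,\rd\mu$, and summing over the finitely many $A\in\alpha$ produces the desired equality $\lim_n H_\mu(\alpha|\zeta_n)=H_\mu(\alpha|\zeta)$.

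The main subtlety is the identification in the second step: one must verify that the $\sigma$-algebra generated by $\bigcup_n\mathcal{B}_n$ equals $\mathcal{B}_\infty$ modulo $\mu$-null sets, because the disintegration attached to a measurable partition is only determined $\mu$-a.e. This is the only nontrivial measure-theoretic point; once it is settled (via the standard generating-partition characterisation of conditional expectation used in Glasner's textbook), the remainder of the proof is routine. Note that no continuity of $\mu$, no finiteness of total entropy $H_\mu(\zeta)$, and no structure on the underlying space beyond being a standard Borel space with a Borel probability measure, is required.
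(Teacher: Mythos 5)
Your proof is correct: the identity $H_\mu(\alpha|\eta)=\sum_{A\in\alpha}\int_M \psi\bigl(\mu_x^{\eta}(A)\bigr)\,\rd\mu(x)$ with $\psi(t)=-t\log t$, the increasing-martingale convergence of $\mu_x^{\zeta_n}(A)=\mathbb{E}_\mu[\mathbf{1}_A\mid\mathcal{B}_n]$ to $\mu_x^{\zeta}(A)$, and bounded convergence (since $0\le\psi\le 1/e$ on $[0,1]$ and $\alpha$ is finite) yield the claim, while the identification of $\sigma\bigl(\bigcup_n\mathcal{B}_n\bigr)$ with the $\sigma$-algebra of $\zeta$-saturated sets modulo $\mu$-null sets is precisely the meaning of $\zeta_n\nearrow\zeta$ for measurable partitions. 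The paper gives no proof of this lemma, citing Glasner's Theorem 14.28 instead, and your argument is essentially the standard martingale proof found there, so it is the same approach.
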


Note that for each partition $\a\in \P$, the partition $\zeta$
given by $\zeta(x)=\a(x)\cap W^\F_\loc(x)$ for any $x\in M$
is an element in $\P^\F$.

  \begin{proposition}\label{muntomu}
Let $\mu\in \mathcal{M}(M)$.
For any $\a,\b\in \P$ and $\eta\in \P^{\F}$ with $\mu(\partial \a)=0$
and $\mu(\partial \b)=0$,
the map $\nu \mapsto H_\nu(\alpha|\b^{\F})$ from $\mathcal{M}(M)$ to
${\mathbb R}^+\cup\{0\}$ is upper semi-continuous at $\mu$,
i.e.
$$
\limsup_{\nu\to \mu}H_{\nu}(\alpha|\b^{\F})\leq H_\mu(\alpha|\b^{\F}).
$$
  \end{proposition}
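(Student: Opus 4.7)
The plan is to sandwich the measurable partition $\beta^{\F}$ from below by a sequence of finite Borel partitions $\zeta_n \in \mathcal{P}$ with $\mu(\partial\zeta_n)=0$, chosen so that $\zeta_n \leq \beta^{\F}$ (i.e., $\beta^{\F}$ refines $\zeta_n$) and $\zeta_n \nearrow \beta^{\F}$ modulo $\mu$-null sets. Once such a sequence is in hand, the monotonicity of conditional entropy yields
\[
H_\nu(\alpha\,|\,\beta^{\F}) \leq H_\nu(\alpha\,|\,\zeta_n)
\]
for every $\nu \in \mathcal{M}(M)$ and every $n$. For each fixed $n$, the partitions $\alpha$, $\zeta_n$, and $\alpha\vee\zeta_n$ are finite with $\mu$-null boundaries, so the map $\nu \mapsto H_\nu(\alpha\,|\,\zeta_n) = H_\nu(\alpha\vee\zeta_n) - H_\nu(\zeta_n)$ is continuous at $\mu$, since weak$^*$ convergence preserves the $\nu$-measure of $\mu$-continuity sets. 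Taking $\limsup_{\nu\to\mu}$ on both sides of the inequality above then gives
\[
\limsup_{\nu\to\mu} H_\nu(\alpha\,|\,\beta^{\F}) \leq H_\mu(\alpha\,|\,\zeta_n).
\]
Letting $n\to\infty$ and invoking Lemma~\ref{martingale lemma} (which provides $H_\mu(\alpha\,|\,\zeta_n)\to H_\mu(\alpha\,|\,\beta^{\F})$) then closes the argument.

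The substantive work lies in constructing the $\zeta_n$. I would cover $M$ by finitely many foliation charts $U_i \cong D_i^T \times D_i^L$ adapted to the continuous foliation $\F$ with $C^1$ leaves. Inside each chart I partition the transverse disk $D_i^T$ into finitely many Borel cells of diameter less than $1/n$ whose boundaries carry zero $\mu$-mass; this is possible because $\mu$ is finite, so among any one-parameter family of candidate separating hypersurfaces only countably many can have positive $\mu$-measure and the rest can be used. Pulling back via the chart homeomorphisms yields plaque-saturated slabs of $U_i$; taking the common refinement across the finite cover and joining with $\beta$ (which also has $\mu$-null boundary) produces a finite measurable partition $\zeta_n$ with $\mu(\partial\zeta_n)=0$. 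By construction each atom of $\zeta_n$ contains an entire local leaf piece intersected with a $\beta$-cell, and hence $\beta^{\F}(x) \subset \zeta_n(x)$ for every $x$. As $n\to\infty$ the transverse cells shrink to points, so the slabs shrink to the local plaques and $\zeta_n \nearrow \beta^{\F}$ modulo $\mu$-null sets.

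The main obstacle I expect is precisely this geometric step: arranging simultaneously that the $\zeta_n$ are finite, coarser than $\beta^{\F}$, refine up to $\beta^{\F}$, and have $\mu$-null boundaries. The continuity of $\F$ with $C^1$ leaves is what makes product foliation charts available, and the freedom to perturb transverse boundaries across a finite cover is what allows the $\mu$-null-boundary requirement to be met without destroying the plaque-saturation property. Once this construction is carried out, the remaining ingredients---monotonicity of conditional entropy, continuity of $\nu\mapsto H_\nu(\cdot)$ on finite partitions with null boundary, and the martingale convergence of Lemma~\ref{martingale lemma}---combine routinely to give the upper semi-continuity.
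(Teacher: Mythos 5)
Your proof follows essentially the same route as the paper: approximate $\beta^{\F}$ from below by finite partitions $\zeta_n\nearrow\beta^{\F}$ with $\mu$-null boundaries, use monotonicity of conditional entropy to bound $H_\nu(\alpha|\beta^{\F})\leq H_\nu(\alpha|\zeta_n)$, invoke weak$^*$ continuity of $\nu\mapsto H_\nu(\alpha|\zeta_n)$ for finite partitions with null boundaries, and close with the martingale lemma. The paper states the existence of the approximating sequence $\{\beta_n\}$ without elaborating, whereas you supply the foliation-chart construction explicitly; this extra detail is welcome but does not change the argument's structure.
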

  
  \begin{proof}
Since $\mu(\partial \b)=0$,
we can take a sequence of partitions
$\{\beta_n\} \subset \mathcal{P}$ such that $\beta_1<\beta_2< \cdots$
and $\beta_n\nearrow \b^{\F}$,
and moreover, $\mu(\partial \beta_n)=0$ for $n=1,2, \cdots$.

Since $\mu(\partial \alpha)=0=\mu(\partial \beta_n)$, and
for any invariant measure $\nu$,
\[
H_\nu(\alpha|\beta_n)
=-\sum_{A_i\in \alpha,B_j\in \beta_n}\nu(A_i \cap B_j)
 \log \frac{\nu(A_i \cap B_j)}{\nu(B_j)},
\]
we have $\displaystyle \lim_{\nu\to \mu}H_{\nu}(\alpha|\beta_n)= H_\mu(\alpha|\beta_n)$
for any $n\in \mathbb{N}$.
By Lemma \ref{martingale lemma}, we obtain 
$$ H_\nu(\alpha|\eta)=\lim_{n\to \infty}H_\nu(\alpha|\beta_n).$$
So for any $\epsilon>0$, there exists $N\in \mathbb{N}$ such that
$H_\mu(\alpha|\beta_N)\leq H_\mu(\alpha|\b^{\F})+\epsilon.$
One has
$$
\limsup_{\nu\to \mu}H_{\nu}(\alpha|\b^{\F})
\leq \limsup_{\nu\to \mu}H_{\nu}(\alpha|\beta_N)
=H_\mu(\alpha|\beta_N)
\leq H_\mu(\alpha|\b^{\F})+\epsilon.
$$
Since $\epsilon>0$ is arbitrary, we get the inequality.
  \end{proof}

  Next, we present the proof of Theorem \ref{existence F-es}.
  This proof is adapted from \cite[Theorem 1.2]{PP22-1}, 
  and relies on the growth rate result from Proposition \ref{F-pressure}. 
  
  \begin{proof}[Proof of Theorem \ref{existence F-es}]
  
Without loss of generality, we may assume that $\{\mu_n\}_n$ converges to $\mu$ in the weak* topology; 
otherwise, we can extract a convergent subsequence.

Given $\epsilon>0$, there exists $\tau>0$ such that the conclusion of Lemma \ref{potential lemma} holds.
  We choose a finite measurable partition 
  $\beta$ for $M$ satisfying 
  \begin{align*}
  \sup\left\{\operatorname{diam}\left(P_{i}\right): \forall\, P_i\in \beta \right\}<\epsilon. 
  \end{align*}
  This implies that for any $n\in\mathbb{N}$, 
  $A \in \bigvee\limits_{i=0}^{n-1} f^{-i} \beta $ and 
  $x, y \in A $, we have 
  \begin{align}\label{(4.1)}
  \left|S_n\phi\left( x\right)-S_n\phi\left( y\right)\right| \leq n \tau.
  \end{align}

By adapting the proof of \cite[Theorem 1.2]{PP22-1}, we establish the following lemma.

  \begin{lemma}\label{Lemma-negative}
   For sufficiently small $\epsilon$, for any $\displaystyle A \in \bigvee_{h=0}^{n-1} f^{-h} \beta$ and $C\in \eta$, 
   we have 
   \begin{align*}
   \lambda_{f^{n}W^{\mathcal{F}}(x,\delta)}\left(f^{n}(A\cap C)\right)\leq \lambda_{f^{n}W^{\mathcal{F}}(x,\delta)}\left(f^{n}(A)\right)<1.
   \end{align*}
  \end{lemma}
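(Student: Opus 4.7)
The first inequality is immediate from monotonicity and the injectivity of $f^n$, so the real content of the lemma is the strict bound $\lambda_{f^n W^{\F}(x,\delta)}(f^n(A)) < 1$. Under the natural normalization that makes $\lambda_{f^n W^{\F}(x,\delta)}$ a probability measure on $f^n W^{\F}(x,\delta)$ (consistent with how the densities $d\lambda_n/d\lambda_{W^{\F}(x,\delta)}$ are set up in Theorem \ref{existence F-es}), this is equivalent, by pulling back through $f^{-n}$ and invoking the leafwise change of variables in Proposition \ref{F-pressure}, to showing that $W^{\F}(x,\delta) \setminus A$ carries positive $\lambda_{W^{\F}(x,\delta)}$-measure.

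Since $A \in \bigvee_{h=0}^{n-1} f^{-h} \beta$, one can write $A = \bigcap_{h=0}^{n-1} f^{-h} B_h$ with $B_h \in \beta$; in particular $A \subseteq B_0$, and $B_0$ has ambient diameter at most $\epsilon$. It therefore suffices to show that for $\epsilon$ small enough (depending only on $x$ and $\delta$), no single element of $\beta$ contains $W^{\F}(x,\delta)$ modulo a leaf-null set. The geometric input is that $\F$ has $C^1$ leaves with nontrivial tangent distribution $E^{\F}$, so the leaf-exponential map at $x$ provides a $C^1$ embedding of a small Euclidean ball of $E^{\F}_x$ into $M$; consequently $W^{\F}(x,\delta)$ has ambient diameter bounded below by some constant $c = c(x,\delta) > 0$. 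Taking $\epsilon < c$ forces $W^{\F}(x,\delta) \not\subseteq B_0$, so $W^{\F}(x,\delta) \setminus \overline{B_0}$ is a nonempty open subset of the $C^1$ leaf and hence carries positive induced leaf-volume, which gives the strict inequality after pushing forward by $f^n$.

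The main obstacles are two related technical points. First, to make the inequality ``$<1$'' meaningful, one has to commit to a specific normalization of the leaf-volume, which is a bookkeeping matter consistent with the probability measures $\lambda_n$ already appearing in the statement of Theorem \ref{existence F-es}. Second, the partition $\beta$ must be chosen so that the boundaries of its elements meet the single fixed leaf $W^{\F}(x,\delta)$ in a leaf-null set, so that ``open and nonempty'' translates to ``positive leaf-measure''; this is a standard genericity condition achievable by a small perturbation of $\beta$, since the leaf is a fixed lower-dimensional submanifold. Once these are in place, the key observation is that only the zeroth coordinate $B_0$ of $A$ enters the argument, so the threshold $\epsilon$ can be taken uniformly in $n$, $A$, and $C$, with $\epsilon_0 = \epsilon_0(x,\delta)$.
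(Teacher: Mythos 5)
Your argument rests on a misreading of the measure $\lambda_{f^n W^{\mathcal{F}}(x,\delta)}$: in this paper it is the \emph{unnormalized} Riemannian volume induced on the (stretched) leaf $f^n W^{\mathcal{F}}(x,\delta)$, as spelled out in Proposition~\ref{F-pressure} and the change-of-variables identity used there, not a probability measure. Its total mass grows without bound in $n$. Under your probability normalization the inequality $\lambda_{f^n W^{\mathcal{F}}(x,\delta)}(f^n(A))<1$ would indeed reduce to showing $W^{\mathcal{F}}(x,\delta)\setminus A$ carries positive leaf volume, but with the correct interpretation this reduction fails. Unwinding the definition,
\begin{align*}
\lambda_{f^n W^{\mathcal{F}}(x,\delta)}\left(f^n(A)\right)
=\int_{A\cap W^{\mathcal{F}}(x,\delta)} e^{-S_n\Phi^{\mathcal{F}}(y)}\,\rd\lambda_{W^{\mathcal{F}}(x,\delta)}(y),
\end{align*}
and the Jacobian $e^{-S_n\Phi^{\mathcal{F}}}$ is exponentially large in $n$; knowing that $A$ misses a positive-volume piece of $W^{\mathcal{F}}(x,\delta)$ gives no upper bound on this integral. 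The ``$<1$'' is genuinely nontrivial precisely because expansion along $\mathcal{F}$ is fighting you.

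The paper's proof is structured in a different way and gets around exactly this obstacle. It exploits the full $n$-tuple $(B_0,\ldots,B_{n-1})$ defining $A\in\bigvee_{h=0}^{n-1}f^{-h}\beta$: since $A$ sits inside an $(n,\cdot)$ Bowen ball and $\beta$ has small ambient diameter, a continuity argument shows that the forward image $f^n(A)\cap f^n W^{\mathcal{F}}(x,\delta)$ is still contained in a single $\mathcal{F}$-ball $W^{\mathcal{F}}(p,\epsilon)$ on the stretched leaf, and then Lemma~\ref{volume lemma} gives $\operatorname{vol}_{\mathcal{F}}\!\left(W^{\mathcal{F}}(p,\epsilon)\right)\le C\epsilon^k<1$ for $\epsilon$ small, uniformly in $n$, $A$, $C$. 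Your observation that only $B_0$ enters and that the threshold is uniform discards precisely the Bowen-ball structure that controls the spreading of $f^n(A)$; the uniform threshold $\epsilon_0$ does exist, but it comes from the volume estimate on small $\mathcal{F}$-balls, not from a lower bound on $\operatorname{diam} W^{\mathcal{F}}(x,\delta)$. The secondary technical worry you raise about choosing $\beta$ with leaf-null boundaries is also not needed once the argument is about containment in a small ball rather than positivity of a complement.
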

  
  \begin{proof}
  Fix $\epsilon>0$. By the continuity of $f$, 
  for any sufficiently small $\epsilon'>0$, $\displaystyle A \in \bigvee_{h=0}^{n-1} f^{-h} \beta$ and $p\in M$, 
   if 
   \begin{align}\label{Lemma-negative-(1)}
   f^{n-1} A \cap f^{n-1} W^{\mathcal{F}}(x,\delta) \subset W^{\mathcal{F}}(f^{-1}p,\epsilon'), 
   \end{align}
  then 
  $$
  f^n A \cap f^n W^{\mathcal{F}}(x,\delta) \subset W^{\mathcal{F}}(p,\epsilon). 
$$  

We now prove the existence of $p$ in \eqref{Lemma-negative-(1)}. 
We proceed by contradiction and assume that 
there does not exist $p\in f^{n} W^{\mathcal{F}}(x,\delta)$ such that 
$f^{n-1} A \cap f^{n-1} W^{\mathcal{F}}(x,\delta) \subset$ $W^{\mathcal{F}}(f^{-1}p,\epsilon')$. 
Then there exist 
  $z_1, z_2 \in f^{n-1} A\cap f^{n-1} W^{\mathcal{F}}(x,\delta)$ satisfying 
  $W^{\F}\left(z_1, \frac{\epsilon'}{2}\right) \cap W^{\F}\left(z_2, \frac{\epsilon'}{2}\right)=\varnothing$, 
  which implies $d^{\F}(z_1,z_2)> \epsilon'$. Therefore, we conclude 
   \[
  \max\limits_{0\leq i<n} d^{\F}\left(f^i\left(f^{-(n-1)}z_1 \right), f^i\left(f^{-(n-1)}z_2 \right)\right)>\epsilon'. 
  \] 
However, this  contradicts the fact that $f^{-(n-1)} z_1, f^{-(n-1)} z_2 \in A$ and $A \in B(z,n, \epsilon')$ for some $z\in M$. 
Therefore, the point $p$ must exist, and consequently, 
\[
f^{n} A \cap f^{n} W^{\mathcal{F}}(x,\delta) \subset W^{\mathcal{F}}(p,\epsilon).
\]
 By choosing $\epsilon$ sufficiently small, 
  we obtain 
  $\lambda_{f^{n} W^{\mathcal{F}}(x,\delta)}\left(f^{n}(A)\right)<1$. 
Thus, we finish the proof of this lemma. 
 \end{proof}

  For each $\displaystyle A \in \bigvee_{h=0}^{n-1} f^{-h} \beta$ and $C\in \eta$ with $A\cap C\neq \varnothing$, 
  we fix a point $x_{A,C} \in A\cap C$  and define 
  \[
  K_{n,A,C}:=\int_{f^{n}\left(A \,\cap \,C\,\cap\, W^{\mathcal{F}}(x,\delta)\right)} \exp \left(S_n\phi\left(f^{-n} y\right)\right) \rd \lambda_{f^{n}W^{\mathcal{F}}(x,\delta)}(y). 
  \]
    By Lemma \ref{Lemma-negative} and \eqref{(4.1)}, we derive
  \begin{align}
  \log K_{n,A, C}  \leq S_n \phi\left(x_{A, C}\right)+n \tau+\log \lambda_{f^n W^{\mathcal{F}}(x,\delta)}\left(f^{n}(A\cap C)\right) \leq S_n \phi\left( x_{A, C}\right)+n \tau .\label{(4.3)}
  \end{align}
  For each $0 \leq j \leq n-1$, we have
  \begin{align*}
  \int_{f^{j} W^{\mathcal{F}}(x,\delta)} \phi(y) \, \rd\left(f_{*}^{j} \lambda_{n}\right) (y)&=\int_{ W^{\mathcal{F}}(x,\delta)} \phi(f^jy) \, \rd\left( \lambda_{n}\right)(y)\\
  &=\frac{1}{Z_{n}^{\phi}} \int_{ W^{\mathcal{F}}(x,\delta)} \phi(f^jy) \, \rd\left( \lambda_{W^{\mathcal{F}}(x,\delta)}\right)(y)\\
  &=\frac{1}{Z_{n}^{\phi}} \int_{f^{n}W^{\mathcal{F}}(x,\delta)} \phi\left(f^{-(n-j)} y\right) e^{S_n\phi\left(f^{-n} y\right)}  \rd \lambda_{f^n W^{\mathcal{F}}(x,\delta)}(y),
  \end{align*}
  where 
  \begin{align*}
  Z_{n}^{\phi}&:=\int_{W^{\mathcal{F}}(x,\delta)} \exp \left(S_n(\phi-\Phi^{\F})\left(z\right)\right) \,\rd \lambda_{W^{\mathcal{F}}(x,\delta)}(z)\\
  &=\int_{f^{n}W^{\mathcal{F}}(x,\delta)} \exp \left(S_n\phi\left(f^{-n} y\right)\right) \rd \lambda_{f^n W^{\mathcal{F}}(x,\delta)}(y). 
  \end{align*}
   Hence, by \eqref{(4.1)} and the definition of $K_{n,A,C}$, we obtain 
  \begin{align}
  &\int_{M} \phi(y) \,\rd \mu_{n}(y) =\frac{1}{n}\sum_{j=0}^{n-1} \int_{f^{j} W^{\mathcal{F}}(x,\delta)} \phi(y) \,\rd (f^j_*\lambda_n)(y)\nonumber\\
  = & \frac{1}{nZ_{n}^{\phi}}\sum_{j=0}^{n-1}  \int_{f^{n}W^{\mathcal{F}}(x,\delta)} \phi\left(f^{-(n-j)} y\right) e^{S_n\phi\left(f^{-n} y\right)}  \rd \lambda_{f^n W^{\mathcal{F}}(x,\delta)}(y)\nonumber\\
 \geq & \frac{1}{n Z_{n}^{\phi}} 
  \sum_{A \in \bigvee\limits_{h=0}^{n-1} f^{-h} \beta} \sum_{C\in \eta} \sum_{j=0}^{n-1} \left(\phi(f^j (x_{A,C}))-\tau\right)   \int\limits_{f^{n}\left(A \cap C\cap W^{\mathcal{F}}(x,\delta)\right)} e^{S_n\phi\left(f^{-n} y\right)} \rd \lambda_{f^n W^{\mathcal{F}}(x,\delta)}(y)\nonumber\\
  =&\frac{1}{n Z_{n}^{\phi}} \sum_{A \in \bigvee\limits_{h=0}^{n-1} f^{-h} \beta}\sum_{C\in \eta} \left(S_n\phi\left(x_{A, C}\right)-n \tau\right) K_{n,A,C}. \label{(4.2)}
  \end{align}
We emphasize that although the partition $\eta$ is uncountable, 
for any local leaf $W^{\mathcal{F}}(x,\delta)$, 
the collection $\left\{C \in \eta : C \text{ contains an open subset of } W^{\mathcal{F}}(x,\delta)  \right\}$ 
is at most countable. 
Therefore, the summation $\displaystyle \sum_{C\in\eta}$ is well-defined 
since it reduces to a countable sum over 
those partition elements that intersect $W^{\mathcal{F}}(x,\delta)$. 

Noting that $\lambda_n(A\cap C)=\dfrac{K_{n,A,C}}{Z_n^\phi}$ and 
  $\sum\limits_{A \in \bigvee\limits_{h=0}^{n-1} f^{-h} \beta} \sum\limits_{C\in \eta}K_{n,A,C}=Z_{n}^{\phi}$, we have 
  \begin{align}
  H_{\lambda_{n}}\left(\left. \bigvee\limits_{h=0}^{n-1} f^{-h} \beta \right| \eta\right) & 
  =-\sum_{A \in \bigvee\limits_{h=0}^{n-1} f^{-h} \beta}\sum_{C\in \eta} \lambda_{n}(A\cap C) \log \frac{\lambda_{n}(A\cap C)}{\lambda_n(C)}\nonumber\\
  &\geq -\sum_{A \in \bigvee\limits_{h=0}^{n-1} f^{-h} \beta}\sum_{C\in \eta} \lambda_{n}(A\cap C) \log \lambda_{n}(A\cap C)\nonumber\\  
  & =-\sum_{A \in \bigvee\limits_{h=0}^{n-1} f^{-h} \beta} \sum_{C\in \eta} \frac{K_{n, A, C}}{Z_{n}^{\phi}} \log \frac{K_{n, A, C}}{Z_{n}^{\phi}}, \nonumber \\
  & =\log Z_{n}^{\phi}-\sum_{A \in \bigvee\limits_{h=0}^{n-1} f^{-h} \beta} \sum_{C\in \eta} \frac{K_{n, A, C}}{Z_{n}^{\phi}} \log K_{n, A, C}\label{(4.4)}. 
  \end{align}
 Applying inequalities \eqref{(4.3)} and \eqref{(4.4)}, we obtain 
  \begin{align}\label{(4.5)}
  H_{\lambda_{n}}\left(\left. \bigvee_{h=0}^{n-1} f^{-h} \beta \right| \eta\right) \geq 
  \log Z_{n}^{\phi}-\sum_{A \in \bigvee_{h=0}^{n-1} f^{-h} \beta} \sum_{C\in \eta}\frac{K_{n, A, C}}{Z_{n}^{\phi}}\left(S_n\phi\left( x_{A, C}\right)+n \tau\right). 
  \end{align}
  Combining \eqref{(4.2)}, \eqref{(4.5)} and $\sum\limits_{A \in \bigvee\limits_{h=0}^{n-1} f^{-h} \beta} \sum\limits_{C\in \eta}K_{n,A,C}=Z_{n}^{\phi}$, we conclude 
  \begin{align*}
  & H_{\lambda_{n}}\left(\left. \bigvee_{h=0}^{n-1} f^{-h} \beta \right| \eta\right)+n \int_{M} \phi \,\rd \mu_{n} \nonumber\\
  & \geq \log Z_{n}^{\phi}-\sum_{A \in \bigvee\limits_{h=0}^{n-1} f^{-h} \beta} \sum_{C\in \eta} \frac{K_{n, A, C}}{Z_{n}^{\phi}}\left(S_n \phi\left( x_{A, C}\right)+n \tau\right) \\
  &\quad +\frac{1}{Z_{n}^{\phi}} \sum_{A \in \bigvee\limits_{h=0}^{n-1} f^{-h} \beta}\sum_{C\in \eta} \left(S_n \phi\left( x_{A, C}\right)-n \tau\right) K_{n, A, C} \\
  & \geq \log Z_{n}^{\phi}-2 n \tau .\nonumber
  \end{align*}
  By Lemma \ref{conditional MM lemma}, for any integers $0<q<n$, we have 
  \begin{align*}
  \frac{1}{n} \log Z_{n}^{\phi}- \int_{M} \phi \,\rd \mu_{n}-2 \tau & 
  \leq \frac{1}{n} H_{\lambda_{n}}\left(\left. \bigvee_{h=0}^{n-1} f^{-h} \beta \right| \eta\right)  \\
  &\leq \frac{1}{q} H_{\mu_{n}}\left(\left.\bigvee_{i=0}^{q-1} f^{-i} \beta\right|f \beta^{\F}\right)+ \frac{3q}{n}\log \left(\operatorname{Card} \alpha\right). 
  \end{align*}
Letting $ n \to\infty  $ and using Proposition \ref{F-pressure} and proposition \ref{muntomu}, we derive 
  \begin{align*}
  P^{\mathcal{F}}_{\text{top}}(f, \phi, \overline{W^{\F}(x,\delta)})&=\lim _{n \rightarrow \infty} \frac{1}{n}\log Z_{n}^{\phi} \leq \lim _{n \rightarrow \infty}\left(\frac{1}{q}H_{\mu_{n}}\left(\left.\bigvee_{i=0}^{q-1} f^{-i}\beta\right| f\beta^{\F}\right)+\int_{M} \phi \,\rd \mu_{n}\right)+2 \tau \\
  & =\frac{1}{q}H_{\mu}\left(\left.\bigvee_{i=0}^{q-1} f^{-i} \beta \right|f\beta^{\F}\right)+\int_{M} \phi \,\rd \mu+2 \tau. 
  \end{align*}
  By taking $q\to\infty$, we have 
  \begin{align*}
    P^{\mathcal{F}}_{\text{top}}(f, \phi, \overline{W^{\F}(x,\delta)}) \leq h_{\mu}(f, \beta|f\beta^{\F})+\int \phi \,\rd \mu+2 \tau\leq h_{\mu}^{\F}(f)+\int_{M} \phi \,\rd \mu+2 \tau.
  \end{align*}
  Then the proof of this theorem  is completed by letting $\tau \to 0$. 
  \end{proof}

\section{Generating $\mathcal{F}$-Equilibrium States from Each $x$}\label{Generating F-Equilibrium States from Each x}

In this section, we prove that under certain conditions, for any $x\in M$ and $\delta>0$, 
$$
P^{\mathcal{F}}_{\text{top}}(f, \phi) 
=P^{\mathcal{F}}_{\text{top}}(f, \phi, \overline{W^{\F}(x,\delta)}) .
$$ 
For notational clarity, in this section, 
we will use $P^{u}_{\text{top}}(f, \phi)$ and $W^u(x,\delta)$ 
in place of $P^{\mathcal{F}}_{\text{top}}(f, \phi)$ and $W^{\F}(x,\delta)$, respectively. 
All other symbols will be treated similarly.


Using methods analogous to those in \cite[Proposition 3.1]{PP22-1}, 
we establish the following result. 

\begin{theorem}\label{F-pressure-2}
  Let $f: M \to M$ be a diffeomorphism on a closed Riemannian manifold $M$ 
satisfying conditions {\bf (C1)}-{\bf (C3)}, 
and $\phi:M \to \mathbb{R}$ a continuous function. 
Then for any $x\in M$ and $\delta>0$, 
\begin{align*}
  P_{\operatorname{top}}(f, \phi)
  &=\lim_{n\to\infty}\frac{1}{n}\log \left(\int_{f^n W^{u}_\delta(x)} e^{S_n\phi(f^{-n} y)}\,\rd \lambda_{f^n W^{u}_\delta(x)}\right)\\
  &= \lim_{n \rightarrow+\infty} \frac{1}{n} \log \left(\int_{W^{u}_\delta(x)}  e^{S_n \left(\phi-\Phi^{u}\right)\left(y\right)} \,\rd \lambda_{W^{u}_\delta(x)}(y)\right), 
\end{align*}
where $\Phi^u(x):=-\log \left| \det\left( Df|E^u_x \right) \right|$. 
Moreover, for any $x\in M$ and $\delta>0$, we have 
\[
P_{\operatorname{top}}(f,\phi)=P^{u}_{\operatorname{top}}(f,\phi)=P^{u}(f,\phi, \overline{W^u(x,\delta)}). 
\]  
\end{theorem}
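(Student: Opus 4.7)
The plan is to reduce the entire theorem to the single inequality
\[
P_{\operatorname{top}}(f,\phi) \leq P^u_{\operatorname{top}}(f,\phi, \overline{W^u(x,\delta)})
\]
for every $x \in M$ and $\delta > 0$. Corollary \ref{top pressure corollary} gives the reverse chain $P^u_{\operatorname{top}}(f,\phi, \overline{W^u(x,\delta)}) \leq P^u_{\operatorname{top}}(f,\phi) \leq P_{\operatorname{top}}(f,\phi)$ for free, and Proposition \ref{F-pressure} identifies $P^u_{\operatorname{top}}(f,\phi, \overline{W^u(x,\delta)})$ with the $\limsup$ of $n^{-1}\log$ of the two displayed integrals.

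To prove the key inequality, fix $\tau > 0$ and choose $\epsilon = \epsilon(\tau) > 0$ from Lemma \ref{potential lemma}. Write $h(n) := h_{x,\delta}^\epsilon(n)$. By \textbf{(C3)}, every $y \in M$ admits $z_y \in \overline{W^u(x,\delta)}$ with $d^{cs}(f^{h(n)}z_y, y) \leq \epsilon g^\epsilon(n)^{-1}$, and \textbf{(C2)} propagates this to $d^{cs}(f^{k+h(n)}z_y, f^k y) \leq \epsilon$ for all $0 \leq k \leq n$. Next pick an $(n,\epsilon)$ $u$-spanning set $U_n \subset f^{h(n)}\overline{W^u(x,\delta)}$ of the pushforward leaf, and for each $y$ select $u_y \in U_n$ with $d^u(f^k u_y, f^{k+h(n)}z_y) < \epsilon$ for $0 \leq k \leq n-1$. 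Since the ambient distance is bounded by the intrinsic cs-distance along a common cs-leaf and by the intrinsic u-distance along a common u-leaf, the triangle inequality forces $d(f^k y, f^k u_y) < 2\epsilon$ for all $0 \leq k \leq n-1$, so $U_n$ is an $(n,2\epsilon)$-spanning set of $M$.

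Adapting the spanning-to-integral comparison from the proof of Proposition \ref{F-pressure}, now applied to the leaf $f^{h(n)}\overline{W^u(x,\delta)}$, gives
\[
\sum_{u \in U_n} e^{S_n\phi(u)} \leq C(\epsilon)\, e^{n\tau} \int_{f^{n+h(n)}W^u(x,\delta)} e^{S_n\phi(f^{-n}w)}\, \rd\lambda_{f^{n+h(n)}W^u(x,\delta)}(w).
\]
Setting $N := n + h(n)$ and substituting $w = f^N z$ with $z \in W^u(x,\delta)$, the identity $S_n\phi(f^{-n}w) = S_N\phi(z) - S_{h(n)}\phi(z)$, together with the crude bound $|S_{h(n)}\phi(z)| \leq h(n)\|\phi\|_\infty$, shows the right-hand side is at most $e^{h(n)\|\phi\|_\infty}$ times the integral of $e^{S_N\phi(f^{-N}w)}$ over $f^N W^u(x,\delta)$. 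Taking $n^{-1}\log$, using $h(n)/n \to 0$ and $N/n \to 1$, passing to $\limsup_n$, and finally $\tau \to 0$, we conclude
\[
P_{\operatorname{top}}(f,\phi) \leq \limsup_{N \to \infty} \frac{1}{N}\log \int_{f^N W^u(x,\delta)} e^{S_N\phi(f^{-N}w)}\, \rd\lambda,
\]
which by Proposition \ref{F-pressure} equals $P^u_{\operatorname{top}}(f,\phi, \overline{W^u(x,\delta)})$. The matching $\liminf$ lower bound needed to strengthen $\limsup$ to $\lim$ in the integral limit follows from the symmetric construction in which $U_n$ is replaced by an $(n,\epsilon)$-separated set of $M$ pulled back to $W^u(x,\delta)$ through the same \textbf{(C2)}-\textbf{(C3)} mechanism.

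The delicate step is the triangle inequality in the second paragraph: the ambient-vs.-intrinsic metric comparison $d(p,q) \leq d^{cs}(p,q)$ (respectively $d(p,q) \leq d^u(p,q)$) for points on a common cs-leaf (respectively u-leaf) is exactly what the continuous foliation structures granted by \textbf{(C1)} provide. The remaining subexponential bookkeeping is benign because $h(n)/n \to 0$ by \textbf{(C3)}, and the cs-tolerance $\epsilon g^\epsilon(n)^{-1}$ produced by \textbf{(C2)} enters only inside a closeness hypothesis and never appears in the final exponential rate.
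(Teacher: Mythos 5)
Your proposal follows the same route as the paper: use \textbf{(C3)} to push $\overline{W^u(x,\delta)}$ forward by $h(n)$ iterates so that every point of $M$ is $(\epsilon g^\epsilon(n)^{-1})$-close in the $cs$-direction to the image leaf, use \textbf{(C2)} to propagate that $cs$-closeness over $n$ further iterates, combine with a $u$-spanning set of the image leaf and the triangle inequality to manufacture an $(n,2\epsilon)$-spanning set of all of $M$, and then compare the spanning sum to the leafwise integral via the volume estimate of Lemma~\ref{volume lemma}, paying a subexponential $e^{h(n)\|\phi\|}$ penalty that vanishes in the exponential rate.

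The one place where your write-up is muddled is the final step. You take $\limsup_n$ to arrive at $P_{\operatorname{top}}(f,\phi)\le \limsup_N\frac{1}{N}\log\int_{f^N W^u_\delta(x)}\cdots$, and then assert that ``the matching $\liminf$ lower bound needed to strengthen $\limsup$ to $\lim$ follows from the symmetric construction in which $U_n$ is replaced by an $(n,\epsilon)$-separated set of $M$ pulled back to $W^u(x,\delta)$.'' That last sentence does not do what you want: a separated-set argument gives a lower bound on $N^{\F}_d$ in terms of the integral (which is exactly the content of Proposition~\ref{F-pressure}), not a lower bound $\liminf_n\frac{1}{n}\log\int\ge P_{\operatorname{top}}(f,\phi)$. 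The correct and shorter fix is that your spanning-set inequality holds for every $n$, so take $\liminf_n$ on both sides directly, and use the standard fact that $P_{\operatorname{top}}(f,\phi)=\lim_{\epsilon\to 0}\varliminf_n\frac{1}{n}\log S_d(\phi,n,\epsilon)$. This yields $P_{\operatorname{top}}(f,\phi)\le\varliminf_n\frac{1}{n}\log\int$; combined with $\varlimsup_n\frac{1}{n}\log\int=P^u_{\operatorname{top}}(f,\phi,\overline{W^u(x,\delta)})\le P_{\operatorname{top}}(f,\phi)$ (Proposition~\ref{F-pressure} and Corollary~\ref{top pressure corollary}), all three quantities coincide and the limit exists. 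This is precisely what the paper does, and your argument should be restated in that form rather than invoking a separate construction.
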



Before stating our proof, we introduce the following auxiliary result.

\begin{lemma}\label{volume lemma}
Let $M$  be  a closed Riemannian manifold, and 
$\F$ a continuous foliation with $C^1$ leaves of dimension $k$.  
For any $\epsilon>0$, there exists $c,C>0$ such that 
\[
c\epsilon^k \leq \operatorname{vol}_{\F}\left( W^{\F}(x,\epsilon) \right) \leq C\epsilon^{k},\quad \text{ for any } \,x\in M. 
\]
\end{lemma}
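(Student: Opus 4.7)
The plan is to exploit compactness of $M$ together with a local analysis in foliation charts. By continuity of $\F$ and compactness of $M$, there is a finite cover of $M$ by foliation charts $\{(U_j, \varphi_j)\}_{j=1}^N$, with each $\varphi_j : U_j \to D^k \times D^{n-k}$ a homeomorphism mapping plaques to horizontal slices $D^k \times \{v\}$. Since the leaves are $C^1$ and the tangent distribution $E^\F$ is continuous, the Riemannian volume density on each plaque, expressed as $\rho_j(u,v)\,du$ in chart coordinates, is continuous in $(u,v)$. By compactness, $\rho_j$ together with the metric coefficients on plaques is uniformly bounded above and below by positive constants on a slightly shrunken compact subcover.

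For the lower bound, a Lebesgue number argument produces $\delta_0 > 0$ such that for every $x \in M$, the intrinsic ball $W^\F(x, \delta_0)$ is entirely contained in a single plaque of some chart $U_j$. Within such a plaque, the intrinsic distance is uniformly equivalent to the chart-Euclidean distance, so $\operatorname{vol}_\F(W^\F(x, \delta_0)) \geq c_0\, \delta_0^k$ with a uniform $c_0 > 0$. For arbitrary $\epsilon > 0$, the monotonicity $W^\F(x, \epsilon) \supseteq W^\F(x, \min(\epsilon, \delta_0))$ yields the lower bound $\operatorname{vol}_\F(W^\F(x, \epsilon)) \geq c\,\epsilon^k$ with $c = c(\epsilon) > 0$.

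For the upper bound, I would cover $W^\F(x, \epsilon)$ by a uniformly bounded number of plaques and sum their volumes. Every point of $W^\F(x, \epsilon)$ is reached from $x$ by an intrinsic path of length at most $\epsilon$, which can be subdivided into at most $\lceil \epsilon / \delta_0 \rceil$ consecutive segments, each contained in a single plaque. By compactness of the chart atlas, each plaque intersects nontrivially only a uniformly bounded number of neighboring plaques, so the total count of plaques meeting $W^\F(x, \epsilon)$ is bounded by some $N(\epsilon)$ independent of $x$. Since each plaque has intrinsic volume at most $C_0$ from the density bounds of the first paragraph, this gives $\operatorname{vol}_\F(W^\F(x, \epsilon)) \leq N(\epsilon)\, C_0 \leq C\,\epsilon^k$ for a suitable $C = C(\epsilon) > 0$.

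The hard part is the upper bound: since leaves can wind nontrivially through $M$ on the scale of $\epsilon$, one must control how many distinct plaques can be reached from $x$ along intrinsic paths of length at most $\epsilon$. The chaining argument above addresses this via the finite chart atlas, but care is needed to ensure that the plaque count $N(\epsilon)$ is genuinely independent of $x$, which relies essentially on compactness of $M$ together with the uniform plaque size provided by the Lebesgue number $\delta_0$, itself a consequence of the continuity of the foliation.
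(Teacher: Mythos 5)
The proposal follows the same basic strategy as the paper's proof: fix a finite atlas of foliated charts, use compactness to get uniform two-sided comparability of the leaf metric with Euclidean metric on each chart, and compare intrinsic balls to Euclidean balls. Where you diverge is in how carefully you treat the case where $W^{\F}(x,\epsilon)$ does not fit inside a single plaque. The paper's proof writes the inclusion
$B_{\mathrm{Eucl}}(\phi_i(x),\lambda_i\epsilon)\subset \phi_i(W^{\F}(x,\epsilon))\subset B_{\mathrm{Eucl}}(\phi_i(x),\Lambda_i\epsilon)$
directly, which only makes sense when $W^{\F}(x,\epsilon)$ lies inside one plaque of $U_i$, i.e.\ when $\epsilon$ is below a Lebesgue-number threshold. (The authors tacitly accept this; they remark immediately afterward that only the $\epsilon$-independent form $c\leq \operatorname{vol}_{\F}(W^{\F}(x,\epsilon))\leq C$ is used, and in Proposition \ref{F-pressure} and Theorem \ref{F-pressure-2} the lemma is applied only at small scales.) Your proposal tries to prove the statement as literally stated for all $\epsilon$, which is a genuine strengthening: the lower bound is handled cleanly by monotonicity, and the upper bound by a plaque-chaining argument. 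That chaining step is also where your write-up leaves a gap. The assertion that ``each plaque intersects nontrivially only a uniformly bounded number of neighboring plaques'' is not automatic for an arbitrary finite foliated atlas: a single plaque of $U_i$ could a priori re-enter a neighboring chart $U_j$ in many connected components and thus meet many distinct plaques of $U_j$, so the number of plaques reachable by a chain of length $\lceil \epsilon/\delta_0\rceil$ is not evidently finite. To close this you need to invoke a \emph{regular} (or ``nice'') foliated cover — one refined so that each plaque of $U_i$ meets at most one plaque of each $U_j$ — which exists on any compact foliated manifold and is a standard device in foliation theory. With that added hypothesis the chain count is bounded by $(N+1)^{\lceil\epsilon/\delta_0\rceil}$ (with $N$ the number of charts) and the argument goes through. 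So: same core approach as the paper, more ambitious scope, one missing citation to the regular-cover fact in the upper bound.
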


\begin{proof}
We can find a finite open cover $\{U_i\}$ of $M$ with foliated charts $\phi_i: U_i \to \mathbb{R}^k \times \mathbb{R}^{n-k}$ where the leaves are mapped to $\mathbb{R}^k \times \{y\}$. 

On each chart $U_i$, the Riemannian metric $g$ has bounded coefficients: there exist $\lambda_i, \Lambda_i > 0$ such that for any leaf $L$ and $x \in U_i \cap L$,
\begin{align}\label{volume-(1)}
\lambda_i \|v\|_{\text{Eucl}} \leq \|v\|_g \leq \Lambda_i \|v\|_{\text{Eucl}} \quad \text{for } v \in T_xL,
\end{align}
where $\|\cdot\|_{\text{Eucl}}$ is the Euclidean norm. 
We also denote the Euclidean volume by $\text{vol}_{\text{Eucl}}$. 

In each chart $U_i$, for any $x \in U_i$, we have 
\[
B_{\text{Eucl}}(\phi_i(x), \lambda_i \epsilon) \subset \phi_i(W^{\mathcal{F}}(x,\epsilon)) \subset B_{\text{Eucl}}(\phi_i(x), \Lambda_i \epsilon),
\]
where $B^k_{\text{Eucl}}(x_0,r)$ is the Euclidean ball in $\mathbb{R}^k \times \{y\}$ centered at $x_0$ with radius $r$.
Combining this with \eqref{volume-(1)}, for any $x \in U_i$, we have 
\begin{align*}
\lambda_i^{2k}\epsilon^k \leq \lambda_i^k\text{vol}_{\text{Eucl}}(\phi_i(W^{\mathcal{F}}(x,\epsilon)))\leq \text{vol}_{\F}\left(W^{\F}(x,\epsilon) \right),\\
\text{vol}_{\F}\left(W^{\F}(x,\epsilon) \right)\leq \Lambda_i^k\text{vol}_{\text{Eucl}}(\phi_i(W^{\mathcal{F}}(x,\epsilon)))\leq \Lambda_i^{2k}\epsilon^k.  
\end{align*}
Taking $c = \min_i \lambda_i^{2k} > 0$ and $C=\max \Lambda_i^{2k}$ over the finite cover, 
we finish the proof. 
\end{proof}

In this paper, we only need the estimate $
c \leq \operatorname{vol}_{\F}\left( W^{\F}(x,\epsilon) \right) \leq C$.
This estimate plays an important role in our argument 
and appears in both the proof of Theorem C in \cite{HHW17} 
and the proof of Lemma 3.1 in \cite{HSX08}.

\begin{proof}[Proof of Theorem \ref{F-pressure-2}]
By Proposition \ref{F-pressure} and Corollary \ref{top pressure corollary}, 
we only need to prove the following inequality holds for any $x\in M$ and $\delta>0$: 
\begin{align*}
P_{\text{top}}(f, \phi)\leq \varliminf _{n \rightarrow+\infty} \frac{1}{n} \log \left(\int_{W^{u}(x, \delta)}  e^{S_n \left(\phi-\Phi^{\F}\right)\left(y\right)} \,\rd \lambda_{W^{u}_\delta(x)}(y)\right). 
\end{align*}

  Given $\tau>0$, 
  there exists $\epsilon_0=\epsilon_0(\tau)>0$ with $\epsilon_0<\delta$ such that the conclusion of 
   Lemma \ref{potential lemma} is satisfied. 

Fix $x\in M$ and $\delta>0,0<\epsilon<\epsilon_0$, by condition {\bf (C3)}, 
there exists an increasing function $h_{x,\delta}^\epsilon:\mathbb{N}\to \mathbb{N}$ with 
$\displaystyle \lim_{n\to\infty}h_{x,\delta}^\epsilon(n)n^{-1}=0$ 
such that for any $ n\in \mathbb{N}$, 
\begin{align*}
 \bigcup\limits_{z\in f^{h_{x,\delta}^\epsilon(n)} W^{u}(x,\delta)} W^{cs}(z, \epsilon g^{\epsilon}(n)^{-1})=M. 
\end{align*}
We choose a maximal set 
$S=\left\{x_{1}, \cdots, x_{N}\right\}\subset f^{n+h_{x,\delta}^\epsilon(n)} W^{u}(x,\delta)$ such that
\[
  W^u\left(x_{i}, \frac{\epsilon}{4}\right) \cap W^u\left(x_{j}, \frac{\epsilon}{4}\right)=\varnothing \quad \text{for any} \,\, i \neq j, \, 1\leq i,j\leq N
\]
and
  \[
    \bigcup_{i=1}^{N} W^{u}\left(x_{i}, \frac{1}{4}\epsilon\right)\subset f^{n} \left(W^{u}(x,\delta)\right), \quad f^{n} \left(\overline{W^u(x,\delta) }\right)\subset \bigcup_{i=1}^{N} W^{u}\left(x_{i}, \epsilon\right).
  \]
For any $z\in M$, there exists $y\in f^{h_{x,\delta}^\epsilon(n)} W^{u}(x,\delta)$ satisfying 
$z\in W^{cs}(y, \epsilon g^\epsilon(n)^{-1})$. 
For each $0\leq j<n$, by condition {\bf (C3)}, we have 
\begin{align*}
  d^{cs}(y,z)< \epsilon g^\epsilon(n)^{-1}\leq \epsilon g^\epsilon(j)^{-1} \Rightarrow d^{cs}(f^jz, f^jy)<\epsilon. 
\end{align*} 
By the maximality of the set $S$ and property (3) of Definition \ref{def: expanding foliation}, 
we conclude that there exists $x_{i}\in S$ such that 
\begin{align*}
\max\limits_{0\leq j<n}d^{u}\left(f^jy, f^j\left(f^{-n} x_i\right)\right)=d^{u}\left(f^ny, x_i\right)<\epsilon.
\end{align*} 
Therefore, for the chosen point $z\in M$, we derive 
\begin{align*}
\max\limits_{0\leq j<n}d\left( f^jz, f^j\left(f^{-n}x_{i}\right)\right) &\leq 
\max\limits_{0\leq j<n}d\left(f^jz, f^jy\right)+\max\limits_{0\leq j<n}d\left( f^jy, f^j\left(f^{-n}x_{i}\right)\right)\\
&\leq \max\limits_{0\leq j<n}d^{cs}\left(f^jz, f^jy\right)+\max\limits_{0\leq j<n}d^{u}\left( f^jy, f^j\left(f^{-n}x_{i}\right)\right) < 2\epsilon. 
\end{align*}
Consequently, by setting $y_i:=f^{-n} x_i\in f^{h_{x,\delta}^\epsilon(n)}W^u(x,\delta)$ for $1\leq i\leq N$, 
the points $\left\{y_{1}, \ldots, y_{N}\right\}$ form an $(n,2\epsilon)$-spanning set for $M$. 
Let $\text{vol}_u$ denote the volume induced by the Riemannian structure on the foliation $W^u$.  
By Lemma \ref{volume lemma}, there exists $c_1>0$ such that 
 $\inf\limits_{z\in M} \text{vol}_u\left(W^u\left(z,\left(\frac{\epsilon}{4}\right)\right)\right)\geq c_1$. 
Therefore, we have 
\begin{align*}
 S_{d}(\phi,n,2\epsilon)  &\leq \sum_{i=1}^{N} e^{S_n \phi\left(y_{i}\right)}=\sum_{i=1}^{N} e^{S_n \phi\left(f^{-n}x_{i}\right)}\nonumber \\
& \leq \sum_{i=1}^{N} \frac{e^{n\tau}}{\text{vol}_u\left(W^u\left(f^n y_{i}, \frac{\epsilon}{4}\right)\right)} \int\limits_{W^u\left(f^n y_{i}, \frac{\epsilon}{4}\right)} e^{S_n \phi\left(f^{-n}z\right)} \,\rd \lambda_{f^{n+h_{x,\delta}^\epsilon(n)} W^{u}(x,\delta)}(z)\nonumber\\
& \leq \frac{1}{c_1} e^{n \tau} \int\limits_{f^{n+h_{x,\delta}^\epsilon(n)} W^{u}(x,\delta)} e^{S_n\phi\left(f^{-n} z\right)} \,\rd \lambda_{f^{n+h_{x,\delta}^\epsilon(n)}W^{u}(x,\delta)}(z)\nonumber\\
& \leq \frac{1}{c_1} e^{n \tau+h_{x,\delta}^\epsilon(n)\|\phi\|} \int\limits_{f^{n+h_{x,\delta}^\epsilon(n)} W^{u}(x,\delta)} e^{S_{n+h_{x,\delta}^\epsilon(n)}\phi\left(f^{-(n+h(n))} z\right)} \,\rd \lambda_{f^{n+h_{x,\delta}^\epsilon(n)}W^{u}(x,\delta)}(z),
\end{align*}
where $\|\phi\|:=\sup\limits_{x\in M}|\phi(x)|<\infty$. 
Using $\displaystyle \lim_{n\to\infty}h_{x,\delta}^\epsilon(n)n^{-1}=0 $, we conclude
\begin{align*}
 P_{\text{top}}(f,\phi) &=\lim _{\epsilon \rightarrow 0} \varliminf_{n \rightarrow+\infty} \frac{1}{n} \log S_{d}(\phi, n, 2 \epsilon)\\
  &\leq \varliminf_{n \rightarrow+\infty} \frac{1}{n} \log \int\limits_{f^n W^{u}(x,\delta)} e^{S_n\phi\left(f^{-n} z\right)} \,\rd \lambda_{f^{n}W^{u}(x,\delta)}(z)+\tau.
\end{align*} 
Since $\tau$ is arbitrary, the theorem follows.
\end{proof}

  \section{Applications}

In this section, 
we present several classes of diffeomorphisms that satisfy the hypotheses of Theorem \ref{F-pressure-2}. 
As a consequence, for these dynamical systems, 
equilibrium states for continuous potentials 
can be explicitly constructed through the local leaves of the expanding foliation.

\subsection{Proof of Theorem \ref{exponential mixing lemma-theorem}}\label{Exponential Mixing Diffeomorphisms}

We begin by recalling some basic facts about exponential mixing. 

\begin{definition}
Let $f : M \to M$ be a $C^{1+\alpha}$ diffeomorphism on a closed Riemannian
 manifold $M$ and $\mu$ an $f$-invariant Borel probability measure. 
The diffeomorphism $f$ is {\bf exponential mixing} with respect to the measure $\mu$ 
  if there are constants $C>0, \mathbf{r}>0$ and $\eta_{\mathbf{r}}>0$ such that for all $\phi, \psi \in C^{\mathbf{r}}(M)$, 
\begin{align}\label{exponential mixing}
\left|\int_M \phi(x) \psi\left(f^n x\right) \,\rd \mu-\int_M \phi \,\rd \mu \int_M \psi\, \rd \mu\right| \leq C e^{-\eta_{\mathbf{r}} n}\|\phi\|_{\mathbf{r}}\|\psi\|_{\mathbf{r}}, 
\end{align}
where $\|\cdot\|_{\mathbf{r}}$ is the norm on $C^{\mathbf{r}}(M)$. 
\end{definition}

\begin{lemma}[\cite{DKR} Lemma B.1]\label{Lemma 2.1.}
 Suppose that $\mu$ is a smooth measure. If \eqref{exponential mixing} holds for some $\mathbf{r}>0$ then it holds for all $\tilde{\mathbf{r}}>0$ (with possibly a different exponent $\eta_{\tilde{\mathbf{r}}}$ ).
\end{lemma}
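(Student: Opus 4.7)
The plan is a standard smoothing-and-optimize argument. First I would dispose of the easy direction: when $\tilde{\mathbf{r}} \geq \mathbf{r}$, the inclusion $C^{\tilde{\mathbf{r}}}(M) \hookrightarrow C^{\mathbf{r}}(M)$ gives $\|\cdot\|_{\mathbf{r}} \leq C\|\cdot\|_{\tilde{\mathbf{r}}}$ for some constant depending only on $M, \mathbf{r}, \tilde{\mathbf{r}}$, so \eqref{exponential mixing} for $\phi, \psi \in C^{\tilde{\mathbf{r}}}$ is immediate with $\eta_{\tilde{\mathbf{r}}} = \eta_{\mathbf{r}}$. The entire content of the lemma is therefore the case $0 < \tilde{\mathbf{r}} < \mathbf{r}$, which I treat from now on.

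Fix $\phi, \psi \in C^{\tilde{\mathbf{r}}}(M)$ and a scale parameter $\epsilon \in (0,1)$. Using a finite atlas of $M$ with a subordinate partition of unity and convolution in local coordinates against a fixed $C^{\infty}$ mollifier of width $\epsilon$, I produce smooth approximations $\phi_{\epsilon}, \psi_{\epsilon} \in C^{\infty}(M)$ obeying the classical Jackson-type estimates
\[
\|\phi - \phi_\epsilon\|_{\infty} \leq C_1 \epsilon^{\tilde{\mathbf{r}}} \|\phi\|_{\tilde{\mathbf{r}}}, \qquad \|\phi_\epsilon\|_{\mathbf{r}} \leq C_2 \epsilon^{-(\mathbf{r} - \tilde{\mathbf{r}})} \|\phi\|_{\tilde{\mathbf{r}}},
\]
and the analogous pair for $\psi$. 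Then I would telescope the correlation by writing
\[
\phi \cdot (\psi \circ f^{n}) = \phi_\epsilon \cdot (\psi_\epsilon \circ f^{n}) + (\phi - \phi_\epsilon)\cdot (\psi \circ f^{n}) + \phi_\epsilon \cdot ((\psi - \psi_\epsilon)\circ f^{n}),
\]
and analogously expand $\int \phi \,\rd\mu \int \psi\,\rd\mu$ into $\int \phi_\epsilon \,\rd\mu \int \psi_\epsilon\,\rd\mu$ plus two approximation errors. Using that $\mu$ is an $f$-invariant probability, the four approximation errors are each bounded by a product of an $L^{\infty}$-approximation norm and an $L^{\infty}$-norm of the other function, and sum to at most $C_3\,\epsilon^{\tilde{\mathbf{r}}}\,\|\phi\|_{\tilde{\mathbf{r}}} \|\psi\|_{\tilde{\mathbf{r}}}$, while the hypothesis applied to $\phi_\epsilon, \psi_\epsilon$ controls the smoothed correlation by $C_4\, e^{-\eta_{\mathbf{r}} n}\, \epsilon^{-2(\mathbf{r}-\tilde{\mathbf{r}})}\, \|\phi\|_{\tilde{\mathbf{r}}} \|\psi\|_{\tilde{\mathbf{r}}}$.

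The two bounds are then balanced by choosing $\epsilon = e^{-\eta_{\mathbf{r}} n / (2\mathbf{r} - \tilde{\mathbf{r}})}$, which equalizes $\epsilon^{\tilde{\mathbf{r}}}$ and $e^{-\eta_{\mathbf{r}} n}\,\epsilon^{-2(\mathbf{r}-\tilde{\mathbf{r}})}$ and yields \eqref{exponential mixing} for $\tilde{\mathbf{r}}$ with the strictly positive exponent
\[
\eta_{\tilde{\mathbf{r}}} := \frac{\eta_{\mathbf{r}}\,\tilde{\mathbf{r}}}{2\mathbf{r} - \tilde{\mathbf{r}}}.
\]
I expect the main technical point to be the Jackson estimate in the non-integer Hölder regime: one must check that the $C^{\tilde{\mathbf{r}}}$-seminorm (with $\tilde{\mathbf{r}}$ possibly fractional) degrades in a controlled, chart-independent way under mollification on a curved manifold, which requires a careful choice of atlas, kernel, and seminorm. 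The smoothness hypothesis on $\mu$ is only used via uniform $L^\infty$ control of the approximation errors against $\mu$, so a posteriori this assumption can typically be relaxed.
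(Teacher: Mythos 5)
The paper does not prove this lemma but cites it as \cite{DKR} Lemma B.1; your proposal reproduces the standard mollify-telescope-and-optimize argument, which is exactly the one used there. The decomposition, the Jackson/Bernstein estimates $\|\phi-\phi_\epsilon\|_\infty\lesssim \epsilon^{\tilde{\mathbf r}}\|\phi\|_{\tilde{\mathbf r}}$ and $\|\phi_\epsilon\|_{\mathbf r}\lesssim \epsilon^{-(\mathbf r-\tilde{\mathbf r})}\|\phi\|_{\tilde{\mathbf r}}$, and the balancing choice $\epsilon=e^{-\eta_{\mathbf r}n/(2\mathbf r-\tilde{\mathbf r})}$ yielding $\eta_{\tilde{\mathbf r}}=\eta_{\mathbf r}\tilde{\mathbf r}/(2\mathbf r-\tilde{\mathbf r})$ are all correct, and your observation that only $\mu(M)=1$ (not smoothness of $\mu$) is needed to bound the approximation errors is accurate.
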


Throughout this paper, we always use equation \eqref{exponential mixing} for the case $\mathbf{r}=1$.

The following proposition establishes that the combination of exponential mixing and condition {\bf (C2)} yields condition {\bf (C3)}. 
The proof adapts the approximation scheme developed in \cite[Lemma B.2]{DKR}.

\begin{proposition}\label{exponential mixing proposition}
  Let $ M $ be a closed Riemannian manifold and 
  $ f: M \rightarrow M $ a diffeomorphism preserving a smooth measure $\mu$. 
  Suppose that 
  \begin{itemize}
\item $f$ satisfies conditions {\bf (C1)} and {\bf (C2)} ;
\item the continuous foliation $W^{cs}$ has $C^1$ leaves;
\item $f$ is exponential mixing with respect to the smooth measure $\mu$. 
  \end{itemize}
  Then condition {\bf (C3)} holds. 
      \end{proposition}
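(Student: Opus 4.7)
The plan is to deduce (C3) via the classical mixing-plus-bump-function argument, using condition (C2) to control cs-distances and the local product structure of $W^u$ and $W^{cs}$ to turn the approximate intersection produced by mixing into the exact cs-ball intersection that (C3) demands.

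Fix $x,\delta,\epsilon,n$ and a target $y\in M$, and set $r_n := \epsilon g^\epsilon(n)^{-1}$. Applying (C2) at scale $r_n/2$, choose $\rho = \rho_{n,N} := (r_n/2)\, g^{r_n/2}(N)^{-1}$, so that any cs-displacement of size $\leq \rho$ grows to $\leq r_n/2$ after $N$ iterations. Using (C1) together with the $C^1$ regularity of $W^{cs}$, I would construct two smooth cutoffs on $M$: a function $\phi$ supported in the cs-tube $U_\rho := \bigcup_{q\in W^u(x,\delta/2)} W^{cs}(q,\rho)$, with $\int \phi\,d\mu \gtrsim \rho^{\dim E^{cs}}$ and $\|\phi\|_{C^1} \lesssim \rho^{-1}$; and a function $\psi$ supported in the $u$-tube $V_s := \bigcup_{b\in W^{cs}(y,r_n/2)} W^u_{\loc}(b,s)$ for a fixed small $s>0$ independent of $n$, with $\int \psi\,d\mu \gtrsim r_n^{\dim E^{cs}}$ and $\|\psi\|_{C^1}\lesssim 1$. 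By Lemma 6.4 the exponential mixing estimate may be used at $C^1$ regularity, giving
\begin{equation*}
\left|\int \phi\cdot \psi\circ f^N\,d\mu - \int\phi\,d\mu \int\psi\,d\mu\right| \lesssim e^{-\eta N}\rho^{-1}.
\end{equation*}
Since $\rho^{-1}$ and $r_n^{-1}$ grow only subexponentially by (C2), this error is dominated by the product of the integrals once $N = h(n) = O(\log g^\epsilon(n)) = o(n)$, chosen via a short fixed-point argument that absorbs the implicit dependence $\rho = \rho_{n,N}$. Positivity of the correlation then yields $p\in U_\rho$ with $f^N p\in V_s$.

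From this approximate intersection one extracts the exact one as follows. Write $p\in W^{cs}(q,\rho)$ with $q\in W^u(x,\delta/2)$ and $f^N p\in W^u_{\loc}(b',s)$ with $b'\in W^{cs}(y,r_n/2)$. By local product structure, $b'$ is the unique intersection of $W^u(f^N p)$ with $W^{cs}(y)$; so $q^\# := f^{-N}b' \in W^u(p)$, and backward contraction of $f^{-1}$ on $W^u$ gives $d^u(q^\#,p) = O(s\lambda^N)$ for some $\lambda\in(0,1)$. Apply the $cs$-holonomy $H^{cs}\colon W^u(p)\to W^u(q) = W^u(x)$ to obtain $q^* := H^{cs}(q^\#)\in W^u(x)$; its $u$-distance from $q$ is $O(s\lambda^N)$, which is less than $\delta/2$ for $N$ large, so $q^*\in W^u(x,\delta)$. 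Because $q^*$ and $q^\#$ share a $cs$-leaf, so do $f^N q^*$ and $f^N q^\# = b'$, namely $W^{cs}(y)$; the cs-holonomy displacement from $q^\#$ to $q^*$ is of size $O(\rho)$, so (C2) gives $d^{cs}(f^N q^*,b')\leq r_n/2$, and combined with $d^{cs}(b',y)\leq r_n/2$ this places $f^N q^*\in W^{cs}(y,r_n)$, as required.

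The main obstacle is this final approximate-to-exact step: exponential mixing alone only locates a point near the unstable disk in the cs-direction whose image is near the target cs-disk in the $u$-direction, and one needs the local product structure of $W^u$ and $W^{cs}$, the backward contraction on $W^u$, and condition (C2) in concert to rigidify this into the exact intersection $f^N q^*\in W^{cs}(y,r_n)$ with $q^*\in W^u(x,\delta)$. A secondary subtlety is the implicit scale relation $\rho = \rho_{n,N}$ in the mixing inequality, for which the subexponential growth of both $g^\epsilon$ and $g^{r_n/2}$ guaranteed by (C2) must be strong enough that a single $N = o(n)$ works; this is the key quantitative role of (C2).
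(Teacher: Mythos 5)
Your proposal follows the same high-level strategy as the paper: apply the exponential-mixing estimate to Lipschitz bump functions supported on a thickened unstable disk and a thickened $cs$-ball, reduce the regularity from $C^{\mathbf r}$ to Lipschitz via \cite[Lemma B.1]{DKR} (Lemma \ref{Lemma 2.1.} here), choose the widths so that the mixing error is dominated by the product of integrals with $N=h(n)=O(\log g(n))=o(n)$, and conclude a nonempty intersection of tubes. The genuine difference is at the end. The paper builds its thickened sets $\widehat{W}^u$ and $\widehat{W}^{cs}$ as ambient $\frac{\epsilon_0}{3}g^{\epsilon_0}(n)^{-1}$-neighborhoods of truncated leaves, asserts in the preamble that ``intersections of the expanded sets imply intersections of the originals,'' and stops as soon as $\mu\bigl(f^{h(n)}(\widehat{W}^u)\cap\widehat{W}^{cs}\bigr)>0$ is established---the approximate-to-exact reduction is not written out. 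You, by contrast, spell out that reduction: take the unique intersection $b'$ of $W^u_{\loc}(f^Np)$ with $W^{cs}_{\loc}(y)$, pull it back to $q^\#=f^{-N}b'\in W^u(p)$, slide $q^\#$ to $W^u(x)$ by $cs$-holonomy, and invoke (C2) to bound the forward $cs$-drift. This is a worthwhile contribution because it makes visible exactly where local product structure, holonomy control, and (C2) enter.

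That said, your extraction step relies on two inputs that are not available in the stated generality. First, you use uniform backward contraction of $f^{-1}$ along $W^u$ (the factor $\lambda^N$ with $\lambda<1$). Under (C1) this is guaranteed only in the $C^1$ partially hyperbolic alternative; for the $C^{1+\alpha}$ alternative where $W^u$ is merely an expanding foliation in the sense of Definition \ref{def: expanding foliation}, $f^{-1}$ need not contract $W^u$ uniformly. In the context of Theorem \ref{exponential mixing lemma-theorem} this is a mild extra assumption (the examples are partially hyperbolic), but it should be stated. Second, and more seriously, your scale $\rho=\rho_{n,N}=(r_n/2)\,g^{r_n/2}(N)^{-1}$ invokes (C2) at the $n$-dependent level $\epsilon=r_n/2\to 0$. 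Condition (C2) gives, for each \emph{fixed} $\epsilon$, a subexponential $g^\epsilon$; it says nothing uniform as $\epsilon\to 0$, so $g^{r_n/2}(N)$ may blow up for fixed $N$ as $n\to\infty$, and your fixed-point argument choosing $N=o(n)$ does not automatically absorb this. You flag this as a ``subtlety,'' but it is a gap: without extra uniformity in $\epsilon$ the mixing inequality need not dominate. The paper's choice of a single fixed $\epsilon_0$---so that all radii are built from the one function $g=g^{\epsilon_0}$---sidesteps this particular difficulty, at the price of leaving the approximate-to-exact step implicit (where an analogous scale-matching issue would have to be confronted). If you want your version to close cleanly, either restrict to the partially hyperbolic/Lyapunov-stable setting where $g^\epsilon$ is constant in $n$ (so $r_n$ and $\rho$ are $n$-independent), or reformulate the extraction at the single scale $\epsilon_0$ as the paper does and then supply the missing deduction.
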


    \begin{proof}
    We face a challenge not addressed in \cite[Lemma B.2]{DKR}. 
    While that proof demonstrates a nonempty intersection 
    by demonstrating positive intersection volume, 
    in our case, even if $f^{h\left(n\right)}\left(B^u\left(x,\delta\right)\right)$
   intersects $B^{cs}\left(y,g\left(n\right)^{-1}\right)$, their intersection volume may still be $0$. 
   To overcome this difficulty, 
 we slightly expand these sets to ensure positive intersection volume. 
These expansions must be chosen sufficiently small, together with slight truncations of the original sets, in order to guarantee that intersections of the expanded sets imply intersections of the originals.

Fix $x\in M$ and $\delta, \epsilon_0>0$. 
We use $g,h$ as shorthand for functions $g^{\epsilon_0}$ and $h_{x,\delta}^{\epsilon_0}$ 
in conditions {\bf (C2)} and {\bf (C3)}, respectively. 
To prove this lemma, we only need to show that for all $y\in M$,  
    \begin{align}\label{exponential mixing proposition-(1)}
    f^{h\left(n\right)}\left(\widehat{W}^{u}\left(x,\delta\right)\cap \widehat{W}^{cs}\left(y,\epsilon_0 g\left(n\right)^{-1}\right)\right)\neq \varnothing,
    \end{align}
    where
      \begin{align*}
           \widehat{W}^u\left(x,\delta\right)&:=\{y\in M: d\left(y,W^u\left(x,\frac{1}{3}\delta\right)\right)<\frac{\epsilon_0}{3}g\left(n\right)^{-1}\} ;\\
            \widehat{W}^{cs}\left(y,g\left(n\right)^{-1}\right)&:=\{y\in M: d\left(y,W^{cs}\left(y,\frac{\epsilon_0}{3}g\left(n\right)^{-1}\right)\right)<\frac{\epsilon_0}{3}g\left(n\right)^{-1}\}.
      \end{align*}
      We denote by $m_u$ and $m_{cs}$ the dimensions of the 
       foliations $W^{u}$ and $W^{cs}$, respectively. 
       For $\epsilon>0$ and $A\subset M$, we denote by
       $V_\epsilon\left(A\right):=\{x\in M: d\left(x,A\right)<\epsilon\}$ the $\epsilon$-neighborhood of $A$, 
       and  $1_A$ the characteristic function of $A$.   
      
       We fix parameters $r=r(n) $ and $r'=r'(n)$ satisfying $ r(n),r'(n)<g\left(n\right)$; 
       their exact values will be determined later. 
      Let $\phi_{r}$ be a Lipschitz function that is $1$ on $\widehat{W}^u\left(x,\delta\right)$, $0$ outside $V_r\left(\widehat{W}^u\left(x,\delta\right)\right)$, 
      and has Lipschitz norm of order $O\left(1 / r\right)$. 
          Similarly, let $\psi_{r'}$ be a Lipschitz function that is $1$ 
          on $\widehat{W}^{cs}\left(y, g\left(n\right)^{-1}\right)$, 
          $0$ outside $V_{r'}\left(\widehat{W}^{cs}\left(y,g\left(n\right)^{-1}\right)\right)$, 
          and has Lipschitz norm of order $O\left(1 / r'\right)$. 
          By the definitions of $\phi_r$ and $\Psi_{r'}$, we obtain 
          \begin{align*}
            & \|\phi_r\|_{L^2}^2=O\left(g\left(n\right)^{-m_{cs}}\right), \quad \|\psi_{r'}\|=O\left(g\left(n\right)^{-\left(m_u+m_{cs}\right)}\right); \\
          &\left\|\phi_{r}\right\|_{L i p}=O\left(\frac{1}{r}\right),\quad \left\|\psi_{r}\right\|_{L i p}=O\left(\frac{1}{r'}\right);\\
          &\left\|\phi_{r}-1_{\widehat{W}^u\left(x, \delta\right)}\right\|_{L^2}^2=O\left(rg\left(n\right)^{-m_{cs}}+rg\left(n\right)^{-\left(m_{cs}-1\right)}\right)=O\left(rg\left(n\right)^{-\left(m_{cs}-1\right)}\right); \\
          & \left\|\psi_{r'}-1_{\widehat{W}^{cs}\left(y, g\left(n\right)^{-1}\right)}\right\|_{L^2}^2=O\left(g\left(n\right)^{-\left(m_u+m_{cs}-1\right)}r'\right).
          \end{align*}
  Combining the exponential mixing property with above equalities, we obtain
          \begin{align*}
          &\mu\left(f^{h\left(n\right)}\left(\widehat{W}^u\left(x, \delta\right) \right)\cap \widehat{W}^{cs}\left(y, g\left(n\right)^{-1}\right)\right)\\
          =&\mu\left(\phi_{r}\left(\psi_{r'}\circ f^{h\left(n\right)}\right)\right)+ O\left(\|\phi_r\|_{L^2}\left\|\psi_{r'}-1_{\widehat{W}^u\left(x,\delta\right)}\right\|_{L^2}+\left\|\phi_r-1_{\widehat{W}^{cs}\left(y,g\left(n\right)^{-1}\right)}\right\|_{L^2}\|\psi_{r'}\|_{L^2} \right)\\
          =&\mu\left(\phi_{r}\right) \mu\left(\psi^{cs}_{r'}\right)+O\left(\sqrt{r'g\left(n\right)^{-\left(m_u+2m_{cs}-1\right)} }+\sqrt{r g\left(n\right)^{-\left(m_u+2m_{cs}-1\right)} }+r^{-1}r'^{-1}e^{-\eta_1 h\left(n\right)}\right)  \\
         =& \mu\left(\widehat{W}^u\left(x, \delta\right) \right)\mu\left(\widehat{W}^{cs}\left(y, g\left(n\right)^{-1}\right)\right)\\
         &\quad\quad +O\left(\sqrt{r'g\left(n\right)^{-\left(m_u+2m_{cs}-1\right)} }+\sqrt{r g\left(n\right)^{-\left(m_u+2m_{cs}-1\right)} }+r^{-1}r'^{-1}e^{-\eta_1 h\left(n\right)}\right) . 
          \end{align*}
          By choosing $r=r'=e^{-\frac{2}{5}\eta_1 h\left(n\right)}g\left(n\right)^{\frac{1}{5}\left(m^{u}+2m_{cs}-1\right)}$,
         we ensure that any terms in $ O\left(\cdot\right)$ are of the same order, thus we obtain 
          \begin{align*}
            &\mu\left(f^{h\left(n\right)}\left(\widehat{W}^u\left(x, \delta\right) \right)\cap \widehat{W}^{cs}\left(y, g\left(n\right)^{-1}\right)\right)\\
            &=\mu\left(\widehat{W}^u\left(x, \delta\right) \right)\mu\left(\widehat{W}^{cs}\left(y, g\left(n\right)^{-1}\right)\right)+O\left( e^{-\frac{1}{5}\eta_1 h\left(n\right)}g\left(n\right)^{-\frac{2}{5}\left(d^{u}+2m_{cs}-1\right)} \right).
          \end{align*}
        The first term in the right-hand side is of order $O\left(g\left(n\right)^{-\left(m_u+2m_{cs}\right)}\right)$, 
        and it is larger than the last term of the right hand when 
        \[
         h\left(n\right)\geq 3\eta_1^{-1} \left(m_u+2m_{cs}-1\right)\log g\left(n\right).
         \] 
        By defining
        $$
        h\left(n\right):=\max\left\{\lfloor \eta^{-1}\left( m^u+2m^{cs}+4 \right)\log g\left(n\right) \rfloor, \lfloor 10\left(m_u+2m_{cs}-1\right)\eta_1  \log g\left(n\right) \rfloor\right\},
        $$ 
        we conclude that $\lim\limits_{n\to\infty} h\left(n\right)n^{-1}=0$ and $r,r'<g\left(n\right)$ for all sufficiently large $n$. 
      
        Since the product of volumes satisfies 
        $$
        \mu\left(\widehat{W}^u\left(x, \delta\right) \right)\mu\left(\widehat{W}^{cs}\left(y, g\left(n\right)^{-1}\right)\right)>0, 
        $$
       it follows that  
       $$
       \mu\left(f^{h\left(n\right)}\left(\widehat{W}^u\left(x, \delta\right) \right)\cap \widehat{W}^{cs}\left(y, g\left(n\right)^{-1}\right)\right)>0.
       $$
        This establishes \eqref{exponential mixing proposition-(1)}, 
        thereby completing the proof. 
        \end{proof}

We are now ready to prove Theorem \ref{exponential mixing lemma-theorem}. 

\begin{proof}[Proof of Theorem \ref{exponential mixing lemma-theorem}]
Theorem \ref{exponential mixing lemma-theorem} directly follows from proposition \ref{exponential mixing proposition} and Theorem \ref{F-pressure-2-corollary}. 
\end{proof}

Exponential mixing plays a crucial role in the
 study of statistical properties of dynamical systems.
 In \cite{DKR}, Dolgopyat, Kanigowski and Rodriguez Hertz prove that for systems preserving a smooth measure,
 exponential mixing implies Bernoulli. 
Recently, Maldonado extends the work in \cite{DKR}, showing that systems together with the SRB measure,
 exponential mixing  also implies Bernoulli \cite{Mal25}.

\subsection{Proof of Theorem \ref{Katok-existence F-es}}\label{Equilibrium states for Katok map}

In this subsection, we prove Theorem \ref{Katok-existence F-es}. 
We begin by recalling some results on the uniqueness of the equilibrium state for the Katok map.

  \begin{theorem}[\cite{PSZ19} Theorem 3.2, \cite{Wa21} Theorem 1.1]\label{Katok-equilibrium}
  Given the Katok map $G_{\mathbb{T}^2}$, if the H\"older continuous function $\varphi: \mathbb{T}^2\to \mathbb{R}$ and 
  $\varphi(0)<P(\varphi)$, where $0$ is the origin, then there is a unique equilibrium state for $\varphi$. 
  
  For the geometric $t$-potential $\varphi_t=-t\log|DG_{\mathbb{T}^2}|E^u(x)|$, we have 
  \begin{itemize}
          \item for \( t < 1 \), there exists a unique equilibrium measure $\mu_t$;
         
      \item for \( t = 1 \), there exist two equilibrium measures associated to $\varphi_1$, 
      namely, the Dirac measure at the origin $\delta_0$ and the area $m$.
      
      \item for \( t > 1 \), $\delta_0$ is the unique equilibrium measure associated to $\varphi_t$.
  \end{itemize}
  \end{theorem}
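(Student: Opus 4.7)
The plan is to verify conditions \textbf{(C1)}--\textbf{(C3)} for the Katok map $G_{\mathbb{T}^2}$ and then invoke Theorem \ref{F-pressure-2-corollary} directly. Recall that $G_{\mathbb{T}^2}$ is constructed from a linear hyperbolic toral automorphism $A$ of $\mathbb{T}^2$ by a smooth slowdown supported in a small neighborhood of the origin: this yields a $C^{1+\alpha}$ diffeomorphism that preserves Lebesgue measure, is topologically mixing, has nonzero Lyapunov exponents away from the origin, and agrees with $A$ outside the slowdown region. A crucial feature is that the slowdown preserves the linear stable and unstable directions of $A$, so the $W^u$ and $W^s$ laminations of $G_{\mathbb{T}^2}$ coincide with the corresponding linear foliations of $A$ on $\mathbb{T}^2\setminus\{0\}$ and extend continuously across the origin.

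For \textbf{(C1)}, the above discussion shows that the Pesin unstable lamination of $G_{\mathbb{T}^2}$ integrates into a globally defined continuous foliation $W^u$ with $C^{1+\alpha}$ leaves, and likewise the stable lamination integrates to $W^{cs}=W^s$; the expanding property in the sense of Definition \ref{def: expanding foliation} follows from the (non-uniform) hyperbolicity of $G_{\mathbb{T}^2}$ along $W^u$.

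For \textbf{(C2)}, away from the origin $G_{\mathbb{T}^2}$ coincides with $A$ and contracts stable distances uniformly, while near the origin the derivative $DG_{\mathbb{T}^2}$ approaches the identity. Standard escape-time estimates for the Katok map (cf.\ Pesin--Senti--Zhang \cite{PSZ19}) show that an orbit spends only polynomial time in any fixed neighborhood of the origin before returning to the uniformly hyperbolic region. Consequently, the distortion of stable distances under $n$ forward iterates is at most polynomial in $n$, so \textbf{(C2)} holds with $g^{\epsilon}$ of polynomial order, trivially satisfying $n^{-1}\log g^{\epsilon}(n)\to 0$.

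The main obstacle is \textbf{(C3)}: for every $y \in \mathbb{T}^2$, some iterate $f^{h(n)}(\overline{W^u(x,\delta)})$ must intersect $W^{s}(y,\epsilon g^{\epsilon}(n)^{-1})$ with $h(n)/n \to 0$. The strategy is to combine topological mixing of $G_{\mathbb{T}^2}$ with a quantitative stretching estimate: outside the slowdown region, the arc $f^k(\overline{W^u(x,\delta)})$ stretches at the uniform exponential rate of $A$, so after $O(\log(1/\epsilon))$ iterates a uniform fraction of its length lies in the hyperbolic region and becomes $\epsilon$-dense transversally in $\mathbb{T}^2$. Since the transverse resolution $\epsilon g^{\epsilon}(n)^{-1}$ decays only polynomially in $n$, one may take $h(n)=O(\log n)$, which is sublinear. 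The delicate point here is ensuring that the polynomial slowdown near the origin does not spoil the logarithmic bound on $h(n)$; this is again handled by the polynomial escape-time estimates. Once \textbf{(C1)}--\textbf{(C3)} are verified, Theorem \ref{F-pressure-2-corollary} directly yields the conclusion.
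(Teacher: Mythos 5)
Your proposal addresses the wrong theorem. The statement you were asked to prove, Theorem~\ref{Katok-equilibrium}, is explicitly attributed to Pesin--Senti--Zhang \cite{PSZ19} and Wang \cite{Wa21}; the paper does not prove it but imports it wholesale. What you have sketched (verify \textbf{(C1)}--\textbf{(C3)} for $G_{\mathbb{T}^2}$ and invoke Theorem~\ref{F-pressure-2-corollary}) is the proof of the \emph{paper's own} Theorem~\ref{Katok-existence F-es}, which is a genuinely different result.

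The distinction matters because the two theorems assert fundamentally different things, and your method cannot yield the stated conclusion. Theorem~\ref{F-pressure-2-corollary} only produces \emph{existence}: every weak* accumulation point of the averaged pushforwards $\mu_n$ is \emph{an} equilibrium state. It says nothing about uniqueness, nor can it classify the equilibrium states. Theorem~\ref{Katok-equilibrium}, by contrast, asserts \emph{uniqueness} of the equilibrium state for every H\"older $\varphi$ with $\varphi(0)<P(\varphi)$, and a sharp trichotomy for the geometric $t$-potentials: uniqueness for $t<1$, a phase transition at $t=1$ with exactly the two measures $\delta_0$ and $m$, and uniqueness ($=\delta_0$) for $t>1$. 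None of this follows from a construction that only shows limit points are equilibrium states; establishing uniqueness and the $t=1$ transition requires the inducing-scheme and countable-Markov-shift machinery of \cite{PSZ19}, or the non-uniform specification framework of \cite{Wa21}. In the paper the geometric construction is used \emph{in combination with} this known uniqueness result to deduce Corollaries~\ref{Katok-tneq1} and \ref{Katok-t1} (identification of the limit measures), not to reprove it.

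Separately, even as a sketch of the proof of Theorem~\ref{Katok-existence F-es} your argument diverges from the paper's: the paper verifies \textbf{(C2)} by exploiting the topological conjugacy $\phi$ with the linear Anosov map $T$ and takes $g^\epsilon$ to be a \emph{constant} $N_\epsilon$ (not a polynomial), and verifies \textbf{(C3)} via density of unstable leaves together with local product structure, rather than via escape-time estimates near the origin. Your claim of ``$h(n)=O(\log n)$'' and ``polynomial $g^\epsilon$'' is a plausible alternative route but is not what the paper does, and in any case would still only give existence.
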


  The following results are immediate corollaries of the previous theorems. 

  \begin{corollary}\label{Katok-tneq1}
    For the H\"older continuous function $\varphi: \mathbb{T}^2\to \mathbb{R}$ with $\varphi(\underline{0})<P(\varphi)$ 
and the geometric $t$-potential $\varphi_t=-t\log|DG_{\mathbb{T}^2}|E^u(x)|$ with $t\in (-\infty, 1)\cup(1,\infty)$, 
      for any $x\in \mathbb{T}^2$ and $\delta>0$, 
      the constructed measures $\mu_n$ in Theorem \ref{Katok-existence F-es}
      converge to the unique equilibrium states in the weak* topology. 
    \end{corollary}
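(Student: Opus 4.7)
The plan is to combine Theorem \ref{Katok-existence F-es} with the uniqueness results recalled in Theorem \ref{Katok-equilibrium}. First, I would observe that the space $\mathcal{M}(\mathbb{T}^2)$ of Borel probability measures on the compact torus is compact and metrizable in the weak* topology; in particular the sequence $(\mu_n)$ has at least one accumulation point, and convergence of $(\mu_n)$ is equivalent to the assertion that every weak* convergent subsequence has the same limit.

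Next, I would apply Theorem \ref{Katok-existence F-es} (which in turn invokes Theorem \ref{F-pressure-2-corollary}) to conclude that any weak* accumulation point of $(\mu_n)$ is an equilibrium state for the chosen continuous potential. The H\"older continuity of $\varphi$ guarantees continuity, so this case is immediate. For the geometric $t$-potential $\varphi_t = -t \log |DG_{\mathbb{T}^2}|E^u(x)|$, the same applies since $\varphi_t$ is continuous on $\mathbb{T}^2$ under the setup of Theorem \ref{Katok-existence F-es}.

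Then I would invoke Theorem \ref{Katok-equilibrium}: under the hypothesis $\varphi(0) < P(\varphi)$ there is a unique equilibrium state for $\varphi$; for $\varphi_t$ with $t < 1$ the unique equilibrium state is $\mu_t$, while for $t > 1$ the unique one is $\delta_0$. Combining the two ingredients, every accumulation point of $(\mu_n)$ coincides with the corresponding unique equilibrium state, which by the metrizability argument above forces the full sequence $(\mu_n)$ to converge to this measure in the weak* topology.

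This is essentially a formal consequence of existence plus uniqueness, so there is no substantial obstacle. The only point that merits verification is that the restriction $t \neq 1$ is genuinely needed: at $t = 1$ Theorem \ref{Katok-equilibrium} provides two distinct equilibrium states ($\delta_0$ and the area $m$), so uniqueness fails and the subsequence argument above no longer forces convergence of $(\mu_n)$ itself. Excluding $t = 1$ is therefore not a convenience but a necessary hypothesis, and the analysis of what happens at $t = 1$ is precisely the content of the separate statement (Corollary~\ref{Katok-t1}) alluded to earlier in the introduction.
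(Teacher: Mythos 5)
Your proof is correct and is exactly the argument the paper intends: the paper labels this an immediate corollary of Theorem~\ref{Katok-existence F-es} (every weak* accumulation point of $\mu_n$ is an equilibrium state) and Theorem~\ref{Katok-equilibrium} (uniqueness of the equilibrium state under the stated hypotheses), combined with weak* sequential compactness of $\mathcal{M}(\mathbb{T}^2)$. Your closing remark that the exclusion of $t=1$ is necessary, not merely convenient, is also a correct and useful observation that matches the paper's treatment in Corollary~\ref{Katok-t1}.
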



\begin{corollary}\label{Katok-t1}
  For the geometric potential $\varphi=-\log|DG_{\mathbb{T}^2}|E^u(x)|$, 
    for any $x\in \mathbb{T}^2$ and $\delta>0$, 
    the probability measures 
    \begin{align*}
    \mu_{n}:=\frac{1}{n} \sum_{k=0}^{n-1} \left( G_{\mathbb{T}^2}^k \right)_{*} \left( \frac{\lambda_{W^{u}(x,\delta)}}{\int_{W^{u}(x,\delta)} 1 \,\rd \lambda_{W^{u}(x,\delta)(z)}}\right)
    \end{align*}
    converge to the equilibrium states in the weak* topology. 
    Each equilibrium state is a convex combination of $m$ and $\delta_0$. 
  \end{corollary}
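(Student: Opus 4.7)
The plan is to combine the construction already delivered by Theorem~\ref{Katok-existence F-es} with the classification of equilibrium states for the geometric $1$-potential in Theorem~\ref{Katok-equilibrium}. First I would specialize Theorem~\ref{Katok-existence F-es} to the particular continuous potential $\phi := -\log\bigl|\det(DG_{\mathbb{T}^2}|E^u)\bigr| = \Phi^u$. With this choice $S_n(\phi-\Phi^u) \equiv 0$, so the density $d\lambda_n/d\lambda_{W^u(x,\delta)}$ collapses to the constant $1/\lambda_{W^u(x,\delta)}(W^u(x,\delta))$. Hence the $\mu_n$ appearing in Corollary~\ref{Katok-t1} coincide exactly with the time-averaged measures produced by Theorem~\ref{Katok-existence F-es} for this $\phi$, and that theorem guarantees that every weak$^*$ accumulation point of $(\mu_n)$ is an equilibrium state for $\varphi_1 := -\log|DG_{\mathbb{T}^2}|E^u|$.

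Next I would invoke the $t=1$ case of Theorem~\ref{Katok-equilibrium}, which asserts that the \emph{ergodic} equilibrium states for $\varphi_1$ are precisely the Dirac mass $\delta_0$ at the origin and the area $m$. To pass from ergodic equilibrium states to all equilibrium states, I would use the ergodic decomposition $\mu = \int \mu_\tau \, d\hat\mu(\tau)$ of an arbitrary accumulation point $\mu$. Since both $\nu \mapsto h_\nu(G_{\mathbb{T}^2})$ and $\nu \mapsto \int \varphi_1 \, d\nu$ are affine along this decomposition and $\mu$ attains $P_{\operatorname{top}}(G_{\mathbb{T}^2}, \varphi_1)$, the quantity $h_{\mu_\tau}(G_{\mathbb{T}^2}) + \int \varphi_1 \, d\mu_\tau$ must equal $P_{\operatorname{top}}(G_{\mathbb{T}^2}, \varphi_1)$ for $\hat\mu$-almost every $\tau$. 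Each such $\mu_\tau$ is therefore an ergodic equilibrium state, so $\hat\mu$ is supported on $\{\delta_0, m\}$ and $\mu$ is a convex combination of $\delta_0$ and $m$.

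Since both ingredients are already in place, there is no genuine obstacle in this proof. The only point requiring care is that, because the set of equilibrium states for $\varphi_1$ forms a nontrivial simplex with extreme points $\delta_0$ and $m$, one cannot expect the entire sequence $(\mu_n)$ to converge; one can only guarantee that each weak$^*$ limit lies in the described convex hull. The precise coefficient of each limit as a combination of $\delta_0$ and $m$ will in general depend on $x$ and $\delta$, reflecting the open problem flagged after the statement of the corollary.
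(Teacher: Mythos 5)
Your proposal is correct and follows the same route the paper intends: the paper describes Corollary~\ref{Katok-t1} as an ``immediate corollary,'' and its logic is precisely to specialize Theorem~\ref{Katok-existence F-es} to $\phi=\Phi^u$ (so that $S_n(\phi-\Phi^u)\equiv 0$ and the reference measures $\lambda_n$ reduce to normalized Lebesgue on $W^u(x,\delta)$), then invoke the $t=1$ classification from Theorem~\ref{Katok-equilibrium} together with the standard ergodic-decomposition argument to identify the equilibrium states as the convex hull of $\{\delta_0,m\}$. Your caveat that only subsequential convergence is guaranteed is well taken and consistent with the open problem the paper raises immediately afterward about which convex combination arises.
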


 Katok map is a $C^{\infty}$ nonuniformly hyperbolic diffeomorphism 
of the $2$-torus $\mathbb{T}^2$. 
Consider the matrix $A=\begin{pmatrix} 2&1\\1&1\end{pmatrix}$, 
which induces a linear transformation $T$ of the two-dimensional torus 
$\mathbb{T}^2=\mathbb{R}^2/\mathbb{Z}^2$ with the Lebesgue measure $m$. 
The Katok map $G_{\mathbb{T}^2}$ is a slowdown of the linear Anosov map $T$ near the origin, 
and it is a local perturbation. 
We do not provide the explicit definition of the Katok map and 
 refer the reader to \cite{BP23, Ka79} for precise definitions. 
Below, we list the relevant properties of the Katok map.

\begin{proposition}[\cite{BP23,Ka79}]\label{Katok property}
The Katok map has the following properties: 
\begin{itemize}
\item[(1)] It is topologically conjugated to $ T $ via a homeomorphism $ \phi $, $T \circ \phi=\phi \circ G_{\mathbb{T}^2}$. 

\item[(2)] It admits two transverse invariant continuous stable and 
unstable distributions $E^u(x)$ and $E^s(x)$ that integrate to continuous, 
uniformly transverse and invariant foliations $W^u(x)$ and $W^s(x)$ with smooth leaves. 
Moreover, they are the images under the conjugacy map $\phi$ of the stable and unstable foliations for $T$, respectively.

\item[(3)] Almost every $x$ with respect to $m$ has two non-zero Lyapunov exponents, one
 positive in the direction of $E^u(x)$ and the other negative in the direction of $E^s(x)$.
Moreover, the only ergodic invariant measure with zero Lyapunov exponents is the Dirac measure at the origin \( \delta_0 \).

  \item[(4)] It is ergodic with respect to the Lebesgue measure \( m \).
 
\end{itemize}
\end{proposition}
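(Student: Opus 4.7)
The plan is to extract each of the four properties from the explicit construction of Katok's slowdown of the linear Anosov map $T$ near the origin. Recall that $G_{\mathbb{T}^2}$ is defined by choosing a small $T$-invariant disk $D$ around the fixed point $0$ and modifying $T$ inside $D$ via a time change along $T$-orbits, controlled by a profile function that vanishes at $0$ and equals $1$ outside $D$. The resulting map is a $C^\infty$ diffeomorphism that coincides with $T$ on $\mathbb{T}^2\setminus D$ and satisfies $DG_{\mathbb{T}^2}(0)=\mathrm{Id}$.

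For (1), the conjugacy $\phi$ is built at the orbit level: since the modification inside $D$ is an orbit-preserving reparameterization along each $T$-orbit, the identity on $\mathbb{T}^2\setminus D$ extends to a homeomorphism $\phi$ satisfying $T\circ\phi=\phi\circ G_{\mathbb{T}^2}$. For (2), since $T$ carries smooth linear invariant foliations, we define $W^{u/s}(x):=\phi^{-1}(W^{u/s}_T(\phi(x)))$; continuity and uniform transversality follow from continuity of $\phi$, and smoothness of leaves follows from the Hadamard--Perron theorem applied to $G_{\mathbb{T}^2}$ (with nonuniform estimates near $0$). For (4), ergodicity of $(G_{\mathbb{T}^2},m)$ reduces to ergodicity of the linear Anosov $(T,m)$: one verifies that $\phi$ maps $m$-null sets to $m$-null sets, a property built into Katok's construction by requiring the time change to preserve area.

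The main obstacle is (3). The issue is that $DG_{\mathbb{T}^2}(0)=\mathrm{Id}$, so $\delta_0$ itself has both Lyapunov exponents zero, and we must rule out all \emph{other} ergodic measures with a vanishing exponent. The plan is to fix an ergodic $\mu\neq\delta_0$ and a $\mu$-generic point $x$: by Poincar\'e recurrence together with $\mu(\{0\})<1$, the orbit of $x$ spends a positive-density subset of iterates in the uniformly hyperbolic region $\mathbb{T}^2\setminus D$, where $\log\|DG_{\mathbb{T}^2}|E^u\|$ is bounded below by a strictly positive constant; on iterates falling inside $D$, the quantitative slowdown estimate forces $\log\|DG_{\mathbb{T}^2}|E^u\|\ge 0$. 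The Birkhoff ergodic theorem then yields a strictly positive upper Lyapunov exponent, and a symmetric argument on $E^s$ gives a strictly negative one. The delicate point is to verify that the slowdown profile can be chosen so that excursions through $D$ never conspire to cancel the hyperbolic expansion accumulated outside; this is the core quantitative input from \cite{Ka79, BP23} and is what ultimately upgrades nonuniform hyperbolicity off the origin into the dichotomy stated in (3).
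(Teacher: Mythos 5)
This proposition is not proved in the paper at all: it is quoted verbatim from \cite{Ka79,BP23}, so your attempt has to be measured against the arguments in those references. Your outline of (3) is indeed close to the standard one (positive frequency of visits to the uniformly hyperbolic region off the slowdown disk $D$, plus the pointwise bounds $\log\|DG_{\mathbb{T}^2}|E^u\|\geq 0$ on $D$ and $\geq c>0$ off $D$), though the passage from ``$\mu\neq\delta_0$'' to ``positive density of iterates outside $D$'' needs the extra observation that the maximal invariant set inside $D$ is the union of the local stable and unstable manifolds of the origin, which supports no invariant probability other than $\delta_0$.

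The other three items contain genuine gaps. For (1), a conjugacy that is literally the identity on $\mathbb{T}^2\setminus D$ cannot exist: if $y\in D$ is the $G_{\mathbb{T}^2}$-image of a point outside $D$, the relation $T\circ\phi=\phi\circ G_{\mathbb{T}^2}$ forces $\phi(G_{\mathbb{T}^2}^k y)=T^k y$ up to the exit time from $D$, and at the exit moment $\phi$ would again have to be the identity, giving $G_{\mathbb{T}^2}^k y=T^k y$, which fails because the slowdown desynchronizes the two orbits; the conjugacy in \cite{Ka79,BP23} is built globally (semiconjugacy from the homotopy class plus expansivity-type arguments) and does not restrict to the identity off $D$. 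For (2), uniform transversality of $W^u$ and $W^s$ cannot be deduced from continuity of $\phi$: a homeomorphism does not preserve transversality, which is a statement about the continuous invariant distributions $E^u,E^s$ (including at $0$) and is obtained from invariant cone families; only afterwards are the integral foliations identified with the $\phi$-preimages of the linear ones. The most serious gap is (4): transferring ergodicity from $(T,m)$ requires $\phi$ to carry $m$-null sets to $m$-null sets, essentially $\phi_*m=m$, and this is neither ``built into the construction'' nor available --- the area-preserving correction in Katok's construction is a separate coordinate change conjugating the slowed-down map to $G_{\mathbb{T}^2}$ and says nothing about the conjugacy with $T$; were $\phi$ null-set preserving, $(G_{\mathbb{T}^2},m)$ would be measure-theoretically isomorphic to $(T,m)$, a far stronger claim than anything in \cite{Ka79,BP23}. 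The actual proof of ergodicity (in fact of the Bernoulli property) of $m$ proceeds via Pesin theory: nonzero exponents $m$-a.e.\ from (3), absolute continuity of the stable and unstable laminations of $G_{\mathbb{T}^2}$ itself, and a Hopf-type argument.
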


For the Katok map $G_{\mathbb{T}^2}$, 
we denote by $d^u_{G_{\mathbb{T}^2}}$ and $d^s_{G_{\mathbb{T}^2}}$ 
the induced metrics on $W^u, \,W^s$ respectively.
Similarly, for the Anosov diffeomorphism $T$, 
we write $d^u_T, d^s_T$ for the corresponding induced metrics on its unstable and stable foliations.  

Given $\epsilon>0$ with $\epsilon<\delta_0$, we define the constant $\tilde{\epsilon}$ by 
\begin{align*}
\tilde{\epsilon}:=\inf\left\{ d(y,z)\,|\, y,z\in W^s(x),\, x\in\mathbb{T}^2,\, \frac{\epsilon}{2} \leq d^s_{G_{\mathbb{T}^2}}(y,z)\leq \epsilon\right\}. 
\end{align*} 
We claim that $\tilde{\epsilon}>0$ if $\epsilon>0$. 
We proceed by contradiction and assume that $\tilde{\epsilon}=0$ for some $\epsilon>0$. 
By the continuity of foliation $W^s$, there exists a point $y\in \mathbb{T}^2$ 
and a sequence $\{z_i\}_{i\in\mathbb{N}}$ satisfying $z_i\in W^s(y)$ and  $\displaystyle \lim_{i\to\infty}d(y,z_i)\to 0$. 
This contradicts the fact that the set $A=\{z\in W^s(y)\,|\, \frac{\epsilon}{2}\leq d^s_{G_{\mathbb{T}^2}}(y,z)\leq \epsilon\}$ 
is compact and $d(y,z)>0$ for all $z\in A$. 
Therefore, we conclude that $\epsilon>0$ implies $\tilde{\epsilon}>0$. 

Since $\phi:M\to \mathbb{R}$ is a homeomorphism on a compact space $M$, 
there exists functions $ \theta:\mathbb{R}_+\to\mathbb{R}_+ $  
and $ \eta:\mathbb{R}_+\to\mathbb{R}_+ $
such that for any $ x,y\in \mathbb{T}^2$ and $ \epsilon>0$, 
\begin{align*}
d(\phi(x),\phi(y))<\theta(\epsilon)\Rightarrow d(x,y)<\epsilon, \quad d(x,y)<\epsilon\Rightarrow d(\phi(x),\phi(y))<\eta(\epsilon). 
\end{align*}

By Proposition \ref{Katok property} (1) and (2), the local product structure is inherited by $G_{\mathbb{T}^2}$, i.e. 
there exist $\delta_0>0$ and a function $r(\delta)$ such that for any $x,y\in M$,
    \begin{itemize}
\item $W^u(x, \delta)\cap W^s(y, \delta)$ contains at most one point when $0<\delta<\delta_0$; 
\item if $d(x,y)<r(\delta)$, then $W^u(x, \delta)\cap W^s(y, \delta)\neq \varnothing$ for $0<\delta<\delta_0$. 
\end{itemize}

    We are now ready to prove Theorem \ref{Katok-existence F-es}. 

\begin{proof}[Proof of Theorem \ref{Katok-existence F-es}]
By Proposition \ref{Katok property}(2),for any $x\in \mathbb{T}^2$ and $y\in W^u(x)$, 
$\phi(x)$ and $\phi(y)$ lie on the same unstable leaf of the linear Anosov diffeomorphism $T$. 
We obtain 
\begin{align*}
  d(T^n(\phi(x)), T^n(\phi(y)))&=d^u_{T}(T^n(\phi(x)), T^n(\phi(y)))=\left(\frac{3+\sqrt{5}}{2}\right)^n d^u_{T}(\phi(x), \phi(y))\\
  &=\left(\frac{3+\sqrt{5}}{2}\right)^n d(\phi(x), \phi(y)). 
\end{align*}
Fix a small $a>0$. 
Since the homeomorphism $\phi$ maps an curve of length $a$ to a curve of length at least $\theta(a)$, 
we obtain 
\begin{align*}
  d^u_{G_{\mathbb{T}^2}}(G_{\mathbb{T}^2}^n(x), G_{\mathbb{T}^2}^n(y))&
  \geq   d(G_{\mathbb{T}^2}^n(x), G_{\mathbb{T}^2}^n(y)) 
  = d(\phi^{-1} \circ T^n\left(\phi(x)\right), \phi^{-1}\circ T^n \left(\phi(y)\right)) \\
  & \geq \frac{\theta(a)}{2a} d(T^n\left(\phi(x)\right),  T^n \left(\phi(y)\right))
  = \frac{\theta(a)}{2a}  \left(\frac{3+\sqrt{5}}{2}\right)^n d(\phi(x), \phi(y)). 
\end{align*}
Combining this inequality with Proposition \ref{Katok property} (2), 
we conclude that the Katok map satisfies condition {\bf (C1)}.

 Fix $\epsilon>0$. For any $x\in M$ and $y\in W^s(x,\epsilon)$, we have 
  \begin{align*} 
 d(x,y)\leq d^{s}_{G_{\mathbb{T}^2}} (x,y)< \epsilon \Rightarrow  d(\phi(x),\phi(y))\leq \eta(\epsilon). 
  \end{align*}
  There exists an integer $N\in\mathbb{N}$ such that for any $ n\geq N$, 
 \begin{align*} 
  d\left(\phi\left( G_{\mathbb{T}^2}^n(x) \right), \phi\left( G_{\mathbb{T}^2}^n(y) \right)\right)=  d\left(T^n\left( \phi(x) \right), T^n\left( \phi(y) \right)\right)\leq \eta(\epsilon) \left(\frac{3-\sqrt{5}}{2}\right)^n<\theta(\tilde{\epsilon}). 
 \end{align*}
This implies that for any $ n\geq N$, $  d\left( G_{\mathbb{T}^2}^n(x) ,  G_{\mathbb{T}^2}^n(y) \right)<\tilde{\epsilon} $ 
and $d^s_{G_{\mathbb{T}^2}}\left( G_{\mathbb{T}^2}^n(x) ,  G_{\mathbb{T}^2}^n(y) \right)<\epsilon $. 
Moreover, there exists a constant $N_\epsilon $ such that for any $0\leq j\leq N$
$$ 
d^s_{G_{\mathbb{T}^2}}(x,y)<\epsilon N_\epsilon \Rightarrow d^s_{G_{\mathbb{T}^2}}(G_{\mathbb{T}^2}^jx,G_{\mathbb{T}^2}^jy)<\epsilon. 
$$
Taking $g^\epsilon(x)\equiv N_\epsilon$, we conclude that the condition {\bf (C2)} is satisfied.

Fix $x\in M$ and $\delta, \epsilon>0$. 
To verify condition {\bf (C3)}, without loss of generality, 
we assume that 
$\epsilon<\min\left\{\frac{\delta_0}{2},\frac{\delta}{2}\right\}$.
By Proposition \ref{Katok property} (2), 
there exists $m\in\mathbb{N}$ such that for all $n\geq m$, 
\begin{itemize}
\item the set $G_{\mathbb{T}^2}^n(W^u(x,\frac{\delta}{2}))$ is 
$ r\left(\frac{\epsilon}{2}N_\epsilon^{-1}\right)$-dense;
\item for any $z\in G_{\mathbb{T}^2}^n\left(W^u(x,\frac{\delta}{2})\right)$, 
we have $W^u(z,\frac{\epsilon}{2}N_\epsilon^{-1})\subset G_{\mathbb{T}^2}^n\left(W^u(x,\delta)\right)$. 
\end{itemize}
Thus, for any $y\in\mathbb{T}^2$, 
there exists $z \in G_{\mathbb{T}^2}^n\left(W^u(x,\frac{\delta}{2})\right)$ 
satisfying $d(z,y)< r\left(\frac{\epsilon}{2}N_\epsilon^{-1}\right)$. 
Applying the local product structure, we obtain $z_0\in\mathbb{T}^2$ such that 
\begin{align*}
z_0\in W^u\left(z, \frac{\epsilon}{2}N_\epsilon^{-1}\right)\cap W^s\left( y,\frac{\epsilon}{2} N_\epsilon^{-1}\right) \subset G_{\mathbb{T}^2}^n\left( W^u(x,\delta) \right)\cap W^s(y,\epsilon g^\epsilon(n)). 
\end{align*}
This shows that the Katok map satisfies the conditions {\bf (C3)}. 
According to Theorem \ref{F-pressure-2-corollary}, 
we finish the proof of this theorem. 
\end{proof}

\subsection{Proof of Theorem \ref{Almost Anosov-existence F-es}}\label{Almost Anosov}

We begin by recalling the definition of the ``almost Anosov'' $(f,M)$ from \cite{HY95}. 
Let \( M \) be a two-dimensional closed Riemannian manifold, 
and let \( f :M\to M \) be a topologically transitive $C^2$ diffeomorphism satisfying 
the following conditions: 
\begin{itemize}
\item  \( f \) has a fixed point \( p \), i.e., \( f(p) = p \);
\item  There exists a constant \( \kappa^s < 1 \), a continuous function \( \kappa^u \) with
\[
\kappa^u(x) 
\begin{cases} 
= 1, & \text{at } x = p; \\ 
> 1, & \text{elsewhere}, 
\end{cases}
\]
and a decomposition of the tangent space $T_xM = E_x^u \oplus E_x^s$ such that
\[
|Df_x v| \leq \kappa^s |v|, \quad \forall v \in E_x^s,
\]
\[
|Df_x v| \geq \kappa^u(x) |v|, \quad \forall v \in E_x^u,
\]
and
\[
|Df_p v| = |v|, \quad \forall v \in E_p^u.
\]
\end{itemize}

  We introduce some definitions and propositions from \cite{HY95}. 

  \begin{proposition}[\cite{HY95} Lemma 2.1, Proposition 2.2]\label{HY95-Proposition 2.2}
    The maps $x\to E^u_x$ and $x\to E^s_x$ are continuous.  
    Moreover, there exist two continuous foliations \( W^u \) and \( W^s \) on \( M \) tangential to \( E^u \) and \( E^s \) respectively satisfying: 
    \begin{enumerate}
        \item The leaf of \( W^s \) through \( x \), denoted by \( W^s(x) \), is the stable manifold at \( x \), i.e.
        \[
          W^u(x) = \{ y \in M : \exists \,C=C_y\, \,s.t. \, d(f^{n}x, f^{n}y) \leq C(\kappa^s)^n,\, \forall n\geq 0 \}.
          \]
        
        \item The leaf of \( W^u \) through \( x \), denoted by \( W^u(x) \), 
        is the weak unstable manifold at \( x \), i.e.
        \[
        W^u(x) = \{ y \in M : \lim_{n \to \infty} d(f^{-n}x, f^{-n}y) = 0 \}.
        \]
        \item There exist constants \( \beta > 0 \) and \( D > 0 \) such that for all \( x \in M \), 
        if \( W_\beta^u(x) \) is the component of \( W^u(x) \cap \exp_x E_x(\beta) \) containing \( x \), 
        then \( \exp_x^{-1} W_\beta^u(x) \) is the graph of a $C^2$ function \( \phi_x^u : E_x^u(\beta) \to E_x^s(\beta) \) with 
        \( \phi_x^u(0) = 0 \) and \( \|\phi_x^u\|_{C^2} \leq D \). 
        That is, the foliaiton $W^u$ has $C^2$ leaves. 
        The analogous statement holds for \( W_\beta^s(x) \).
    \end{enumerate}
    \end{proposition}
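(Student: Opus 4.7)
The plan is to build everything from a cone field picture and a graph transform, the only subtlety being that expansion degenerates at $p$ so the unstable side does not fit the classical Hadamard--Perron theorem off the shelf. First I would fix continuous cone fields $C^u$ around the candidate bundle $E^u$ and $C^s$ around $E^s$ (in practice, start from the splitting given at each point by the hypothesis and extend to small cones; the defining quadratic forms vary continuously thanks to continuity of $Df$). From $|Df_x v|\leq \kappa^s|v|$ on $E^s$ and $|Df_xv|\geq \kappa^u(x)|v|$ on $E^u$, a standard cone estimate gives $Df_x(C^u_x)\subset \mathrm{int}\,C^u_{f(x)}\cup\{0\}$ and $Df_x^{-1}(C^s_x)\subset \mathrm{int}\,C^s_{f^{-1}(x)}\cup\{0\}$; away from $p$ the inclusion is strict with uniform margin, while at $p$ the margin on the unstable side degenerates but the inclusion still holds because $\kappa^u(p)=1$ exactly on $E^u_p$ and $\kappa^s<1$ strictly.

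Next, continuity of the distributions: writing $E^s_x=\bigcap_{n\geq 0} Df^{-n}_{f^n x}(C^s_{f^n x})$ and $E^u_x=\bigcap_{n\geq 0} Df^n_{f^{-n}x}(C^u_{f^{-n}x})$, each intersection is a nested sequence of continuously varying closed cones whose opening shrinks exponentially. For $E^s$ this is the standard strong-stable argument and yields continuity on all of $M$. For $E^u$ the shrinking of the cone is only a consequence of forward contraction of $C^u$ under $Df^{-1}$; since $\kappa^u(x)>1$ for $x\neq p$, backward orbits of a point $x\neq p$ eventually leave a neighborhood of $p$ and accumulate uniform expansion, so the cones shrink and $E^u_x$ is well defined and continuous off $p$. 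Continuity at $p$ itself follows because $E^u_p$ is the unique $Df_p$-invariant one-dimensional subspace in $C^u_p$ transverse to $E^s_p$, and any limit of $E^u_{x_n}$ with $x_n\to p$ must also be $Df_p$-invariant in $C^u_p$, forcing the limit to be $E^u_p$.

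For the foliations I would use graph transform. On the stable side everything is classical: since $\kappa^s<1$ is uniform, the Hadamard--Perron scheme on local charts produces a $C^2$ graph $\phi^s_x: E^s_x(\beta)\to E^u_x(\beta)$ with $\phi^s_x(0)=0$ and $\|\phi^s_x\|_{C^2}\leq D$, and the resulting leaves $W^s_\beta(x)$ are characterised by the exponential approximation property, proving item (1). For the weak unstable side I would run the graph transform for $f^{-1}$: on a small ball in $E^u_x$, consider $C^2$ graphs over $E^u_x(\beta)$ with $C^2$ norm bounded by $D$, and pull them back by $f^{-1}$. The fibre contraction of this graph transform is governed by $\kappa^u\cdot\kappa^s<\kappa^s<1$ pointwise, and a uniform contraction rate holds away from $p$; for backward orbits that pass near $p$ one uses that visits to a small neighborhood $U_\varepsilon$ of $p$ contribute only factors close to $1$ but do not expand the $C^2$ norm (the distortion $\phi^u$ passes through $p$ with $\phi_p^u(0)=0$ and $D\phi_p^u(0)=0$ thanks to the $Df_p$-invariance of $E^u_p$). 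Combining the uniform contraction outside $U_\varepsilon$ with non-expansion inside $U_\varepsilon$, one shows the graph transform is eventually a contraction on a $C^2$-bounded set; the unique fixed point is $\phi^u_x$, and the standard argument identifies its graph with the weak unstable manifold $\{y:d(f^{-n}x,f^{-n}y)\to 0\}$.

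The main obstacle is exactly this last step: establishing eventual $C^2$-contraction of the unstable graph transform in the presence of the neutral fixed point. I would handle it by an Abraham--Robbin style telescoping argument, splitting each backward orbit into excursions inside $U_\varepsilon$ and arcs outside; outside, one records a definite contraction factor $\rho<1$, while inside one controls the $C^2$ norm by a polynomial-in-time estimate derived from a normal form for $f$ near $p$ (any Taylor expansion of $f$ at $p$ along $E^u_p$ that makes the leading nontrivial coefficient visible suffices, since $|Df_p|_{E^u_p}|=1$ forces the nontrivial dynamics to live in the quadratic term, giving at worst polynomial growth of derivatives). Balancing the polynomial blow-ups inside $U_\varepsilon$ against the exponential contraction outside then closes the argument and gives item (3); item (2) is just a by-product of the identification of the graph as the set of points with $d(f^{-n}x,f^{-n}y)\to 0$.
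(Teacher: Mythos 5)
The paper does not prove this proposition; it is quoted verbatim from Hu--Young \cite{HY95} (Lemma 2.1, Proposition 2.2), so there is no ``paper's own proof'' to compare against. Evaluating your attempt on its own merits: the overall architecture (cone fields for continuity of the splitting, graph transform for the invariant manifolds, $W^s$ by the classical Hadamard--Perron argument) is the right one, and it is essentially the route that \cite{HY95} takes. However, the feature that you identify as the ``main obstacle'' -- the degeneracy $\kappa^u(p)=1$ -- is not actually an obstacle, and the elaborate telescoping/excursion argument you propose to handle it is unnecessary, while the two shortcuts you take on the way contain genuine errors.

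The crucial point you miss is that the splitting is \emph{uniformly dominated}: $\kappa^s<1\leq\kappa^u(x)$ for all $x$, so the domination ratio $\kappa^s/\kappa^u(x)\leq\kappa^s<1$ has a uniform margin that does not degenerate at $p$. This has two consequences. First, the cone $C^u$ is contracted under $Df$ at rate $\leq\kappa^s$ uniformly -- a vector with $|v^s|\leq a|v^u|$ maps to one with $|Dfv^s|/|Dfv^u|\leq\kappa^s a/\kappa^u\leq\kappa^s a$ -- so $\bigcap_n Df^n(C^u_{f^{-n}x})$ shrinks at a uniform geometric rate and $E^u$ is continuous everywhere by the same argument as $E^s$. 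Your claim that cone shrinking requires ``accumulating uniform expansion'' by having backward orbits leave a neighbourhood of $p$ is false as a premise (points on $W^u(p)\setminus\{p\}$ have $f^{-n}x\to p$, so their backward orbits never leave a neighbourhood of $p$) and unnecessary as an argument, and the separate invariance argument for continuity at $p$ is superfluous. Second, the graph transform over $E^u$ with values in $E^s$ (which should be iterated under $f$, not pulled back by $f^{-1}$: iterating $f$ is what flattens the graph, since $f$ contracts $E^s$ and does not contract $E^u$) contracts the $C^k$ norm at rate $\kappa^s/(\kappa^u)^k\leq\kappa^s<1$ for $k=0,1,2$, again uniformly. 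The $r$-domination (bunching) condition $\kappa^s<(\kappa^u)^r$ thus holds with a margin bounded away from $1$ for every $r$, so the standard $C^r$ section theorem applies without modification. No decomposition of backward orbits into excursions through $U_\varepsilon$ is needed, and no control of a ``polynomial blow-up near $p$'' is required. Incidentally the inequality you write for the fibre contraction, $\kappa^u\cdot\kappa^s<\kappa^s$, is false (it reads $\geq$, not $<$, because $\kappa^u\geq1$); the correct quantity is $\kappa^s/\kappa^u\leq\kappa^s<1$. The genuinely delicate part of the weak-unstable side -- which you gloss over as a ``by-product'' -- is identifying the plaque family produced by the graph transform with the topological weak-unstable set $\{y:d(f^{-n}x,f^{-n}y)\to0\}$ and verifying that the plaques assemble into a foliation; there the lack of expansion at $p$ does cause the backward convergence to be subexponential, and this is where one actually has to work.
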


  After Proposition 2.5 in \cite{HY95}, it is shown that the system $(f, M)$ has a local product structure,  i.e., 
    there exist $\delta_0>0$ and a function $r(\delta)$ such that for any $x,y\in M$,
    \begin{itemize}
\item $W^u_\delta(x)\cap W^s_\delta(y)$ 
contains at most one point when $0<\delta<\delta_0$; 
\item if $d(x,y)<r(\delta)$, then $W^u_\delta(x)\cap W^s_\delta(y)\neq \varnothing$ for $0<\delta<\delta_0$. 
\end{itemize}


    \begin{proposition}[\cite{HY95} Proposition 2.6]\label{HY95-Proposition 2.6}
  $W^u(p)$ and $W^s(p)$ are both dense in $M$. 
         \end{proposition}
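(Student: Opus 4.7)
The plan is to combine the topological transitivity of $f$ with the local product structure recalled just before the statement. The key dynamical observation is that while $f$ is only non-uniformly expanding along $W^u$, it still contracts $W^s$ uniformly with rate $\kappa^s<1$, and the bound $|Df_xv|\geq\kappa^u(x)|v|\geq|v|$ for $v\in E^u_x$ implies that $f^{-1}$ is non-expanding along $W^u$-leaves (an isometry at $p$, a strict contraction elsewhere). I will establish the density of $W^u(p)$ by a forward-iteration argument and the density of $W^s(p)$ by the symmetric backward-iteration argument. Throughout, $d^u$ and $d^s$ denote the intrinsic distances along the leaves of $W^u$ and $W^s$.

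For the density of $W^u(p)$, fix $y\in M$ and $\e>0$, and choose $\delta<\min\{\delta_0,\e/2\}$. Applying topological transitivity to the non-empty open sets $U:=B(p,r(\delta))$ and $V:=B(y,\e/2)$ yields some $n\geq 1$ and a point $z\in U$ with $f^nz\in V$. The local product structure then produces a unique intersection point $q\in W^u_\delta(p)\cap W^s_\delta(z)$, so in particular $q\in W^u(p)$ and $d^s(q,z)\leq\delta$. Because $p$ is fixed, $f^nq\in W^u(p)$; uniform stable contraction gives $d(f^nq,f^nz)\leq d^s(f^nq,f^nz)\leq(\kappa^s)^n\delta\leq\delta$, and hence $d(f^nq,y)\leq\delta+\e/2<\e$.

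For the density of $W^s(p)$, with the same $\delta$, I instead apply transitivity to $U':=B(y,\e/2)$ and $V':=B(p,r(\delta))$ to find $z\in U'$ with $f^nz\in V'$. The local product structure now produces $q\in W^s_\delta(p)\cap W^u_\delta(f^nz)$, and invariance yields $f^{-n}q\in W^s(p)$. The non-expansion of $f^{-1}$ along $W^u$-leaves gives $d^u(f^{-n}q,z)\leq d^u(q,f^nz)\leq\delta$, hence $d(f^{-n}q,y)\leq\delta+\e/2<\e$, as required.

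The main obstacle will be justifying the intrinsic non-expansion estimate $d^u(f^{-1}a,f^{-1}b)\leq d^u(a,b)$ from the pointwise bound $|Df_xv|\geq|v|$ on $E^u_x$. This requires integrating the derivative bound along $W^u$-leaves, which is legitimate given the $C^2$-regularity of those leaves established in Proposition~\ref{HY95-Proposition 2.2}(3) together with the continuity of $E^u$. Once this step is in hand, the rest is the straightforward combination of transitivity and the local product structure, with the initial choice $\delta<\e/2$ absorbing all error terms coming from the ambient metric.
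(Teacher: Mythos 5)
The paper does not give a proof of this proposition: it is quoted verbatim from Hu--Young \cite{HY95} and used as a black box, so there is no ``paper's own proof'' to compare against. That said, your argument is correct and is the standard way to derive density of $W^u(p)$ and $W^s(p)$ from topological transitivity together with the local product structure recalled just above the statement. Each step is justified by hypotheses that are explicitly available in this setting: transitivity is assumed in the definition of an almost Anosov diffeomorphism; the local product structure $W^u_\delta(x)\cap W^s_\delta(y)\neq\varnothing$ whenever $d(x,y)<r(\delta)$ is stated after Proposition~2.5; uniform stable contraction with rate $\kappa^s<1$ gives $d^s(f^nq,f^nz)\le(\kappa^s)^n d^s(q,z)$; and the inequality $|Df_xv|\ge\kappa^u(x)|v|\ge|v|$ on $E^u_x$, integrated along the $C^2$ unstable leaves from Proposition~\ref{HY95-Proposition 2.2}(3), gives the non-expansion $d^u(f^{-1}a,f^{-1}b)\le d^u(a,b)$ that you correctly flag as the only step needing care. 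The final triangle inequalities, together with $d\le d^s$ and $d\le d^u$ for points on a common leaf, close both halves of the argument. So the proposal is a sound self-contained proof of the cited fact.
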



         First, we consider the properties of weak unstable leaves of points near the point $p$.

         \begin{lemma}\label{almost dense-1}
          Given $\alpha>0$,  
        there exists an integer $N=N(\alpha)$ such that 
        for any $x\in M$ with  $d(x,p)<\frac{1}{2}   r\left(\frac{\alpha}{2}\right)$, $y\in M$ and $n\geq N$, 
        \begin{align*}
          f^n(W^u(x,\alpha))\cap W^s(y,\alpha)\neq \varnothing. 
        \end{align*}
        \end{lemma}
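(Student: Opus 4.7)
The plan is to use the local product structure to find a shadow point $z$ that is driven into $p$ by forward iterates, then to exploit the density and expansion on $W^u(p)$ to push a large dense arc into $f^n(W^u(x,\alpha))$, and finally to apply the local product structure a second time while keeping enough intrinsic slack to remain inside $f^n(W^u(x,\alpha))$. Concretely, since $d(x,p) < \frac{1}{2} r(\alpha/2) < r(\alpha/2)$, the local product structure (with $\delta = \alpha/2 < \delta_0$) produces a unique point $z \in W^u(x,\alpha/2) \cap W^s(p,\alpha/2)$. The triangle inequality in the intrinsic $d^u$-metric gives $W^u(z,\alpha/2) \subset W^u(x,\alpha)$, and since $|Df|_{E^u} \geq 1$ pointwise, we obtain $W^u(f^n z, \alpha/2) \subset f^n(W^u(x,\alpha))$ for every $n \geq 0$. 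From $z \in W^s(p,\alpha/2)$ and Proposition \ref{HY95-Proposition 2.2}(1), $d(f^n z, p) \leq C(\kappa^s)^n \cdot \alpha/2$ with $C$ independent of $x$, so $f^n z \to p$ at a rate depending only on $\alpha$.

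Invoking Proposition \ref{HY95-Proposition 2.6} (density of $W^u(p)$ in $M$), I pick $R_0 = R_0(\alpha)$ such that $W^u(p, R_0)$ is $r(\alpha/2)/4$-dense, and then $m_0 = m_0(\alpha)$ such that $f^{m_0}(W^u(p,\alpha/2)) \supset W^u(p, R_0 + 2\alpha)$. This is possible because $f(W^u(p)) = W^u(p)$ (from the characterization in Proposition \ref{HY95-Proposition 2.2}(2)) and $\kappa^u > 1$ off $p$, so the orientation-preserving $1$-dimensional dynamics on $W^u(p)$ expand strictly away from $p$, making $d^u(p, f^n q) \to \infty$ for every $q \in W^u(p) \setminus \{p\}$. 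Now comes the heart of the argument, a continuity step at $p$: for $w$ near $p$, let $\gamma_w : [-\alpha/2, \alpha/2] \to M$ be the arclength parametrization of $W^u(w,\alpha/2)$ with $\gamma_w(0) = w$. By the $C^2$ graph description in Proposition \ref{HY95-Proposition 2.2}(3) and the $C^2$-smoothness of $f^{m_0}$, the maps $\eta_w := f^{m_0} \circ \gamma_w$ and their derivatives depend continuously on $w$ in the uniform norm. Reparametrizing $\eta_w$ by arclength in the image gives $\tilde\eta_w : [-L_w^-, L_w^+] \to M$, and as $w \to p$ one has $L_w^\pm \to L_p^\pm \geq R_0 + 2\alpha$ together with $\tilde\eta_w \to \tilde\eta_p$ uniformly on $[-R_0, R_0]$. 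Combining this with the decay of $d(f^n z, p)$ furnishes $N_1 = N_1(\alpha) \in \N$ such that for all $n \geq N_1$ the arc $\tilde\eta_{f^n z}([-R_0, R_0])$ is $r(\alpha/2)/2$-dense in $M$ and leaves at least $\alpha$ of intrinsic slack on each side inside $f^{m_0}(W^u(f^n z, \alpha/2))$.

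Setting $N(\alpha) := N_1(\alpha) + m_0(\alpha)$, for any $n' \geq N(\alpha)$ I write $n' = n + m_0$ with $n \geq N_1$ and fix $y \in M$: density provides $w \in \tilde\eta_{f^n z}([-R_0, R_0]) \subset f^{n'}(W^u(x,\alpha))$ with $d(w, y) < r(\alpha/2)$, so the local product structure produces $w^* \in W^u(w, \alpha/2) \cap W^s(y, \alpha/2)$ with $d^u(w, w^*) < \alpha/2$; the $\alpha$-slack then forces $W^u(w,\alpha/2) \subset f^{m_0}(W^u(f^n z, \alpha/2)) \subset f^{n'}(W^u(x,\alpha))$, whence $w^* \in f^{n'}(W^u(x,\alpha)) \cap W^s(y, \alpha)$, as desired. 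The main difficulty is the continuity/reparametrization step: verifying that $\tilde\eta_w$ converges uniformly on the fixed interval $[-R_0, R_0]$ as $w \to p$ (which requires passing from $C^1$-continuity of the foliation and $f^{m_0}$ to uniform $C^0$ convergence of arclength parametrizations), and organizing the length/slack estimates so that both $N_1$ and $m_0$ depend only on $\alpha$ and not on the particular $x$.
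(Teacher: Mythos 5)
Your proof is correct, but it follows a genuinely different route than the paper's, which is shorter. Both arguments use the density of $W^u(p)$ (Proposition~\ref{HY95-Proposition 2.6}) and the local product structure, but the paper runs the construction ``backward--forward'' while you run it entirely forward. The paper observes that for $n$ large, $f^n(W^u(p,\delta))$ is $r(\delta)$-dense; given $y$, it picks $z\in f^n(W^u(p,\delta))$ near $y$, gets $w\in W^u(z,\delta)\cap W^s(y,\delta)$, and then uses the fact that $Df^{-1}$ is \emph{nonexpanding} along $E^u$ to conclude $f^{-n}w\in W^u(p,2\delta)\subset B\bigl(p,\tfrac12 r(\alpha/2)\bigr)$; a second application of the local product structure at $x$ produces $v\in W^s(f^{-n}w,\alpha/2)\cap W^u(x,\alpha/2)$, and $f^n v$ is the desired intersection point. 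That the pullback stays inside a fixed neighborhood of $p$ independently of $n$ is exactly what lets the paper avoid all the continuity and reparametrization machinery you invoke. Your forward version --- project $x$ onto $W^s(p)$ to get $z$, watch $f^nz\to p$, and use $C^1$-dependence of the $W^u$-graphs on the base point (from the uniform $C^2$ bounds in Proposition~\ref{HY95-Proposition 2.2}(3) plus an Arzel\`a--Ascoli argument) together with $C^2$-smoothness of $f^{m_0}$ to obtain uniform convergence of the arclength-parametrized arcs $\tilde\eta_{f^nz}\to\tilde\eta_p$ on a fixed window --- does go through, and the uniformity in $x$ you flag as a concern holds because $z$, $R_0$, $m_0$, and the contraction rate along $W^s$ all depend only on $\alpha$. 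The tradeoff is that your route requires a genuinely technical step (passing from $C^1$ convergence of unparametrized arcs to uniform convergence of arclength reparametrizations on a growing interval) that the paper's pullback idea sidesteps entirely, at the cost of being perhaps less transparent about how the arc of $W^u(x,\alpha)$ sweeps through $M$ as it passes close to $p$.
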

          
        \begin{proof}
          We set $\delta=\min\{\delta_0, \frac{\alpha}{2}, \frac{1}{4}   r\left(\frac{\alpha}{2}\right)\}$. 
      By Proposition \ref{HY95-Proposition 2.6}, 
      there exists an integer $N=N(\alpha)$ such that $f^n\left(W^u(p,\delta)\right)$ is $r(\delta)$-dense for any $n\geq N$, i.e., 
      \begin{align*}
      M=\bigcup_{z\in f^{n}\left(W^u(p,\delta)\right)} B(z,r(\delta)). 
      \end{align*}
      For any $y\in M$, there exists $z\in f^{n}\left(W^u(p,\delta)\right)$ with $d(z,y)<r(\delta)$. 
      Using the local product structure, there exists $ w\in W^u(z,\delta) \cap W^s(y,\delta)$. 
     Then we have 
     \begin{align*}
     f^{-n}(w)\in  W^u(f^{-n}(z),\delta)\subset W^u(p,2\delta)\subset W^u\left(p,\frac{1}{2}   r\left(\frac{\alpha}{2}\right)\right). 
     \end{align*} 
     For any $x\in M$ with $d(x,p)<\frac{1}{2}   r\left(\frac{\alpha}{2}\right)$, 
     applying the triangle inequality, it follows that  
     \begin{align*}
      d(f^{-n}(w),x)\leq d(f^{-n}(w),p)+d(p,x)<\frac{1}{2}   r\left(\frac{\alpha}{2}\right)+\frac{1}{2}   r\left(\frac{\alpha}{2}\right)=   r\left(\frac{\alpha}{2}\right). 
     \end{align*}
     By local product structure, there exists $v\in W^{s}(f^{-n}(w),\frac{\alpha}{2})\cap W^u(x,\frac{\alpha}{2})$. 
     Thus,  
     \begin{align*}
     f^n(v)\in f^n\left(W^u\left(x,\frac{\alpha}{2}\right)\right)\cap W^s\left(w,\frac{\alpha}{2}\right) \subset f^n\left(W^u(x,\alpha)\right)\cap W^s(y,\alpha). 
     \end{align*}
     This finishes the proof.
        \end{proof}

We extend the previous lemma to arbitrary points in $M$. 

\begin{lemma}\label{almost dense-2}
  Given $x\in M$ and $\delta>0$, 
  there exists an integer $N_0=N_0(x,\delta)>0$ such that for any $y\in M$ and $n\geq N_0$, 
  \begin{align*}
  f^{n}\left(W^{u}(x,\delta)\right)\cap W^{s}(y,\delta)\neq \varnothing.
  \end{align*}  
\end{lemma}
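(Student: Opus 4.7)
The plan is to reduce Lemma \ref{almost dense-2} to Lemma \ref{almost dense-1}, which already handles points close to the fixed point $p$. The idea is that after enough forward iterates, the set $f^n(W^u(x,\delta))$ should contain a full weak unstable ball of some fixed radius $\alpha$ around a point that is itself close to $p$; from that moment on, Lemma \ref{almost dense-1} gives the required intersection with $W^s(y,\alpha)$ in a uniformly bounded number of additional iterates, independently of $y$.

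First, I will fix a small auxiliary scale $\alpha<\min\{\delta/2,\delta_0\}$ for which Lemma \ref{almost dense-1} yields an integer $N(\alpha)$; taking $\alpha=\delta/8$ suffices. Using the density of $W^s(p)$ from Proposition \ref{HY95-Proposition 2.6}, I pick $z\in W^s(p)$ with $d(x,z)<r(\alpha)$ and apply the local product structure to get $w\in W^u(x,\alpha)\cap W^s(z,\alpha)$. Since $w\in W^s(z)=W^s(p)$ one has $f^n(w)\to p$, so there exists $N_1$ with $d(f^{N_1}(w),p)<\tfrac12 r(\alpha/2)$, which is the range in which Lemma \ref{almost dense-1} applies at the center $x':=f^{N_1}(w)$.

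Second, I need to show that $f^{N_1}(W^u(x,\delta))$ contains $W^u(x',\alpha)$. Since $w\in W^u(x,\alpha)$, one has $W^u(w,\alpha)\subset W^u(x,2\alpha)\subset W^u(x,\delta)$. The definition of almost Anosov gives $|Df_x v|\geq \kappa^u(x)|v|\geq |v|$ on $E^u_x$, so $f^{-1}$ is non-expanding in the intrinsic metric along the weak unstable leaves. Consequently, for every $n\geq 0$,
$$
W^u\!\bigl(f^n(w),\alpha\bigr)\subset f^n\bigl(W^u(w,\alpha)\bigr)\subset f^n\bigl(W^u(x,\delta)\bigr),
$$
and in particular the weak unstable $\alpha$-disk around $x'$ sits inside $f^{N_1}(W^u(x,\delta))$.

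Finally, for any $k\geq N_0:=N_1+N(\alpha)$ and any $y\in M$, Lemma \ref{almost dense-1} applied to $x'$ with iterate count $k-N_1\geq N(\alpha)$ produces a point in $f^{k-N_1}(W^u(x',\alpha))\cap W^s(y,\alpha)$, which by the previous inclusion lies in $f^k(W^u(x,\delta))\cap W^s(y,\delta)$. The main obstacle is Step 2: a priori, the image $f^n(W^u(x,\delta))$ could become geometrically distorted and its weak unstable radius around $f^n(w)$ could shrink, making the reduction to Lemma \ref{almost dense-1} fail. The almost Anosov bound $\kappa^u\geq 1$ — with equality only at $p$ — is precisely what prevents this shrinkage and makes the whole strategy work.
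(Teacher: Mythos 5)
Your proposal is correct and follows essentially the same route as the paper's proof: in both arguments one uses density of $W^s(p)$ and the local product structure to find a point $w$ (the paper calls it $x_0$) in $W^u(x,\cdot)\cap W^s(p)$, iterates forward until $f^q(w)$ is close to $p$, invokes the monotone (non-contracting) action of $f$ on $W^u$-leaves to show $f^q(W^u(x,\delta))$ still contains a fixed-radius unstable disk around $f^q(w)$ (this is precisely the paper's display \eqref{almost dense-2-(1)}), and then applies Lemma \ref{almost dense-1}. The only differences are cosmetic: you work at the auxiliary scale $\alpha=\delta/8$ rather than the paper's $\delta/2$, and you spell out why $\kappa^u\geq 1$ gives the needed inclusion, whereas the paper asserts it without comment.
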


\begin{proof}
  Fix $x \in M$ and $\delta>0$. 
  By Proposition \ref{HY95-Proposition 2.6}, 
   the stable leaf $W^s(p)$ is dense. Therefore, 
  there exists $N = N(\delta) \in \mathbb{N}$ such that for all $n \geq N$, 
  the set $f^{-n}\left(W^s(p,\frac{\delta}{2})\right)$ is $r(\frac{\delta}{2})$-dense. 
   For each $n$, there exists $z \in f^{-n}\left(W^s\left(p,\frac{\delta}{2}\right)\right)$ 
   satisfying $d(x, z) < r\left(\frac{\delta}{2}\right)$. 
   Applying local product structure, 
   there exists $x_0\in W^u\left(x,\frac{\delta}{2}\right)\cap W^s\left(z,\frac{\delta}{2}\right)$. 
  
Since $x_0\in W^s(z,\frac{\delta}{2})\subset W^s(p)$, 
there exists $N_1\in\mathbb{N}$ such that 
$d(f^q(x_0),p)<\frac{1}{2}r(\frac{\delta}{2})$ for all $q\geq N_1$.
For each $q$, we have  
\begin{align}\label{almost dense-2-(1)}
f^{q}\left(W^u(x,\delta)\right)\supset f^{q}\left(W^u(x_0,\frac{\delta}{2})\right) \supset W^u\left(f^q(x_0), \frac{\delta}{2}\right). 
\end{align} 
 By Lemma \ref{almost dense-1}, 
 there exists an integer $N_2=N_2(\delta)$, such that 
  for any $z\in M$ with $d(z,p)<\frac{1}{2}r(\frac{\delta}{2})$, $y\in M$ and $n\geq N_2$, 
  \begin{align*}
  f^{n}\left( W^u\left(z, \frac{\delta}{2}\right) \right)\cap W^s\left(y,\frac{\delta}{2} \right)\neq \varnothing. 
  \end{align*}
 Combining this with the inequality \eqref{almost dense-2-(1)}, 
  we conclude that for any $q\geq N_1$ and $n\geq N_2$, 
\begin{align*}
f^{n+q}\left(W^u(x,\delta)\right)\cap W^s(y,\delta) \supset f^{n}\left( W^u\left(f^q(x_0), \frac{\delta}{2}\right) \right)\cap W^s\left(y,\frac{\delta}{2}\right)  \neq \varnothing. 
\end{align*}
Setting $N_0=N_1+N_2$, we complete the proof of this lemma. 
\end{proof}

    We are now ready to prove Theorem \ref{Almost Anosov-existence F-es}. 

\begin{proof}[Proof of Theorem \ref{Almost Anosov-existence F-es}]
By definition of the dynamical system $(f,M)$, 
 the weak unstable foliation is an expanding foliation, 
and conditions {\bf (C1)} and {\bf (C2)} are satisfied. 
Furthermore, 
since $f$ is uniformly contract along the stable foliation, 
Lemma \ref{almost dense-2} implies that condition {\bf (C3)} holds as well. 
Consequently, this theorem follows from 
Theorem \ref{F-pressure-2-corollary}. 
\end{proof}

\end{document}